  \pgfplotsset{compat = 1.17}
  \tikzset{external/system call = {%
    pdflatex \tikzexternalcheckshellescape
      -halt-on-error
      -interaction=batchmode
      -jobname "\image" "\texsource"}}
\newcommand{%
  \tikzexternalenable%
  \tikzsetnextfilename{}%
  \input{graphics/.tikz}%
  \tikzexternaldisable%
}[1]{%
  \tikzexternalenable%
  \tikzsetnextfilename{#1}%
  \input{graphics/#1.tikz}%
  \tikzexternaldisable%
}
\crefname{algocf}{alg.}{algs.}
\Crefname{algocf}{Algorithm}{Algorithms}
\active \gdef@{\mkern1mu}}
\newcommand{\ds}{\ensuremath{\mathrm d}}
\newcommand{\trans}{\ensuremath{\mkern-1.5mu\mathsf{T}}}
\newcommand*{\defeq}{\ensuremath{\stackrel{\mathsf{def}}{=}}}
\newcommand{\frob}{\ensuremath{\mathsf{F}}}
\DeclareMathOperator{\vecm}{vec}
\DeclareMathOperator{\trace}{tr}
\DeclareMathOperator{\real}{Re}
\DeclareMathOperator{\imag}{Im}
\DeclareMathOperator{\diag}{diag}
\DeclareMathOperator{\Res}{Res}
\DeclareMathOperator{\range}{Range}
\DeclareMathOperator{\sinc}{sinc}
\DeclareMathOperator{\expec}{Exp}
\DeclareMathOperator{\var}{Var}
\newcommand{\imunit}{{\ensuremath{\dot{\imath\hspace*{-0.2em}\imath}}}}
\def\Sys{\ensuremath{\mathcal{G}}}
\def\Syshat{\ensuremath{\check{\Sys}}}
\def\Sysred{\ensuremath{\wt\Sys}}
\def\Syscheck{\ensuremath{\check\Sys}}
\def\contourRone{\ensuremath{\Gamma_{R_1}}}
\def\contourRtwo{\ensuremath{\Gamma_{R_2}}}
\def\contourRi{\ensuremath{\Gamma_{R_i}}}
\def\wt#1{\ensuremath{\widetilde{#1}}}
\newcommand\R{\ensuremath{\mathbb{R}}}
\newcommand\C{\ensuremath{\mathbb{C}}}
\newcommand\Cm{\ensuremath{\C^{m}}}
\newcommand\Cp{\ensuremath{\C^{p}}}
\newcommand\Cn{\ensuremath{\C^{n}}}
\newcommand\Cr{\ensuremath{\C^{r}}}
\newcommand\Crr{\ensuremath{\C^{r\times r}}}
\newcommand\Cnr{\ensuremath{\C^{n\times r}}}
\newcommand\Cpm{\ensuremath{\C^{p\times m}}}
\newcommand\Rn{\ensuremath{\R^{n}}}
\newcommand\Rm{\ensuremath{\R^{m}}}
\newcommand\Rp{\ensuremath{\R^{p}}}
\newcommand\Rr{\ensuremath{\R^{r}}}
\newcommand\Rnn{\ensuremath{\R^{n\times n}}}
\newcommand\Rnm{\ensuremath{\R^{n\times m}}}
\newcommand\Rpn{\ensuremath{\R^{p\times n}}}
\newcommand\Rpm{\ensuremath{\R^{p\times m}}}
\newcommand\Rrr{\ensuremath{\R^{r\times r}}}
\newcommand\Rpr{\ensuremath{\R^{p\times r}}}
\newcommand\Rrm{\ensuremath{\R^{r\times m}}}
\newcommand\Rnr{\ensuremath{\R^{n\times r}}}
\newcommand{\BA}{\ensuremath{\boldsymbol{A}}}
\newcommand{\BB}{\ensuremath{\boldsymbol{B}}}
\newcommand{\BC}{\ensuremath{\boldsymbol{C}}}
\newcommand{\BD}{\ensuremath{\boldsymbol{D}}}
\newcommand{\BE}{\ensuremath{\boldsymbol{E}}}
\newcommand{\BG}{\ensuremath{\boldsymbol{G}}}
\newcommand{\BH}{\ensuremath{\boldsymbol{H}}}
\newcommand{\BI}{\ensuremath{\boldsymbol{I}}}
\newcommand{\BK}{\ensuremath{\boldsymbol{K}}}
\newcommand{\BM}{\ensuremath{\boldsymbol{M}}}
\newcommand{\BN}{\ensuremath{\boldsymbol{N}}}
\newcommand{\BS}{\ensuremath{\boldsymbol{S}}}
\newcommand{\BT}{\ensuremath{\boldsymbol{T}}}
\newcommand{\BU}{\ensuremath{\boldsymbol{U}}}
\newcommand{\BV}{\ensuremath{\boldsymbol{V}}}
\newcommand{\BW}{\ensuremath{\boldsymbol{W}}}
\newcommand{\BX}{\ensuremath{\boldsymbol{X}}}
\newcommand{\BY}{\ensuremath{\boldsymbol{Y}}}
\newcommand{\Bb}{\ensuremath{\boldsymbol{b}}}
\newcommand{\Bc}{\ensuremath{\boldsymbol{c}}}
\newcommand{\Bg}{\ensuremath{\boldsymbol{g}}}
\newcommand{\Bl}{\ensuremath{\boldsymbol{\ell}}}
\newcommand{\Bm}{\ensuremath{\boldsymbol{m}}}
\newcommand{\Bq}{\ensuremath{\boldsymbol{q}}}
\newcommand{\Br}{\ensuremath{\boldsymbol{r}}}
\newcommand{\Bs}{\ensuremath{\boldsymbol{s}}}
\newcommand{\Bt}{\ensuremath{\boldsymbol{t}}}
\newcommand{\Bu}{\ensuremath{\boldsymbol{u}}}
\newcommand{\Bv}{\ensuremath{\boldsymbol{v}}}
\newcommand{\Bw}{\ensuremath{\boldsymbol{w}}}
\newcommand{\Bx}{\ensuremath{\boldsymbol{x}}}
\newcommand{\By}{\ensuremath{\boldsymbol{y}}}
\newcommand{\Bz}{\ensuremath{\boldsymbol{z}}}
\newcommand{\CH}{\ensuremath{\mathcal{H}}}
\newcommand{\CL}{\ensuremath{\mathcal{L}}}
\newcommand{\bxi}{\ensuremath{\boldsymbol{\xi}}}
\newcommand{\Bzero}{\ensuremath{\mathbf{0}}}
\newcommand{\wtBv}{\ensuremath{\skew1\wt{\Bv}}}
\newcommand{\wtBw}{\ensuremath{\skew1\wt{\Bw}}}
\newcommand{\wtBvarphi}{\ensuremath{\wt{\Bvarphi}}}
\newcommand{\Bvarphi}{\ensuremath{\boldsymbol{\varphi}}}
\newcommand{\BVp}{\ensuremath{\BV_{\operatorname{p}}}}
\newcommand{\BWp}{\ensuremath{\BW_{\operatorname{p}}}}
\newcommand{\BAr}{\ensuremath{\skew5\wt{\BA}}} 
\newcommand{\BBr}{\ensuremath{\skew2\wt{\BB}}}
\newcommand{\BCr}{\ensuremath{\skew2\wt{\BC}}}
\newcommand{\BDr}{\ensuremath{\skew2\wt{\BD}}} 
\newcommand{\BEr}{\ensuremath{\skew4\wt{\BE}}}
\newcommand{\BGr}{\ensuremath{\skew4\wt{\BG}}}
\newcommand{\BMr}{\ensuremath{\skew4\wt{\BM}}} 
\newcommand{\BMk}{\ensuremath{\BM_k}}
\newcommand{\Bxr}{\ensuremath{\skew1\wt{\Bx}}} 
\newcommand{\BVr}{\ensuremath{\BV}} 
\newcommand{\BWr}{\ensuremath{\BW}}
\newcommand{\yr}{\ensuremath{\skew1\wt{y}}}
\newcommand{\Byr}{\ensuremath{\skew1\wt{\By}}}
\def\lambdared{\ensuremath{{\wt{\lambda}}}}
\def\Bbred{\ensuremath{{\skew3\wt{\Bb}}}}
\def\Bcred{\ensuremath{{\skew1\wt{\Bc}}}}
\def\Bmred{\ensuremath{{\wt{\Bm}}}}
\def\Bsred{\ensuremath{{\skew4\wt{\Bs}}}}
\def\Btred{\ensuremath{{\wt{\Bt}}}}
\def\BSred{\ensuremath{\skew2\wt{\BS}}}
\def\BTred{\ensuremath{\wt{\BT}}}
\newcommand{\BGcheck}{\ensuremath{\skew3\check{\BG}}}
\newcommand{\BGbar}{\ensuremath{\skew3\overline{\BG}}}
\newcommand{\BMkr}{\ensuremath{\wt{\BM}_{k}}}
\newcommand{\BGonered}{\ensuremath{\BGr_{1}}}
\newcommand{\BGtwored}{\ensuremath{\BGr_{2}}}
\newcommand{\BGonehat}{\ensuremath{{\BGcheck}_{1}}}
\newcommand{\BGtwohat}{\ensuremath{\BGcheck_{2}}}
\newcommand{\BGonebar}{\ensuremath{\BGbar_{1}}}
\newcommand{\BGtwobar}{\ensuremath{\BGbar_{2}}}
\newcommand{\BGoneRedBar}{\ensuremath{\skew3\overline{\BGr}_1}}
\newcommand{\BGtwoRedBar}{\ensuremath{\skew3\overline{\BGr}_2}}
\newcommand{\BGtwoCheckBar}{\ensuremath{\skew3\overline{\BGcheck}_2}}
\newcommand{\MOR}{\textsf{MOR}\xspace}
\newcommand{\ROM}{\textsf{ROM}\xspace}
\newcommand{\LTI}{\textsf{LTI}\xspace}
\newcommand{\LQO}{\textsf{LQO}\xspace}
\newcommand{\IRKA}{\textsf{IRKA}\xspace}
\newcommand{\SISO}{\textsf{SISO}\xspace}
\newcommand{\SIMO}{\textsf{SIMO}\xspace}
\newcommand{\MIMO}{\textsf{MIMO}\xspace}
\newcommand{\plotfontsize}{\footnotesize}
\newcommand{\LQOIRKA}{\ensuremath{\mathsf{LQO}\mbox{-}\mathsf{IRKA}}}
\newcommand{\LQOIRKAeigs}{\ensuremath{\mathsf{LQO}\mbox{-}\mathsf{IRKA_{eigs}}}}
\newcommand{\LQOIRKAimag}{\ensuremath{\mathsf{LQO}\mbox{-}\mathsf{IRKA_{imag}}}}
\newcommand{\LQOBT}{\ensuremath{\mathsf{LQO}\mbox{-}\mathsf{BT}}}
\newcommand{\relerr}{\ensuremath{\operatorname{relerr}}}
\newcommand{\interpOneStep}{\ensuremath{\mathsf{interp_{oneStep}}}}
\newcommand{\interpOneStepEigs}{\ensuremath{\mathsf{interp_{oneStep,\eigs}}}}
\newcommand{\interpOneStepImag}{\ensuremath{\mathsf{interp_{oneStep,\imagInit}}}}
\newcommand{\imagInit}{\ensuremath{\mathsf{imag}}}
\newcommand{\eigs}{\ensuremath{\mathsf{eigs}}}
\newcommand{\imagsf}{\ensuremath{\mathsf{imag}}}
\newcommand{\MATLAB}{\ensuremath{\mathsf{MATLAB}}}
\definecolor{matlabblue}{HTML}{0072BD}
\definecolor{matlaborange}{HTML}{D95319}
\definecolor{matlabyellow}{HTML}{EDB120}
\definecolor{matlabpurple}{HTML}{7E2F8E}
\definecolor{matlabgreen}{HTML}{77AC30}
\definecolor{matlablightblue}{HTML}{4DBEEE}
\definecolor{matlabred}{HTML}{A2142F}
\tikzstyle{sline} = [
\crefname{hypothesis}{Hypothesis}{Hypotheses}
\title{$\mathcal{H}_2$-optimal model reduction  of linear quadratic-output systems by multivariate rational interpolation
\thanks{Preprint.\funding{The work of Reiter and Gugercin  was supported in part by the US National Science Foundation grant AMPS-2318880.}}}
\author{Sean Reiter\thanks{Courant Institute of Mathematical Sciences, New York University, New York, NY 10012 USA
  (\email{s.reiter@nyu.edu}).}
\and Ion Victor Gosea\thanks{Max Planck Institute for Dynamics of Complex Technical Systems,
  Sandtorstr. 1, 39106 Magdeburg, Germany (\email{gosea@mpi-magdeburg.mpg.de}, \email{pontes@mpi-magdeburg.mpg.de}).}
\and Igor Pontes Duff\footnotemark[3] 
\and  Serkan Gugercin\thanks{Department of Mathematics and Division of Computational Modeling and Data
  Analytics, Academy of Data Science, Virginia Tech,
  Blacksburg, VA 24061, USA (\email{gugercin@vt.edu}).}
  }
\begin{document}

\maketitle

\begin{abstract}
This paper addresses the $\CH_2$-optimal approximation of linear dynamical systems with quadratic-output functions, also known as linear quadratic-output systems.
Our major contributions are threefold. 
First, we derive interpolatory first-order optimality conditions for the linear quadratic-output $\CH_2$ minimization problem. 
These conditions correspond to the mixed-multipoint tangential interpolation of the full-order linear- and quadratic-output transfer functions, and generalize the Meier-Luenberger optimality framework for the $\CH_2$-optimal model reduction of linear time-invariant systems.
Second, given the optimal interpolation data, we show how to enforce the interpolatory optimality conditions explicitly by Petrov-Galerkin projection of the full-order model. 
Third, to find the optimal interpolation data, we build on this projection framework and propose a generalization of the iterative rational Krylov algorithm for the $\CH_2$-optimal model reduction of linear quadratic-output systems, called \LQOIRKA{}.
Upon convergence, \LQOIRKA{}  produces reduced linear quadratic-output systems that satisfy the interpolatory optimality conditions.  The method only requires solving shifted linear systems and matrix-vector products, thus making it suitable for large-scale problems. 
Numerical examples are included to illustrate the effectiveness of the proposed method.
\end{abstract}

\begin{keywords}
model reduction, $\CH_2$-optimality, linear quadratic-output systems, tangential interpolation, multivariate rational interpolation
\end{keywords}

\begin{MSCcodes}
    34C20, 
    41A05, 
    49K15, 
    65J05, 
    65F99, 
    93A15, 
    93C10, 
    93C80  
\end{MSCcodes}

\section{Introduction}
\label{sec:intro}
Mathematical models of dynamical systems are essential tools for understanding and forecasting the behavior of complex physical phenomena.
These systems, which are collections of ordinary differential equations arising from, e.g., discretizations of a partial differential equation, often have large dimension 
due to the fine spatial and temporal resolutions required for accurate predictions.
This in turn commands significant computational resources such as time and memory.
A remedy to this problem is model-order reduction (\MOR{}): the construction of low-order and cheap-to-evaluate surrogate models that are used as high-fidelity approximations in place of the original large-scale system for downstream computational tasks.
We refer to~\cite{Ant05,BenMS05,BenOCW17,AntBG20,BauBF14} and the references therein for a comprehensive overview.

In this work, we consider dynamical systems that evolve linearly in the state equation and contain (up to) quadratic terms in the output equation.
In state space, such systems are formulated as
\begin{align}\label{eq:lqoSys}
    \Sys: \begin{cases} \BE\dot\Bx(t)=\BA\Bx(t)+\BB \Bu(t),\quad\Bx(0)=\Bzero_n,\\[6pt]
    \hphantom{\BE}\By(t)=\underbrace{\BC\Bx(t)}_{\defeq\By_1(t)} +
    \underbrace{\BM\left(\Bx(t)\otimes\Bx(t)\right)}_{\defeq\By_2(t)},
\end{cases}
\end{align}
where $\BE,\BA\in\Rnn$, $\BB\in\Rnm$, $\BC\in\Rpn$ and $\BM\in\R^{p\times n^2}$ describe the time evolution of the internal state variables $\Bx\colon[0,\infty)\to\Rn$ and the outputs $\By\colon[0,\infty)\to\Rp$ under the influence of external inputs $\Bu\colon[0,\infty)\to\Rm$. The matrices $\BE,\BA,\BB,\BC,$ and $\BM$ are a \emph{state-space realization} of $\Sys$. 
We use $\Bzero_n\in\Rn$ to denote the $n$-dimensional vector of all zeros.
We refer to systems of the form~\cref{eq:lqoSys} as \emph{linear quadratic-output} (\LQO{}) systems.
Throughout this work, we assume that the system~\cref{eq:lqoSys} is \emph{asymptotically stable}, i.e., the eigenvalues of the matrix pencil $s\BE - \BA$ have strictly negative real parts, and that the matrix $\BE$ is nonsingular. 
For a discussion of \LQO{} systems with a singular $\BE$ term, see~\cite{PrzDGB24}.
Using standard identities of the Kronecker product, the quadratic term $\By_2$ in~\cref{eq:lqoSys} may also be expressed as
\begin{align}
    \label{eq:altQO}
    \By_2(t)=\BM\left(\Bx(t)\otimes\Bx(t)\right) = \begin{bmatrix}
        \Bx(t)^{\trans}\BM_1\Bx(t)\\[1pt]
        \vdots\\[1pt]
        \Bx(t)^{\trans}\BM_p\Bx(t)\\[1pt]
    \end{bmatrix}~~\mbox{where}~~\BM\defeq\begin{bmatrix}
        \vecm{(\BM_1)}^{\trans}\\[1pt]
        \vdots\\[1pt]
        \vecm{(\BM_p)}^{\trans}\\[1pt]
    \end{bmatrix}
\end{align}
and $\vecm\colon\R^{n\times n}\to\R^{n^2}$ is the vectorization operator.
The matrix $\BM_k\in\Rnn$ models the quadratic component of the $k$-th output.
Because $\BM_k$ can always be replaced by its symmetric part, we assume henceforth that $\BM_k$ is symmetric for each $k$.

Dynamical systems with quadratic-output functions such as~\cref{eq:lqoSys} appear naturally in applications where one is interested in observing quantities computed as the product of time- or frequency-components of the state.
For instance, in the study of structural dynamics or vibro-acoustic problems,
the root mean squared displacement~\cite{VanVNLM12,YueM12,AumW23,ReiW24} of the state $\Bx$ is used to model the vibrational character or average spatial deformation of a given surface.
Other prominent examples include observables that correspond to power or energy~\cite{VanVNLM12,Pul23,HolNSU25}, e.g., the internal energy functional of a port-Hamiltonian system~\cite{MehU23,Van06}, quadratic cost functions in optimal control or design problems~\cite{DiazHGA23,YueM13}, and the variance of a collection of random variables in stochastic modeling~\cite{PulA19}.

With regard to the model reduction of \LQO{} systems~\cref{eq:lqoSys}, our goal is the construction of a new, so-called \emph{reduced-order model} (\ROM{}) of the form
\begin{align}\label{eq:lqoSysRed}
    \Sysred: \begin{cases}  \BEr\dot{\Bxr}(t)=\BAr\Bxr(t)+\BBr \Bu(t),~~\Bxr(0)=\Bzero_{r},
    \\[6pt]
    \hphantom{\BEr}\Byr(t)={\BCr\Bxr(t)} +
    {\BMr\left(\Bxr(t)\otimes\Bxr(t)\right)},
    \end{cases}
\end{align}
with the reduced-order quantities $\BEr$, $\BAr\in\Rrr$, $\BBr\in\Rrm$, $\BCr \in \Rpr$, $\BMr\in\R^{p\times r^2}$, $\Bxr\colon[0,\infty)\to\Rr$ and $\Byr\colon[0,\infty)\to\Rp$ for $r \ll n$.
To be an effective surrogate, the \ROM{}~\cref{eq:lqoSysRed} should accurately reproduce the input-to-output response of the full-order system~\cref{eq:lqoSys} in the sense that $\Byr$ is a good approximation to $\By$ for all admissible inputs.
We consider here methods based on \emph{Petrov-Galerkin projection} for computing~\cref{eq:lqoSysRed}. In this regime, the model reduction task amounts to determining left and right approximation subspaces spanned by the model reduction bases $\BWr\in\Rnr$ and $\BVr\in\Rnr$ so that the reduced model~\cref{eq:lqoSysRed} is computed as
\begin{equation}
\label{eq:projMor}
\BEr=\BWr^{\trans}\BE\BVr,~~\BAr=\BWr^{\trans}\BA\BVr,~~\BBr=\BWr^{\trans}\BB,~~\BCr=\BC\BVr,~~\BMr=\BM\left(\BVr\otimes\BVr\right),
\end{equation}
or equivalently $\BMkr=\BV^{\trans}\BMk\BV$ for each $k$.
In essence, different projection-based model reduction methods amount to different strategies for choosing $\BVr$ and $\BWr$.

In the recent literature, much of the well-established technology for the approximation of purely \emph{linear time-invariant} (\LTI{}) systems~--~those with linear state \emph{and} output equations~--~has been extended to the \LQO{} setting~\cref{eq:lqoSys}.
For instance, generalizations of balancing-related \MOR{} are considered in~\cite{BenGPD21,VanM10,BalG24,PulA19,Pul23,PrzDGB24,SonZXUS24}.
Notably, Benner et al.~\cite{BenGPD21} introduce a novel algebraic quadratic-output Gramian and system $\CH_2$ norm based on the Volterra kernels of~\cref{eq:lqoSys}, and develop a related balanced truncation algorithm.
Interpolatory methods~\cite{VanVNLM12,GosA19,GosG20,DiazHGA23,ReiW24,Bu24} design $\BV$ and $\BW$ so that the rational transfer functions of the reduced-order system~\cref{eq:lqoSysRed} match those of the full-order system, or their derivatives, at specified points in the complex plane. 
Diaz et al.~\cite{DiazHGA23} introduce an overarching framework for \emph{tangential interpolation}~--~the interpolation of matrix-valued transfer functions along specified direction vectors~--~of dynamical systems with up to quadratic-bilinear dynamics and quadratic-bilinear outputs;
this general model class includes~\cref{eq:lqoSys} as a special case.
However, the placement of interpolation points, selection of tangent directions, and type of interpolation one should enforce to guarantee quality surrogates has yet to be investigated in the setting of~\cref{eq:lqoSys}.
We also highlight the recent work by Hillebrecht and Unger~\cite{HilU25}, which introduces a $\CH_\infty$ norm for \LQO{} systems and a related optimization-based \MOR{} algorithm.

The focus of this work is the $\CH_2$-optimal \MOR{} of \LQO{} systems.
Formally, given an order-$n$ \LQO{} system $\Sys$ as in~\cref{eq:lqoSys}, we aim to identify, for a fixed order of reduction $r\ll n$, a reduced-order \LQO{} system $\Sysred$ as in~\cref{eq:lqoSysRed} such that the $\CH_2$ error in approximating $\Sys$ with $\Sysred$ is minimized.
The $\CH_2$-optimal \MOR{} of \LQO{} systems has also been studied in~\cite{Reietal25,YanWJ25}. 
The Wilson (or Gramian-based) $\CH_2$-optimality framework from linear \MOR{}~\cite{Wil70,VanGPA08} is established for \LQO{} systems in~\cite{Reietal25}, and~\cite{YanWJ25} performs $\CH_2$-optimal \MOR{} using the Riemannian $\mathsf{BFGS}$ method.
In the \LTI{} setting, $\CH_2$-optimal reduced models are tangential interpolants of the full-order system; the optimal interpolation points are the mirror images of the reduced model poles.
This was first shown for single-input, single-output (\SISO{}) systems by Meier and Luenberger~\cite{MeiL67}, and later established for multiple-input, multiple-output (\MIMO{}) systems in~\cite{GugAB08,VanGPA08,BunKVW10}.
Similar results hold for other classes of weakly nonlinear systems; e.g., in the $\CH_2$-optimal \MOR{} of \emph{bilinear}~\cite{BenB12,Fla12,FlaG15} or \emph{quadratic-bilinear}~\cite{CaoEtal22,BenGG18} dynamical systems, optimal approximations satisfy so-called \emph{multipoint Volterra series interpolation} conditions.
Thus, it is natural to question whether there exist similar characterizations of $\CH_2$-optimal approximations to \LQO{} systems based on transfer function interpolation. In this paper, we provide a complete and affirmative answer to this question.

More specifically, we establish a novel interpolation-based optimality framework for the $\CH_2$-optimal model reduction of \LQO{} systems~\cref{eq:lqoSys}.
Our main contributions are as follows: After reviewing the requisite mathematical preliminaries in~\Cref{sec:bg}, we provide new formulae for calculating the Hardy $\CH_2$ norm and inner product of an \LQO{} system~\cref{eq:lqoSys} in~\Cref{thm:H2PoleRes}.
In~\Cref{sec:H2OptimalMOR}, we apply~\Cref{thm:H2PoleRes} to derive tangential interpolation-based first-order optimality conditions for the \LQO{} $\CH_2$-optimal model reduction problem. The optimality conditions are presented in~\Cref{thm:lqoH2OptInterpolationCons}; they amount to the Lagrange interpolation of the full-order linear- and quadratic-output transfer functions individually, as well as to the Lagrange and Hermite interpolation of their weighted sum.
We refer to the latter type of interpolation as \emph{mixed-multipoint} tangential interpolation.
In~\Cref{thm:enforceMixedInterp}, we show how to enforce the mixed-multipoint tangential interpolation conditions by Petrov-Galerkin projection~\cref{eq:projMor} using appropriately constructed bases $\BV$ and $\BW$.
To find the optimal interpolation data, an extension of the Iterative Rational Krylov Algorithm (\IRKA)~\cite{GugAB08} is proposed in~\Cref{sec:CompFramework} for the $\CH_2$-optimal \MOR{} of \LQO{} systems~\cref{eq:lqoSysRed}.
The proposed method, which we call \emph{linear quadratic-output \textsf{IRKA}} (\LQOIRKA{}), aims to compute \LQO{}-\ROM{}s that satisfy the interpolatory optimality conditions by iteratively corrected projection.
\Cref{sec:numericalResults} illustrates the effectiveness of \LQOIRKA{} on two model reduction benchmarks;~\Cref{sec:conclusions} concludes the paper.

\section{Mathematical background and preliminaries}
\label{sec:bg}

In this section, we establish the necessary mathematical preliminaries required for the forthcoming results.

\subsection{Kronecker product and vectorization identities}
\label{ss:Kron}
First, for ease of reference in the forthcoming technical arguments, we recall from~\cite{MagN79,Bre78} some facts about the Kronecker product $\BX\otimes\BY\in\R^{n_1m_1\times n_2m_2}$ of $\BX\in\R^{n_1\times n_2}$, $\BY\in\R^{m_1\times m_2}$.

If the matrices $\BX_1, \BX_2,$ and $\BY_1,\BY_2$ are compatible in the sense that one can form the products $\BX_1\BX_2$ and $\BY_1\BY_2$, then 
\begin{equation}
    \label{eq:mixedProdProperty}
    \left(\BX_1\otimes\BY_1\right)\left(\BX_2\otimes\BY_2\right)=\left(\BX_1\BX_2\otimes\BY_1\BY_2\right).
\end{equation}
A well-known identity involving the vectorization operator $\vecm\colon\R^{n_1\times n_2}\to\R^{n_1n_2}$ and the Kronecker product that we will exploit is
\begin{equation}
\label{eq:vecKronId}
    \vecm(\BW\BX{\BY})=\left(\BY^{\trans}\otimes\BW\right)\vecm(\BX),
\end{equation}
for matrices $\BX,\BW$ and $\BY$ of compatible dimensions.
In general, the Kronecker product is not commutative in the sense that $(\BX\otimes\BY)\neq (\BY\otimes\BX)$. It is \emph{permutation} equivalent in the sense that $\BK_{n_1n_2}(\BX\otimes\BY)\BK_{m_1m_2}= (\BY\otimes\BX)$,
where $\BK_{n_1n_2}\in\R^{n_1n_2\times n_1n_2}$ and $\BK_{m_1m_2}\in\R^{m_1m_2\times m_1m_2}$ are the \emph{perfect shuffle} matrices defined in~\cite[Def.~3.1]{MagN79}.
From~\cite[Theorem~3.1]{MagN79},
one has for any $\BX\in\R^{n_1\times n_2}$ and $\Bv\in\R^{n_2}$:
\begin{equation}
    \label{eq:kronProperties}
    \BK_{n_1n_2}\left(\BX\otimes \Bv\right)=\left( \Bv\otimes \BX\right),~~~
    \BK_{n_1n_2}\vecm\left(\BX\right)=\vecm\left(\BX^{\trans}\right).
\end{equation} 
Consider the quadratic-output matrix $\BM$ in~\cref{eq:altQO}. For any $\BX\in\C^{n\times n}$ and $\Bz\in\Cn$:
\begin{equation*}
    \BM(\BX\otimes \Bz)=\BM\BK_{nn}(\Bz\otimes\BX)=\begin{bmatrix}
    \vecm{(\BM_1)}^{\trans}\BK_{nn}\\[1pt]
    \vdots\\[1pt]
    \vecm{(\BM_p)}^{\trans}\BK_{nn}\\[1pt]
\end{bmatrix}(\Bz\otimes \BX)=\begin{bmatrix}
    \vecm{(\BM_1)}^{\trans}\\[1pt]
    \vdots\\[1pt]
    \vecm{(\BM_p)}^{\trans}\\[1pt]
\end{bmatrix}(\Bz\otimes \BX),\\
\end{equation*}
by~\cref{eq:kronProperties} and because $\BM_k=\BM_k^{\trans}$ for each $k$.
This yields the symmetry relation
\begin{equation}
    \label{eq:qoMatSymmetry}
    \BM\left(\BX\otimes\Bz\right)=\BM\left(\Bz\otimes \BX\right).
\end{equation}

\subsection{Input-output representations of linear quadratic-output systems}
\label{ss:tf}
Multiple classes of weakly nonlinear dynamical systems can be understood via an infinite series of \emph{Volterra kernels}~\cite{Rug81}.
Because the nonlinearity in~\cref{eq:lqoSys} is restricted to the output equation, only \emph{two} kernels are required to fully describe the system's input-to-output response~\cite{BenGPD21}.
Solving for the state in~\cref{eq:lqoSys} and plugging it into the equation for $\By(t)$ reveals the input-to-output relationship for any time $t\geq 0$
\begin{equation}
    \label{eq:lqoConv}
    \By(t)=\int_0^t {\Bg_1(\tau)}\Bu(t-\tau)\,\ds\tau 
    + \int_0^t\int_0^t {\Bg_2(\tau_1,\tau_2)}\left(\Bu(t-\tau_1) \otimes \Bu(t-\tau_2)\right)\,\ds\tau_1@\ds\tau_2.
\end{equation}
The Volterra kernels $\Bg_1\colon[0,\infty)\to \Rpm$ and $\Bg_2\colon[0,\infty)\times [0,\infty) \to \R^{p\times m^2}$ that appear in~\cref{eq:lqoConv} 
are defined as
\begin{subequations}
\label{eq:lqoKernels}
\begin{align}
        \label{eq:linKernel}
        \Bg_1(t)&\defeq\BC e^{\BE^{-1}\BA t}\BE^{-1}\BB\\
        \label{eq:quadKernel}
        \mbox{and}~~\Bg_2(t_1,t_2)&\defeq\BM\left(e^{\BE^{-1}\BA t_1}\BE^{-1}\BB\otimes e^{\BE^{-1}\BA t_2}\BE^{-1}\BB\right).
\end{align}
\end{subequations}
By computing the univariate and bivariate Laplace transformations~\cite[Ch.~7.3.1]{AntBG20} of the Volterra kernels in~\cref{eq:linKernel} and~\cref{eq:quadKernel}, one obtains an alternative representation of the \LQO{} system~\cref{eq:lqoSys} in the form of \emph{two rational transfer functions}; see also~\cite[Section~3.2]{DiazHGA23}.
Explicitly, these are the complex matrix-valued functions $\BG_1\colon \C\to\Cpm$ and $\BG_2\colon \C\times \C\to\C^{p\times m^2}$ defined as
\begin{subequations}
\label{eq:lqoTfs}
\begin{align}
    \label{eq:lqoLinTf}
    \BG_1(s_1)&\defeq\BC (s_1\BE-\BA)^{-1}\BB\\
    \label{eq:lqoQuadTf}
    \mbox{and}~~\BG_2(s_1,s_2)&\defeq\BM\left((s_1\BE-\BA)^{-1}\BB\otimes (s_2\BE-\BA)^{-1}\BB\right).
\end{align}
\end{subequations}
The transfer functions $\BG_1$ and $\BG_2$ correspond to the linear- and quadratic-components of the \LQO{} system's input-to-output response.
One can view $\BG_1$ and $\BG_2$ as describing the frequency response of the coupled linear- and quadratic-output subsystems that comprise~\cref{eq:lqoSys}.
Note that $\BG_1$ is precisely the usual transfer function of the \LTI{} system obtained from~\cref{eq:lqoSys} by setting $\BM=\Bzero_{p\times n^2}$.

As a straightforward consequence of~\cref{eq:qoMatSymmetry}, it follows that the quadratic-output transfer function $\BG_2$ in~\cref{eq:lqoQuadTf} and its first partial derivatives are symmetric with respect to the interchange of their arguments and matrix-vector products.
These symmetry conditions will be used to simplify the interpolation-based optimality conditions that we derive in~\Cref{sec:H2OptimalMOR}.
Below, we use the notation 
\begin{equation*}
    \tfrac{\partial}{\partial s_i}\BG_2(s,z)=\tfrac{\partial}{\partial s_i}\BG_2(s_1,s_2)\vert_{(s_1,s_2)=(s,z)},~~i=1,2,
\end{equation*}
for the partial derivatives of $\BG_2$.
\begin{lemma}
    \label{lemma:qoTfSymmetry} 
    Let $\BG_2\colon \C\times \C\to\C^{p\times m^2}$ be defined as in~\cref{eq:lqoQuadTf}. 
    Then, for any $\BU\in\C^{m\times \ell}$ and $\Bv\in\Cm$:
    \begin{align}
        \label{eq:G2Symmetry}
        \BG_2(s,z)(\BU\otimes\Bv) &=\BG_2(z,s)(\Bv\otimes\BU),\\
        \label{eq:G2PartialSymmetry}
        \frac{\partial}{\partial s_1}\BG_2(s,z)(\BU\otimes \Bv)&=\frac{\partial}{\partial s_2}\BG_2(z,s)(\Bv\otimes\BU).
    \end{align}
\end{lemma}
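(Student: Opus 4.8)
The plan is to strip the resolvent factors off both sides using the mixed product property \cref{eq:mixedProdProperty}, so that each identity collapses to a single Kronecker-symmetry statement about $\BM$, which is then handled with the perfect-shuffle identities \cref{eq:kronPropE}--\cref{eq:kronPropF}. Write $\BF(s)\defeq(s\BE-\BA)^{-1}\BB\in\C^{n\times m}$, so that $\BG_2(s_1,s_2)=\BM\bigl(\BF(s_1)\otimes\BF(s_2)\bigr)$ by \cref{eq:lqoQuadTf}. First I would apply \cref{eq:mixedProdProperty} to both sides of \cref{eq:G2Symmetry}:
\[
  \BG_2(s,z)(\BU\otimes\Bv)=\BM\bigl(\BF(s)\BU\otimes\BF(z)\Bv\bigr),\qquad
  \BG_2(z,s)(\Bv\otimes\BU)=\BM\bigl(\BF(z)\Bv\otimes\BF(s)\BU\bigr).
\]
Thus \cref{eq:G2Symmetry} reduces to the claim $\BM(\BX\otimes\Bw)=\BM(\Bw\otimes\BX)$ for every $\BX\in\C^{n\times\ell}$ and every $\Bw\in\Cn$. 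It is worth emphasizing that this claim needs the second factor to be a \emph{vector}: the $j$-th columns of $\BX\otimes\Bw$ and $\Bw\otimes\BX$ are $\Bx_j\otimes\Bw$ and $\Bw\otimes\Bx_j$ and appear in the \emph{same} position, so the $p\times m^2$ output ordering is not scrambled~--~this is also the reason \cref{lemma:qoTfSymmetry} is stated with $\BU$ a matrix but $\Bv$ a vector.

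For the claim I would argue columnwise. Since $\Bx_j$ and $\Bw$ both lie in $\Cn$, the first identity in \cref{eq:kronPropF} (applied with the rank-one matrix $\Bw\Bx_j^{\trans}$ and combined with \cref{eq:vecKronId}) gives $\BK_{nn}(\Bx_j\otimes\Bw)=\Bw\otimes\Bx_j$ for each column $\Bx_j$ of $\BX$, hence $\Bw\otimes\BX=\BK_{nn}(\BX\otimes\Bw)$, where $\BK_{nn}$ is the $n^2\times n^2$ perfect-shuffle matrix. It therefore suffices to show $\BM\BK_{nn}=\BM$. The $k$-th row of $\BM\BK_{nn}$ is $\vecm(\BM_k)^{\trans}\BK_{nn}$; since $\BK_{nn}$ is symmetric (\cite[Thm.~3.1]{MagN79}), the second identity in \cref{eq:kronPropF} rewrites this as $\vecm(\BM_k^{\trans})^{\trans}$, which equals $\vecm(\BM_k)^{\trans}$ because $\BM_k$ is symmetric (see the discussion after \cref{eq:altQO})~--~i.e., the $k$-th row of $\BM$. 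This settles the claim and hence \cref{eq:G2Symmetry}.

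For \cref{eq:G2PartialSymmetry} I would differentiate $\BG_2(s_1,s_2)=\BM(\BF(s_1)\otimes\BF(s_2))$ using $\BF'(s)\defeq\tfrac{d}{ds}\BF(s)=-(s\BE-\BA)^{-1}\BE\,\BF(s)$ and the bilinearity of $\otimes$, which yields $\tfrac{\partial}{\partial s_1}\BG_2(s_1,s_2)=\BM(\BF'(s_1)\otimes\BF(s_2))$ and $\tfrac{\partial}{\partial s_2}\BG_2(s_1,s_2)=\BM(\BF(s_1)\otimes\BF'(s_2))$. Evaluating the first at $(s_1,s_2)=(s,z)$ and the second at $(s_1,s_2)=(z,s)$ and applying \cref{eq:mixedProdProperty} once more,
\[
  \tfrac{\partial}{\partial s_1}\BG_2(s,z)(\BU\otimes\Bv)=\BM\bigl(\BF'(s)\BU\otimes\BF(z)\Bv\bigr),\qquad
  \tfrac{\partial}{\partial s_2}\BG_2(z,s)(\Bv\otimes\BU)=\BM\bigl(\BF(z)\Bv\otimes\BF'(s)\BU\bigr),
\]
so \cref{eq:G2PartialSymmetry} is exactly the claim above with $\BX=\BF'(s)\BU\in\C^{n\times\ell}$ and $\Bw=\BF(z)\Bv\in\Cn$. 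I do not expect a genuine obstacle here; the only points that need care are the Kronecker bookkeeping~--~in particular, that the columns of $\BX\otimes\Bw$ and $\Bw\otimes\BX$ are in register precisely because $\Bw$ is a vector~--~and the observation that the reduction $\BM\BK_{nn}=\BM$ is exactly where the standing symmetry assumption on each $\BM_k$ enters.
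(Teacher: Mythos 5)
Your proposal is correct and follows essentially the same route as the paper: reduce via the mixed product property to the single claim $\BM(\BX\otimes\Bw)=\BM(\Bw\otimes\BX)$ (the paper's~\cref{eq:qoMatSymmetry}), prove it with the perfect-shuffle identities together with the symmetry of each $\BM_k$, and note that the derivative statement is the same claim applied to $\BF'(s)\BU$ and $\BF(z)\Bv$. The only differences are cosmetic (deriving the shuffle relation columnwise from \cref{eq:kronPropF}/\cref{eq:vecKronId} instead of citing \cref{eq:kronPropE}, and writing out the derivative case that the paper dismisses as analogous).
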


\begin{proof}
    By applying~\cref{eq:mixedProdProperty} and~\cref{eq:qoMatSymmetry} simultaneously, it follows that
    \begin{align*}
        \BG_2(s,z)(\BU\otimes\Bv) &= \BM\left((s\BE-\BA)^{-1}\BB\otimes (z\BE-\BA)^{-1}\BB\right)(\BU\otimes\Bv)  \\
        &=\BM\left((z\BE-\BA)^{-1}\BB\otimes (s\BE-\BA)^{-1}\BB\right)(\Bv\otimes\BU)=\BG_2(z,s)(\Bv\otimes\BU),
    \end{align*}
    for any $\BU\in\C^{m\times \ell}$ and $\Bv\in\Cm$, proving~\cref{eq:G2Symmetry}. Equation \cref{eq:G2PartialSymmetry} follows analogously.
\end{proof}

\subsection{The Hardy $\CH_2$ norm of a linear quadratic-output system}
To quantify the model reduction error, we use the Hardy $\CH_2$ norm of an \LQO{} system~\cite{GosA19}. 
The definitions of the system $\CH_2$ norm and inner product that we present are derived from an underlying Hilbert space structure of the linear- and quadratic-output transfer functions; see, e.g.,~\cite[Sec.~2.1]{AntBG20}.
Specifically, $\BG_1$ belongs to the \emph{Hardy space} $\CH_2^{p\times m}(\C_+)$ of functions $\BH_1\colon\C\to\C^{p\times m}$ that are analytic in $\C_+$, where $\C_+$ denotes the open right complex half plane, and satisfy the square integrability constraint
\begin{equation*}
    \sup_{x\,>\,0}\int_{-\infty}^\infty \|\BH_1(x+\imunit y)\|_{\frob}^2@\ds y<\infty.
\end{equation*}
Likewise, $\BG_2$ belongs to the Hardy space $\CH_2^{p\times m^2}(\C_+\times\C_+)$ of functions $\BH_2\colon\C\times \C\to\C^{p\times m^2}$ that are analytic in $\C_+\times\C_+$ and satisfy the square integrability constraint
\begin{equation*}
    \sup_{x_1,\,x_2\,>\,0}\int_{-\infty}^\infty\int_{-\infty}^\infty \|\BH_2(x_1+\imunit y_1,x_2+\imunit y_2)\|_{\frob}^2@\ds y_1@\ds y_2<\infty. 
\end{equation*}
For the transfer functions $\BG_1$ and $\BG_2$, these suprema can be shown to be achieved in the limits as $x,x_1$, and $x_2$ approach zero by analytically extending $\BG_1$ and $\BG_2$.
The norms and inner products associated with the Hardy spaces are implicitly introduced next in Definition~\ref{def:H2normTf}.
From here on, we take $\BGonebar(s)$ and $\BGtwobar(s_1,s_2)$ to mean that complex conjugation is applied \emph{only} to the matrices in the transfer function and not the arguments $s,s_1$, and $s_2$, i.e.,
\begin{align}
\begin{split}
\label{eq:conjTf}
    \BGonebar(s)&=\overline{\BC}\big(s\overline{\BE}-\overline{\BA}\big)^{-1}\overline{\BB},\\
    \BGtwobar(s_1,s_2)&=\overline{\BM}\left(\big(s_1\overline{\BE}-\overline{\BA}\big)^{-1}\overline{\BB}\otimes\big(s_2\overline{\BE}-\overline{\BA}\big)^{-1}\overline{\BB}\right).
\end{split}
\end{align}
For dynamical systems~\cref{eq:lqoSys} with real-valued state-space realizations, it follows that $\BGonebar(s)=\BG_1(s)$ and $\BGtwobar(s_1,s_2)=\BG_2(s_1,s_2)$.

\begin{definition}
    \label{def:H2normTf}
    Let $\Sys$ and $\Sysred$ be asymptotically stable \LQO{} systems as in~\cref{eq:lqoSys} and~\cref{eq:lqoSysRed} with transfer functions $\BG_1,\BG_2$ and $\BGonered,\BGtwored$ defined according to~\cref{eq:lqoTfs}.
    The \emph{$\CH_2$ inner product} of $\Sys$ and $\Sysred$ is defined as the sum of the Hardy $\CH_2$ inner products of $\BG_1$ and $\BGonered$, and $\BG_2$ and $\BGtwored$, i.e.,
    \begin{align}
    \begin{split}
        \label{eq:H2ipTf}
        \left\langle\Sys, \Sysred\,\right\rangle_{\CH_2}& \defeq \frac{1}{2\pi}\int_{-\infty}^\infty \trace\left(\BGonebar(-\imunit\omega)@ \BGonered(\imunit\omega)^{\trans}\right) \ds\omega \\
        &~~~
        + \frac{1}{(2\pi)^2}\int_{-\infty}^\infty \int_{-\infty}^\infty \trace\left(\BGtwobar(-\imunit\omega_1,-\imunit\omega_2)@ \BGtwored(\imunit\omega_1,\imunit\omega_2)^{\trans} \right) \ds\omega_1@ \ds\omega_2\\
        &=\left\langle\BG_1,\BGonered\right\rangle_{\CH_2^{p\times m}(\C_+)}+\left\langle\BG_2,\BGtwored\right\rangle_{\CH_2^{p\times m^2}(\C_+\times\C_+)}.
    \end{split}
    \end{align}
    The \emph{$\CH_2$ norm} of $\Sys$ is defined as the sum of the Hardy $\CH_2$ norms of $\BG_1$ and $\BG_2$, i.e.,
    \begin{align}
    \begin{split}
        \label{eq:H2normTf}
        \|\Sys\|_{\CH_2}^2&\defeq  \frac{1}{2\pi}\int_{-\infty}^\infty \|\BG_1(\imunit \omega)\|_{\frob}^2 \,\ds\omega
        + \frac{1}{(2\pi)^2}\int_{-\infty}^\infty\int_{-\infty}^\infty \|\BG_2(\imunit\omega_1,\imunit\omega_2)\|_{\frob}^2\,\ds\omega_1@\ds\omega_2\\
        &=\|\BG_1\|_{\CH_2^{p\times m}(\C_+)}^2+\|\BG_2\|_{\CH_2^{p\times m^2}(\C_+\times\C_+)}^2.
    \end{split}
    \end{align}    
\end{definition}

We mention that \Cref{def:H2normTf} can be formulated in terms of the Volterra kernels in~\cref{eq:lqoKernels} or the state-space matrices of~\cref{eq:lqoSys} and the system's Gramians; the latter of these is more computationally tractable,
see~\cite[Definition~3.1]{BenGPD21} and~\cite[Theorem~2.1]{Reietal25}.
It is a direct consequence of Plancherel's relation in one- and two-variables~\cite{BocC49} that the frequency- and time-domain formulations of~\cref{eq:H2ipTf} and~\cref{eq:H2normTf} are equivalent. 

Our rationale for using the $\CH_2$ norm as a performance metric stems from the fact that the $\CH_2$ \emph{system error} controls the $\CL^{p}_\infty$ \emph{output error} in the time domain. 
For an ``admissible'' input $\Bu$, the $\CL^{p}_\infty$ error between the full- and reduced-order outputs of~\cref{eq:lqoSys} and~\cref{eq:lqoSysRed} is bounded above by the corresponding $\CH_2$ system error, i.e.,
\begin{equation}
    \label{eq:H2ErrorBound}
    \|\By-\Byr\|_{\CL^p_\infty}\leq \|\Sys-\Sysred\|_{\CH_2} \left(\|\Bu\|_{\CL^m_2}^2+\|\Bu\otimes \Bu\|_{\CL^{m^2}_2}^2\right)^{1/2},
\end{equation}
where $\|\By-\Byr\|_{\CL^p_\infty}\defeq \sup_{t\geq0}\|\By(t)-\Byr(t)\|_\infty$, and
\begin{align}
\begin{split}
\label{eq:L2norms}
    \|\Bu\|_{\CL^m_2}^2\defeq\int_0^{\infty}\|\Bu(\tau)\|_{2}^2@\ds\tau,~~
    \|\Bu\otimes\Bu\|_{\CL^{m^2}_2 }^2\defeq\int_0^{\infty}\hspace{-1mm}\int_0^{\infty}\|\Bu(\tau_1)\otimes\Bu(\tau_2)\|_{2}^2@\ds\tau_1@\ds\tau_2.
\end{split}
\end{align}
By ``admissible'' $\Bu$, we mean that the norms in~\cref{eq:L2norms} are finite.
We refer to~\cite[Theorem~3.4]{BenGPD21} or~\cite{Reietal25} for a derivation of~\cref{eq:H2ErrorBound}. 
Thus, if one wishes to ensure that the 
output error induced by~\cref{eq:lqoSysRed} is uniformly small over time $t\geq 0$ for any $\CL_2$ input, then the bound~\cref{eq:H2ErrorBound} suggests minimizing the corresponding $\CH_2$ model reduction error.

This motivates our study of the following $\CH_2$-optimal model reduction problem. Given an order-$n$ asymptotically stable \LQO{} system as in~\cref{eq:lqoSys}, we seek an asymptotically stable reduced model $\Sysred$ as in~\cref{eq:lqoSysRed} of a fixed approximation order $1\leq r< n$ such that the $\CH_2$ error in approximating $\Sys$ is minimized, i.e., $\Sysred$ solves
\begin{equation}
    \label{eq:H2OptimalMORProb}
      \|\Sys-\Sysred\|_{\CH_2}^2=\min_{\textrm{order}(\Syscheck)=r} \|\Sys-\Syscheck\|_{\CH_2}^2~~\mbox{such that}~\Syscheck~\mbox{is asymptotically stable.}
\end{equation} 
The squared $\CH_2$ error is only used for the ease of deriving first-order optimality conditions later on.
The $\CH_2$ minimization problem~\cref{eq:H2OptimalMORProb} is in general nonconvex, and global minimizers are hard to characterize. 
Thus, we adopt the more modest goal of identifying \ROM{}s~\cref{eq:lqoSysRed} that satisfy some first-order necessary conditions for local optimality. 
Here, we derive conditions based upon the tangential interpolation of the (univariate) linear- and (multivariate) quadratic-output transfer functions in~\cref{eq:lqoTfs}.

\subsection{A pole-residue formulation of the linear quadratic-output $\CH_2$ system norm} 
Before considering~\cref{eq:H2OptimalMORProb}, we first derive new expressions for computing the $\CH_2$ inner product~\cref{eq:H2ipTf} and norm \cref{eq:H2normTf} of an \LQO{} system~\cref{eq:lqoSys} in terms of the poles and residues of its transfer functions $\BG_1$ and $\BG_2$ in~\cref{eq:lqoTfs}.
These expressions will enable us to reformulate the $\CH_2$ minimization problem~\cref{eq:H2OptimalMORProb} as a multivariate rational approximation problem, and ultimately derive the interpolatory optimality conditions that are presented in~\Cref{sec:H2OptimalMOR}.

Consider an asymptotically stable reduced \LQO system $\Sysred$ as in~\cref{eq:lqoSysRed}. Henceforth and unless otherwise specified, we assume that $\Sysred$ has \emph{simple} poles $\lambdared_1,\ldots,\lambdared_r$.
Let $\BGonered$ and $\BGtwored$ be the transfer functions of $\Sysred$ defined according to~\cref{eq:lqoTfs}.
Because the poles of $\Sysred$ are simple, the pair $\BAr,\BEr$ is diagonalizable and satisfies
\begin{equation*}
    \BTred^{\trans}\BAr\BSred=\BDr~~\mbox{and}~~\BTred^{\trans}\BEr\BSred =\BI_r,~~\mbox{where}~~\BDr = \diag(\lambdared_1,\ldots,\lambdared_r),
\end{equation*}
the columns of $\BTred,\BSred\in\Crr$ carry the generalized left and right eigenvectors $\Btred_j$ and $\Bsred_j$ of $\BAr$, and $\BEr$, and $\BI_r$ is the $r\times r$ identity matrix.
It is straightforward to verify that the transfer functions $\BGonered$ and $\BGtwored$ are invariant with respect to the underlying state-space realization~\cref{eq:lqoSysRed} of $\Sysred$.
Thus, we assume without loss of generality that the realization of $\Sysred$ in~\cref{eq:lqoSysRed} is such that $\BEr=\BI_r$ and $\BAr=\BDr$.
Expanding $\BGonered$ and $\BGtwored$ in this representation, we obtain the \emph{pole-residue expansions}
\vspace{-.5mm}
\begin{equation}
\label{eq:poleResidue}
    \BGonered(s)=\sum_{j=1}^r\frac{\Bcred_j\Bbred_j^{~\trans}}{s-\lambdared_j}~~~\mbox{and}~~~\BGtwored(s_1,s_2)=\sum_{j=1}^r\sum_{k=1}^r\frac{\Bmred_{j,k}(\Bbred_j\otimes\Bbred_k)^{\trans}}{(s_1-\lambdared_j)(s_2-\lambdared_k)},
\end{equation}
\vspace{-.5mm}
where the vectors $\Bbred_j\in\Cm$, $\Bcred_j\in\Cp$, and $\Bmred_{j,k}\in\Cp$ defined by
\vspace{-.5mm}
\begin{align}   
    \label{eq:resComponents}
    \Bbred_j^{~\trans}\defeq \Btred_j^{~\trans}\BBr,~~\Bcred_j\defeq\BCr\Bsred_j,~\,\mbox{and}~\,\Bmred_{j,k}\defeq \BMr\left(\Bsred_j\otimes\Bsred_k\right)~~\mbox{for}~~j,k=1,\ldots,r,
\end{align}
\vspace{-.5mm}
are called the \emph{residue directions} of $\BGonered$ and $\BGtwored$.
As a direct consequence of~\cref{eq:qoMatSymmetry}, the left residue directions $\Bmred_{j,k}$ obey the symmetry condition
\vspace{-.5mm}
\begin{equation}
    \label{eq:qoResidueSymmetry}
    \Bmred_{j,k}=\BMr\left(\Bsred_j\otimes\Bsred_k\right)=\BMr\left(\Bsred_k\otimes\Bsred_j\right)=\Bmred_{k,j}~\,\mbox{for each}~\, j,k=1,\ldots,r.
\end{equation}
\vspace{-.5mm}
Similar pole-residue expansions to~\cref{eq:poleResidue} can be derived in the case of repeated poles, although these scenarios rarely appear in practice; see~\cite{VanGA10} for the linear case. 
The expansions in~\cref{eq:poleResidue} enable us to derive the following expressions.

\begin{theorem}
    \label{thm:H2PoleRes}
    Suppose that $\Sys$ and $\Sysred$ are asymptotically stable \LQO{} systems as in~\cref{eq:lqoSys} and~\cref{eq:lqoSysRed} having the transfer functions $\BG_1,\BG_2$, and $~\BGonered,\BGtwored$ defined according to~\cref{eq:lqoTfs}, and that $\Sysred$ has simple poles $\lambdared_1,\ldots,\lambdared_r$.
    Then, the $\CH_2$ inner product~\cref{eq:H2ipTf} of $\Sys$ and $\Sysred$ and the norm~\cref{eq:H2normTf} of $\Sysred$ are given by
    \begin{align}
    \begin{split}
    \label{eq:H2ipPoleRes}
        \left\langle\Sys,\Sysred\,\right\rangle_{\CH_2}&=\sum_{i=1}^r \Bcred_i^{\,\trans}\BGonebar(-{\lambdared}_i)\Bbred_i+\sum_{j=1}^r\sum_{k=1}^r \Bmred_{j,k}^{\,\trans}\BGtwobar(-{\lambdared}_j,-{\lambdared}_k)(\Bbred_j\otimes\Bbred_k)\\
        &=\langle\BG_1,@\BGonered\rangle_{\CH_2^{p\times m}(\C_+)}+\langle\BG_2,@\BGtwored\rangle_{\CH_2^{p\times m^2}(\C_+\times \C_+)}
    \end{split}\\
    \begin{split}
    \label{eq:H2normPoleRes}
        \mbox{and}~~\|\Sysred\|_{\CH_2}^2&=\sum_{i=1}^r \Bcred_i^{\,\trans}\BGoneRedBar(-{\lambdared}_i)\Bbred_i +\sum_{j=1}^r\sum_{k=1}^r \Bmred_{j,k}^{\,\trans}\BGtwoRedBar(-{\lambdared}_j,-{\lambdared}_k)(\Bbred_j\otimes\Bbred_k)\\
        &=\|\BGonered\|_{\CH_2^{p\times m}(\C_+)}^2+\|\BGtwored\|_{\CH_2^{p\times m^2}(\C_+\times \C_+)}^2.
    \end{split}
    \end{align}
\end{theorem}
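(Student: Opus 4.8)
The plan is to reduce the whole statement to a single two-variable residue computation. First I would invoke Definition~\ref{def:H2normTf}: both the inner product~\cref{eq:H2ipTf} and the norm~\cref{eq:H2normTf} split additively into their linear and quadratic Hardy-space parts, so it suffices to treat each summand. For the linear part, the identity $\langle\BG_1,\BGonered\rangle_{\CH_2^{p\times m}(\C_+)}=\sum_{i=1}^r\Bc_i^{\trans}\BGonebar(-\lambdared_i)\Bb_i$ (and, specializing to $\Sys=\Sysred$, the analogous expression with $\BGoneRedBar$) is exactly the classical pole--residue formula for \LTI{} systems, e.g.~\cite[Lemma~2.1.4]{AntBG20},~\cite[Lemma~3.5]{GugAB08}, applied to the realization $\BEr=\BI_r$, $\BAr=\BD$. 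Moreover, once~\cref{eq:H2ipPoleRes} is established the norm formula~\cref{eq:H2normPoleRes} follows by taking $\Sys=\Sysred$, since $\|\Sysred\|_{\CH_2}^2=\langle\Sysred,\Sysred\rangle_{\CH_2}$. Hence everything reduces to showing
\begin{equation*}
    \langle\BG_2,\BGtwored\rangle_{\CH_2^{p\times m^2}(\C_+\times\C_+)}=\sum_{j=1}^r\sum_{k=1}^r\Bm_{j,k}^{\trans}\,\BGtwobar(-\lambdared_j,-\lambdared_k)\,(\Bb_j\otimes\Bb_k).
\end{equation*}

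Next I would substitute the pole--residue expansion of $\BGtwored$ from~\cref{eq:poleResidue} into the defining double integral in~\cref{eq:H2ipTf} and use linearity and the cyclic invariance of the trace to rewrite the integrand as the finite sum
\begin{equation*}
    \sum_{j=1}^r\sum_{k=1}^r\frac{\Bm_{j,k}^{\trans}\,\BGtwobar(-\imunit\omega_1,-\imunit\omega_2)\,(\Bb_j\otimes\Bb_k)}{(\imunit\omega_1-\lambdared_j)(\imunit\omega_2-\lambdared_k)}.
\end{equation*}
Because $\Sys$ is asymptotically stable, $\BGtwobar$ is analytic on $\imunit\R\times\imunit\R$ and decays like $\mathcal{O}(|\omega_1|^{-1}|\omega_2|^{-1})$ (using that $\BE$ is nonsingular), while each $\lambdared_j$ lies strictly in $\C_-$; hence each summand is $\mathcal{O}\big((1+\omega_1^2)^{-1}(1+\omega_2^2)^{-1}\big)$ and absolutely integrable on $\R^2$, which justifies exchanging the finite sum with the integral and evaluating the double integral as an iterated one by Fubini's theorem.

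Then I would evaluate the iterated integral one variable at a time by the residue theorem. Fix $j,k$ and $\omega_2\in\R$, and regard $s_1\mapsto\Bm_{j,k}^{\trans}\BGtwobar(-s_1,-\imunit\omega_2)(\Bb_j\otimes\Bb_k)\big/(s_1-\lambdared_j)$ as a meromorphic function: by asymptotic stability of $\Sys$ the poles of $\BGtwobar(-s_1,\cdot)$ all lie in $\C_+$, whereas the factor $(s_1-\lambdared_j)^{-1}$ contributes a single simple pole at $\lambdared_j\in\C_-$. Closing the Bromwich contour $\imunit\R$ into the left half-plane (the semicircular arc contributes nothing in the limit since the integrand is $\mathcal{O}(|s_1|^{-2})$ there) and applying the residue theorem yields
\begin{equation*}
    \frac{1}{2\pi}\int_{-\infty}^\infty\frac{\Bm_{j,k}^{\trans}\BGtwobar(-\imunit\omega_1,-\imunit\omega_2)(\Bb_j\otimes\Bb_k)}{(\imunit\omega_1-\lambdared_j)(\imunit\omega_2-\lambdared_k)}\,d\omega_1=\frac{\Bm_{j,k}^{\trans}\BGtwobar(-\lambdared_j,-\imunit\omega_2)(\Bb_j\otimes\Bb_k)}{\imunit\omega_2-\lambdared_k}.
\end{equation*}
Repeating the argument in the $\omega_2$ variable, now picking up the residue at $s_2=\lambdared_k\in\C_-$ of $s_2\mapsto\BGtwobar(-\lambdared_j,-s_2)\big/(s_2-\lambdared_k)$, produces $\Bm_{j,k}^{\trans}\BGtwobar(-\lambdared_j,-\lambdared_k)(\Bb_j\otimes\Bb_k)$; summing over $j,k$ and reinstating the linear part gives~\cref{eq:H2ipPoleRes}, and then~\cref{eq:H2normPoleRes} follows as noted above.

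I expect the main obstacle to be the careful bookkeeping of this two-variable residue calculus: one must verify that, after fixing one imaginary-axis variable, every pole of the integrand in the remaining complex variable lies strictly off $\imunit\R$ and on the anticipated side, and that the semicircular arcs genuinely vanish in each step. The hypotheses are exactly what make this work: asymptotic stability of $\Sys$ places all poles of $\BG_1,\BG_2$ (hence of $\BGonebar,\BGtwobar$) in $\C_-$, so $\BGonebar(-s)$ and $\BGtwobar(-s_1,-s_2)$ are analytic on and to the right of the imaginary axis, whereas simplicity and strict stability of the poles of $\Sysred$ put the residue-term poles strictly in $\C_-$; Cauchy's theorem then cleanly separates the two families. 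The symmetry relation~\cref{eq:qoResidueSymmetry} is not needed for this computation, though it will streamline the optimality conditions in~\Cref{sec:H2OptimalMOR}.
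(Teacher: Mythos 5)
Your proposal is correct and follows essentially the same route as the paper's proof in Appendix~\ref{app:proofPoleRes}: dispatch the linear term via the classical \LTI{} pole--residue formula, evaluate the quadratic double integral one variable at a time by closing the contour in the left half-plane (with vanishing semicircular arcs) and picking up the simple poles $\lambdared_j,\lambdared_k$ of $\BGtwored$, and obtain~\cref{eq:H2normPoleRes} by setting $\Sys=\Sysred$. The only cosmetic difference is that you substitute the pole--residue expansion~\cref{eq:poleResidue} and swap sum and integral before the first residue step, whereas the paper keeps $\BGtwored$ intact and uses~\cref{eq:poleResidue} only when evaluating $\lim_{s_1\to\lambdared_j}(s_1-\lambdared_j)\BGtwored(s_1,z)^{\trans}$; the two computations are equivalent.
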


\begin{proof}
    Derivations to prove~\Cref{thm:H2PoleRes} are conceptually intuitive yet technically intricate. 
    Therefore, we leave its presentation to~\Cref{app:proofPoleRes}.
\end{proof}

When applied to a pair of \LTI{} systems, which are a special case of~\cref{eq:lqoSys} with $\BM=\Bzero_{p\times n^2}$,~\Cref{thm:H2PoleRes} agrees with the usual expressions of the $\CH_2$ norm and inner product for \LTI{} systems; see~\cite[Lemma~2.1.4]{AntBG20} or~\cite[Lemma~3.5]{GugAB08}. 

\subsection{A review of $\CH_2$-optimal model reduction for linear time-invariant systems}
\label{sec:H2OptimalLTIMOR}

Finally, because we build upon ideas from linear $\CH_2$-optimal model reduction, and to draw comparisons later on, we briefly recall the interpolation-based $\CH_2$-optimality theory for \LTI{} systems developed in~\cite{MeiL67,GugAB08}.

Consider the \LTI{} system retrieved from~\cref{eq:lqoSys} by taking $\BM=\Bzero_{p\times n^2}$. 
In this case, the quadratic-output transfer function $\BG_2$ in~\cref{eq:lqoQuadTf} vanishes, and the system's frequency response is fully specified by $\BG_1$ in~\cref{eq:lqoLinTf}.
Then, if an asymptotically stable \LTI{}-\ROM{} with transfer function $\BGonered$ has simple poles and is $\CH_2$-optimal, it holds that
\begin{align}
    \begin{split}
    \label{eq:ltiH2OptimalInterpolationCons}
    \BG_1(-\lambdared_k)\Bbred_k=\BGonered(-\lambdared_k)\Bbred_k,&~~~\Bcred_k^{\trans}\BG_1(-\lambdared_k)=\Bcred_k^{\trans}\BGonered(-\lambdared_k),\\ 
    \mbox{and}~~\Bcred_k^{\trans}\frac{d}{ds}\BG_1(-\lambdared_k)\Bbred_k&=\Bcred_k^{\trans}\frac{d}{ds}\BGonered(-\lambdared_k)\Bbred_k,~~k=1,\ldots,r.
    \end{split}
\end{align}
In other words, the transfer function $\BGonered$ of an $\CH_2$-optimal reduced model is a tangential interpolant of $\BG_1$ at the \emph{mirror images of the reduced model poles}.
For \SISO{} systems, these were first derived by Meier and Luenberger~\cite{MeiL67} and later established for \MIMO{} systems in~\cite{GugAB08,VanGPA08,BunKVW10}.
The work~\cite{GugAB08} proposed the Iterative Rational Krylov Algorithm (\IRKA{}), a numerically efficient approach for computing locally $\CH_2$-optimal \ROM{}s. \IRKA{} iteratively constructs tangential interpolants by treating the poles of the previous reduced model iterate as fixed points.

\section{Optimal-$\CH_2$ approximation of linear quadratic-output systems by multivariate rational interpolation}
\label{sec:H2OptimalMOR}
In this section, we formally consider the $\CH_2$-optimal model reduction problem for \LQO{} systems stated in~\cref{eq:H2OptimalMORProb}.
The major theoretical result of this work in~\Cref{thm:lqoH2OptInterpolationCons} establishes first-order \emph{interpolatory} optimality conditions for the $\CH_2$ approximation of~\cref{eq:lqoSys}.
These provide a satisfying generalization of the interpolatory $\CH_2$-optimality conditions from linear model reduction~\cref{eq:ltiH2OptimalInterpolationCons}, thus establishing the analogous $\CH_2$-optimality framework for \LQO{} systems.

\subsection{Mixed-multipoint tangential interpolation conditions for $\CH_2$ optimality}

We are ready to state the principal theoretical result of the paper.

\begin{theorem}
    \label{thm:lqoH2OptInterpolationCons}
   Let $\Sys$ and $\Sysred$ be asymptotically stable \LQO{} systems as in~\cref{eq:lqoSys} and~\cref{eq:lqoSysRed} with the transfer functions $\BG_1,\BG_2$, and $~\BGonered,\BGtwored$ defined according to~\cref{eq:lqoTfs}. Also suppose that $\Sysred$ has simple poles $\lambdared_1,\ldots,\lambdared_r$. Let $\Bbred_j\in\Cm, \Bcred_j\in\Cp, \Bmred_{j,k}\in\Cp$ be the corresponding residue directions defined in~\cref{eq:resComponents}. 
    If $\Sysred$ minimizes the squared $\CH_2$ error in~\cref{eq:H2OptimalMORProb}, then $\BGonered$ and $\BGtwored$ satisfy the tangential interpolation conditions:
    \begin{subequations}
        \label{eq:H2OptConds}
        \begin{align}
            \label{eq:H1RightCon}
            \Bzero_p &=\left(\BG_1(-\lambdared_k)-\BGonered(-\lambdared_k)\right)\Bbred_k,\\
            \label{eq:H2RightCon}
            \Bzero_p&= \left(\BG_2(-\lambdared_j,-\lambdared_k)-\BGtwored(-\lambdared_j,-\lambdared_k)\right)(\Bbred_j\otimes\Bbred_k),\\
            \begin{split}
                \label{eq:H1H2MixedLeftCon}
                \Bzero_m^{\trans} &= \Bcred_k^{\,\trans}\left(\BG_1(-\lambdared_k)-\BGonered(-\lambdared_k)\right)\\
                &~~~~~~~~~+\sum_{\ell=1}^r\Bmred_{k,\ell}^{\,\trans} \left(\BG_2(-\lambdared_k,-\lambdared_{\ell})-\BGtwored(-\lambdared_k,-\lambdared_{\ell})\right)(\BI_m\otimes \Bbred_{\ell}) \\
                &~~~~~~~~~+\sum_{\ell=1}^r\Bmred_{\ell,k}^{\,\trans} \left(\BG_2(-\lambdared_{\ell},-\lambdared_k)-\BGtwored(-\lambdared_{\ell},-\lambdared_k)\right)(\Bbred_{\ell}\otimes\BI_m),
            \end{split}\\
            \begin{split}
            \label{eq:H1H2MixedHermiteCon}
                0 &= \Bcred_k^{\,\trans}\left(\frac{d}{ds}\BG_1(-\lambdared_k)-\frac{d}{ds}\BGonered(-\lambdared_k)\right)\Bbred_k\\
                &~~~~~~~~~+\sum_{{\ell=1}}^r\Bmred_{k,\ell}^{\,\trans} \left(\frac{\partial}{\partial s_1}\BG_2(-\lambdared_k,-\lambdared_{\ell})-\frac{\partial}{\partial s_1}\BGtwored(-\lambdared_k,-\lambdared_{\ell})\right)(\Bbred_k\otimes \Bbred_{\ell}) \\
                &~~~~~~~~~+\sum_{\ell=1}^r\Bmred_{\ell,k}^{\,\trans} \left(\frac{\partial}{\partial s_2}\BG_2(-\lambdared_{\ell},-\lambdared_k)-\frac{\partial}{\partial s_2}\BGtwored(-\lambdared_{\ell},-\lambdared_k)\right)( \Bbred_{\ell}\otimes\Bbred_k),
            \end{split}        \end{align}
        for all $j,k=1,\ldots,r$.
    \end{subequations}
\end{theorem}
\begin{proof}[Proof of \Cref{thm:lqoH2OptInterpolationCons}]
    Due to its length, we present the full proof of~\Cref{thm:lqoH2OptInterpolationCons} in~\Cref{app:proofH2opt}; we provide an overview here.
    Let $\Syshat$ be any order-$r$, asymptotically stable \LQO{} system defined according to~\cref{eq:lqoSysRed} that exists in a neighborhood of $\Sysred$ 
    such that $\Syshat$ is a locally sub-optimal $\CH_2$ approximation of $\Sys$. 
    Let $\BGonehat$ and $\BGtwohat$ be the transfer functions of $\Syshat$ according to~\cref{eq:lqoTfs}.
    The sub-optimality assumption along with elementary manipulations of the transfer function $\CH_2$ norms and inner products yields
    \begin{align}
        \nonumber
        \|\Sys-\Sysred\|_{\CH_2}^2\leq \|\Sys-\Syshat\|_{\CH_2}^2&=\|\BG_1-\BGonehat\|_{\CH_2^{p\times m}}^2+\|\BG_2-\BGtwohat\|_{\CH_2^{p\times m^2}}^2\\
        \begin{split}
        \label{eq:inequalityStar1}
        \Rightarrow~~~ 0&\leq2\real @\langle\BG_1-\BGonered,@\BGonered-\BGonehat\rangle_{\CH_2^{p\times m}} + \|\BGonered - \BGonehat\|_{\CH_2^{p\times m}}^2\\
        &~~~~~+ 2\real @\langle\BG_2-\BGtwored,@\BGtwored-\BGtwohat\rangle_{\CH_2^{p\times m^2}} + \|\BGtwored - \BGtwohat\|_{\CH_2^{p\times m^2}}^2.
        \end{split}
    \end{align}
    The skeleton of the argument that we use to derive each set of interpolation conditions in~\cref{eq:H2OptConds} is as follows: First, assume for the sake of contradiction that a single interpolation condition in one of~\cref{eq:H1RightCon}--\cref{eq:H1H2MixedHermiteCon} does not hold. Then, for an arbitrarily fixed $\varepsilon > 0$, take $\BGonehat$ and $\BGtwohat$ to differ from the $\CH_2$-optimal transfer functions $\BGonered$ and $\BGtwored$ by carefully selected $\varepsilon$-perturbations of the poles or residue directions (e.g., perturbing $\Bmred_{j,k}$ ultimately yields the $(j,k)$-th right-tangential Lagrange condition~\cref{eq:H2RightCon}).
    The formulae in~\Cref{thm:H2PoleRes} are then used to evaluate the norms and inner products in~\cref{eq:inequalityStar1}. 
    Finally, taking $\varepsilon>0$ to be sufficiently small yields a contradiction to the inequality in~\cref{eq:inequalityStar1}, and so the interpolation condition in question must hold. Repeating this for each of~\cref{eq:H1RightCon}--\cref{eq:H1H2MixedHermiteCon} and all $j,k=1,\ldots,r$ proves the full result.
    The full details are in~\Cref{app:proofH2opt}.
\end{proof}

\Cref{thm:lqoH2OptInterpolationCons} explicitly ties the $\CH_2$-optimal model reduction of \LQO{} systems~\cref{eq:lqoSys} with multivariate rational interpolation.
Specifically, it shows that any minimizer of the $\CH_2$ model error in~\cref{eq:H2OptimalMORProb} is necessarily a tangential interpolant of the full-order model. 
The interpolatory optimality conditions in~\cref{eq:H2OptConds} amount to: 
\begin{enumerate}
    \item The \emph{right-tangential Lagrange interpolation} of $\BG_1$ and $\BG_2$, individually;
    \item The \emph{left-tangential Lagrange interpolation} of the sum of $\BG_1$ and $\BG_2$ evaluated at all possible combinations of the optimal interpolation points;
    \item The \emph{bitangential Hermite interpolation} of the sum of $\BG_1$ and $\BG_2$ evaluated at all possible combinations of the optimal interpolation points.
\end{enumerate}
Henceforth, we refer to the conditions appearing in~\cref{eq:H1H2MixedLeftCon} and~\cref{eq:H1H2MixedHermiteCon} as \emph{mixed-multipoint} tangential interpolation conditions, given that they interpolate a linear combination (or mix) of $\BG_1$ and $\BG_2$ evaluated at multiple (and in fact, all possible) combinations of the optimal interpolation points.

How does the $\CH_2$-optimality framework prescribed by~\Cref{thm:lqoH2OptInterpolationCons} compare with analogous interpolation-based optimality frameworks in linear and nonlinear model reduction?
As with the $\CH_2$-optimal model reduction of \LTI{}~\cite{GugAB08,VanGPA08}, bilinear~\cite{FlaG15}, and quadratic-bilinear~\cite{CaoEtal22} systems, the optimal interpolation points from~\Cref{thm:lqoH2OptInterpolationCons} are the \emph{mirror images of the reduced model poles reflected across the imaginary axis}; the optimal tangential directions are the residue directions~\cref{eq:resComponents} associated with these poles. 
Moreover, when applied to a pair of \LTI{} systems with only linear outputs,
then the quadratic-output transfer functions $\BG_2$ and $\BGtwored$ vanish, and \Cref{thm:lqoH2OptInterpolationCons} yields the familiar interpolation-based first-order optimality conditions~\cref{eq:ltiH2OptimalInterpolationCons} from \LTI{}-\MOR{}.
Alternatively, the mixed-multipoint tangential conditions in~\cref{eq:H2OptConds} can be viewed as respecting the external Volterra series representation~\cref{eq:lqoConv} of the underlying system. This is referred to as \emph{multipoint Volterra series interpolation} in the $\CH_2$-optimal \MOR{} of bilinear and quadratic-bilinear systems; see~\cite{FlaG15,Fla12} and~\cite{CaoEtal22} for further details.

\subsection{Enforcing the necessary optimality conditions of~\Cref{thm:lqoH2OptInterpolationCons} by projection}\label{ss:enforcingInterp}

For the time being, suppose that the optimal interpolation data (the poles and residue directions of an \LQO{} system~\cref{eq:lqoSysRed} that minimizes the $\CH_2$ error in~\cref{eq:H2OptimalMORProb}) are given. Can the interpolation-based optimality conditions of~\Cref{thm:lqoH2OptInterpolationCons} be enforced by Petrov-Galerkin projection? 
From~\cite[Theorem~3.2]{Reietal25}, it is known that any $\CH_2$-optimal \ROM{} of the form~\cref{eq:lqoSysRed} is necessarily obtained via projection.
As an immediate consequence, the interpolatory $\CH_2$-optimal \ROM{}s characterized by~\Cref{thm:lqoH2OptInterpolationCons} are projection-based, as well. However, it is not \emph{a priori} clear how to enforce all of the $3r+r^2$ interpolation conditions in~\cref{eq:H2OptConds} simultaneously by an appropriate choice of model reduction bases $\BVr$ and $\BWr$. 
It is shown in~\cite[Cor.~1]{DiazHGA23} how to enforce the right-tangential Lagrange conditions~\cref{eq:H1RightCon} and~\cref{eq:H2RightCon}, but not the newly derived mixed-multipoint conditions~\cref{eq:H1H2MixedLeftCon} and~\cref{eq:H1H2MixedHermiteCon} that are necessary for optimality.
In the subsequent result, we prove how to enforce \emph{all} of the necessary interpolation conditions simultaneously by explicit construction of $\BV$ and $\BW$ in~\cref{eq:projMor}.

\begin{theorem}
\label{thm:enforceMixedInterp}  
    Let $\Sys$ and $\Sysred$ be \LQO{} systems as in~\cref{eq:lqoSys} and~\cref{eq:lqoSysRed} with the transfer functions $\BG_1,\BG_2$, and $~\BGonered,\BGtwored$ defined according to~\cref{eq:lqoTfs}.
    Consider interpolation points $\sigma_1,\ldots,\sigma_r\in\C$ such that $\sigma_k\BE-\BA$ and $\sigma_k\BEr-\BAr$ are invertible for all $k=1,\ldots,r$, right-tangential directions $\Br_1,\ldots,\Br_r\in\Cm$, and left-tangential directions $\Bl_1,\ldots,\Bl_r\in\Cp$ and $\Bq_{1,1}, \ldots, \Bq_{r,r}\in\Cp$ such that $\Bq_{j,k}=\Bq_{k,j}$ for all $j,k=1,\ldots,r$.
    Suppose that $\BV\in\Cnr$ and $\BW\in\Cnr$ have full rank and satisfy
    \begin{align}
    \label{eq:interpBasisVr}
       \Bv_k\defeq \left(\sigma_k\BE-\BA\right)^{-1}\BB\Br_k&\in\range\left(\BV\right),\\
    \label{eq:interpBasisWr}
        \Bw_k\defeq\left(\sigma_k\BE^{\trans}-\BA^{\trans}\right)^{-1}\left(2\sum_{\ell=1}^r\BN_\ell\,\Bq_{k,\ell} + \BC^{\trans}\Bl_k\right)&\in\range(\BW),
    \end{align}
    for all $k=1,\ldots,r$, where $\BN_{\ell}\defeq\begin{bmatrix}
                \BM_1\Bv_\ell & \cdots & \BM_p\Bv_\ell
            \end{bmatrix}\in\C^{n\times p}$.
    Then, if $\Sysred$ is computed by Petrov-Galerkin projection~\cref{eq:projMor} using $\BVr$ and $\BWr$ satisfying~\cref{eq:interpBasisVr} and~\cref{eq:interpBasisWr},
    its transfer functions $\BGonered$ and $\BGtwored$ satisfy the tangential interpolation conditions:
    \begin{subequations}
        \label{subeq:genInterpolationCons}
        \begin{align}
            \label{eq:linRightTangential}
            \Bzero_p &=\left(\BG_1(\sigma_k)-\BGonered(\sigma_k)\right)\Br_k,  \\
            \label{eq:quadRightTangential}
            \Bzero_p &= \left(\BG_2(\sigma_j,\sigma_k)-\BGtwored(\sigma_j,\sigma_k)\right)\left(\Br_j\otimes\Br_k\right),\\
            \begin{split}
                \label{eq:MixedLeftTangential}
                \Bzero_m^{\trans} &= \Bl_k^{\trans}\left(\BG_1(\sigma_k)-\BGonered(\sigma_k)\right)+\sum_{\ell=1}^r\Bq_{k,\ell}^{\trans} \left(\BG_2(\sigma_k,\sigma_\ell)-\BGtwored(\sigma_k,\sigma_\ell)\right)\left(\BI_m\otimes \Br_\ell\right) \\
                &~~~~~~~~~+\sum_{\ell=1}^r\Bq_{\ell,k}^{\trans} \left(\BG_2(\sigma_\ell,\sigma_k)-\BGtwored(\sigma_\ell,\sigma_k)\right)\left(\Br_\ell\otimes\BI_m\right),
            \end{split} \\
            \begin{split}
                \label{eq:MixedHermiteBiTangential}
                0 &= \Bl_k^{\trans}\left(\frac{d}{ds}\BG_1(\sigma_k)-\frac{d}{ds}\BGonered(\sigma_k)\right)\Br_k\\
                &~~~~~~~~~+\sum_{\ell=1}^r\Bq_{k,\ell}^{\trans} \left(\frac{\partial}{\partial s_1}\BG_2(\sigma_k,\sigma_\ell)-\frac{\partial}{\partial s_1}\BGtwored(\sigma_k,\sigma_\ell)\right)\left(\Br_k\otimes \Br_\ell\right) \\
                &~~~~~~~~~+\sum_{\ell=1}^r\Bq_{\ell,k}^{\trans} \left(\frac{\partial}{\partial s_2}\BG_2(\sigma_\ell,\sigma_k)-\frac{\partial}{\partial s_2}\BGtwored(\sigma_\ell,\sigma_k)\right)\left(\Br_\ell\otimes\Br_k\right),
            \end{split}
        \end{align}
        \end{subequations}
        for all $j,k=1,\ldots,r$.
\end{theorem}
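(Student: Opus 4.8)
The plan is to verify the four families of interpolation conditions in~\cref{subeq:genInterpolationCons} by exploiting the standard projection-interpolation machinery, adapted to account for the quadratic-output transfer function $\BG_2$. Throughout, write $P \defeq \BVr\big(\BWr^{\trans}\BE\BVr\big)^{-1}\BWr^{\trans}\BE$ for the oblique projector onto $\range(\BVr)$ along $\range(\BWr)^{\perp}$ induced by the Petrov-Galerkin projection~\cref{eq:projMor}, and note the key elementary identities: for any $s$ with $s\BE-\BA$ and $s\BEr-\BAr$ invertible, $(s\BEr-\BAr)^{-1}\BBr = \BWr^{\trans}(s\BE-\BA)P\,(s\BE-\BA)^{-1}\BB$-type relations, so that whenever $(s\BE-\BA)^{-1}\BB\,\Bv$ lies in $\range(\BVr)$ one gets the exact primal match $(s\BE-\BA)^{-1}\BB\Bv = \BVr(s\BEr-\BAr)^{-1}\BBr\Bv$, and the dual analogue when a vector lies in $\range(\BWr)$. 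First I would dispatch the two right-tangential Lagrange conditions~\cref{eq:linRightTangential} and~\cref{eq:quadRightTangential}: since $\Bv_k = (\sigma_k\BE-\BA)^{-1}\BB\Br_k \in \range(\BVr)$ by~\cref{eq:interpBasisVr}, the primal state is captured exactly at each $\sigma_k$, hence $\BG_1(\sigma_k)\Br_k = \BC\Bv_k = \BCr(\sigma_k\BEr-\BAr)^{-1}\BBr\Br_k = \BGonered(\sigma_k)\Br_k$; for $\BG_2$, using $\BMr = \BM(\BVr\otimes\BVr)$ (implemented as in~\cref{eq:altQOProj}) and the mixed product property~\cref{eq:mixedProdProperty}, $\BG_2(\sigma_j,\sigma_k)(\Br_j\otimes\Br_k) = \BM(\Bv_j\otimes\Bv_k)$, and since $\Bv_j,\Bv_k\in\range(\BVr)$ this equals $\BMr(\,(\sigma_j\BEr-\BAr)^{-1}\BBr\Br_j \otimes (\sigma_k\BEr-\BAr)^{-1}\BBr\Br_k) = \BGtwored(\sigma_j,\sigma_k)(\Br_j\otimes\Br_k)$. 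This part is routine and parallels~\cite[Cor.~1]{DiazHGA23}.

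The substance of the proof is the mixed-multipoint left-tangential condition~\cref{eq:MixedLeftTangential}. The idea is that the particular combination $\Bw_k$ in~\cref{eq:interpBasisWr} is engineered precisely so that the left-projection it induces produces the weighted sum of a linear-output residual and two quadratic-output residuals. Concretely, I would introduce the error (residual) vectors $\Be_1(s)\defeq \big((s\BE-\BA)^{-1} - \BVr(s\BEr-\BAr)^{-1}\BWr^{\trans}\big)\BB$ (a primal state residual, which already vanishes after application on $\Br_k$ at $s=\sigma_k$ by the Lagrange matching above, but not for a general direction), and observe that each of the three transfer-function differences appearing in~\cref{eq:MixedLeftTangential} can be written as $\BZ\,\Be_1(\cdot)$ for the appropriate "left" matrix $\BZ$ ($\BC$, or a $\BM$-contraction against $\Bv_\ell$). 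Then I would compute $\Bw_k^{\trans}\BE\,(\,\cdot\,)$: by~\cref{eq:interpBasisWr}, $(\sigma_k\BE^{\trans}-\BA^{\trans})\Bw_k = 2\sum_\ell [\BM_1\Bv_\ell\ \cdots\ \BM_p\Bv_\ell]\Bq_{k,\ell} + \BC^{\trans}\Bl_k$, so that left-multiplying a resolvent $(\sigma_k\BE-\BA)^{-1}$ by $\Bw_k^{\trans}$ converts into $(2\sum_\ell [\cdots]\Bq_{k,\ell} + \BC^{\trans}\Bl_k)^{\trans}(\sigma_k\BE-\BA)^{-1}$. The factor $2$ and the symmetry $\Bq_{j,k}=\Bq_{k,j}$ are what merge the two Kronecker-slot contractions $(\BI_m\otimes\Br_\ell)$ and $(\Br_\ell\otimes\BI_m)$ into a single term via the symmetry identity~\cref{eq:qoMatSymmetry} / \Cref{lemma:qoTfSymmetry}: contracting $\BM_\ell\Bv_{\ell'}$ against $\Bq_{k,\ell'}$ and summing over $\ell'$, applied to $\BM(\Bv\otimes\Bv_{\ell'})$, reproduces both $\sum_\ell \Bq_{k,\ell}^{\trans}\BG_2(\sigma_k,\sigma_\ell)(\BI_m\otimes\Br_\ell)$ and $\sum_\ell \Bq_{\ell,k}^{\trans}\BG_2(\sigma_\ell,\sigma_k)(\Br_\ell\otimes\BI_m)$ at once, since each slot contributes a factor one and the symmetry of $\BM_\ell$ makes the two slots equal. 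Carrying this through, the right-hand side of~\cref{eq:MixedLeftTangential} collapses to $\Bw_k^{\trans}$ applied to a primal residual that lies in $\ker(\BWr^{\trans})$-type subspace by the Galerkin condition — hence it vanishes. I would organize this as: (i) express all three difference terms through the primal error operator; (ii) substitute the defining relation for $\Bw_k$; (iii) invoke the Galerkin orthogonality $\BWr^{\trans}\BE\big((\cdot)\BE-\BA\big)^{-1}$-annihilation together with $\Bw_k\in\range(\BWr)$ to kill the remaining term.

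Finally, the bi-tangential Hermite condition~\cref{eq:MixedHermiteBiTangential} follows by the same mechanism applied at the derivative level: once both $\Bv_k\in\range(\BVr)$ and $\Bw_k\in\range(\BWr)$ with matching shift $\sigma_k$, the standard "two-sided projection gives Hermite interpolation" argument upgrades each first-order Lagrange match to a first-order derivative match. For the linear part this is the classical fact; for the quadratic part one differentiates the identity $\BG_2(s_1,s_2)(\Br_j\otimes\Br_k)=\BM(\,(s_1\BE-\BA)^{-1}\BB\Br_j\otimes(s_2\BE-\BA)^{-1}\BB\Br_k)$ in $s_1$, notes that $\partial_{s_1}(s_1\BE-\BA)^{-1}\BB\Br_j = -(s_1\BE-\BA)^{-1}\BE(s_1\BE-\BA)^{-1}\BB\Br_j$ still lies in $\range(\BVr)$ only after a further left-contraction, and pairs it against $\Bw_k$ exactly as before; the derivative structure threads through the $\Bq_{k,\ell}$ weights the same way, with the factor $2$ and symmetry again merging the $\partial_{s_1}$ and $\partial_{s_2}$ terms, and the Galerkin condition annihilating the leftover. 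I expect the main obstacle to be bookkeeping in step (ii) of the left-tangential case: one must track carefully how the Kronecker contractions $(\BI_m\otimes\Br_\ell)$ versus $(\Br_\ell\otimes\BI_m)$ interact with the "$[\BM_1\Bv_\ell\ \cdots\ \BM_p\Bv_\ell]$" matrix in~\cref{eq:interpBasisWr} — in particular verifying that $(\,[\BM_1\Bv_\ell\ \cdots\ \BM_p\Bv_\ell]\Bq_{k,\ell})^{\trans}\Bz = \Bq_{k,\ell}^{\trans}\BM(\Bz\otimes\Bv_\ell) = \Bq_{k,\ell}^{\trans}\BM(\Bv_\ell\otimes\Bz)$ for any $\Bz$, which is exactly~\cref{eq:qoMatSymmetry} specialized appropriately — and confirming that the doubling $2\sum_\ell$ is exactly what is needed so that after this identification the two quadratic sums in~\cref{eq:MixedLeftTangential} emerge with the correct coefficients rather than off by a factor of two. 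Everything else is a matter of combining resolvent identities with Galerkin orthogonality, which is standard.
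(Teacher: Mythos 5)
Your proposal is correct and takes essentially the same route as the paper: the right-tangential conditions follow from the primal inclusion~\cref{eq:interpBasisVr} (as in~\cite{DiazHGA23}), the symmetry of Lemma~\ref{lemma:qoTfSymmetry} together with $\Bq_{j,k}=\Bq_{k,j}$ collapses the mixed conditions to a single sum with the factor $2$, and the construction of $\Bw_k$ plus $\Bw_k\in\range(\BW)$ makes the full- and reduced-order sides both reduce to $\Bw_k^{\trans}\BB$ (respectively $-\Bw_k^{\trans}\BE\Bv_k$). Your residual/Petrov--Galerkin-orthogonality framing is only a repackaging of the paper's chain of identities for $\wtBv_k$ and $\wtBw_k$, and the bookkeeping identity you flag, $\left(\begin{bmatrix}\BM_1\Bv_\ell & \cdots & \BM_p\Bv_\ell\end{bmatrix}\Bq_{k,\ell}\right)^{\trans}\Bz=\Bq_{k,\ell}^{\trans}\BM\left(\Bz\otimes\Bv_\ell\right)=\Bq_{k,\ell}^{\trans}\BM\left(\Bv_\ell\otimes\Bz\right)$, is exactly the ingredient (via~\cref{eq:qoMatSymmetry} and the symmetry of each $\BM_i$) that the paper's proof uses.
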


\begin{proof}[Proof of \Cref{thm:enforceMixedInterp}]
\allowdisplaybreaks
    The construction of $\BV$ alone yields the conditions~\cref{eq:linRightTangential} and \cref{eq:quadRightTangential}. A proof of this fact can be found in~\cite[Cor.~1]{DiazHGA23} and it is thereby omitted here.
    Define $\Bvarphi(s)\defeq s\BE-\BA$ and $\wtBvarphi(s)\defeq s\BEr-\BAr$.
    To prove the remaining conditions~\cref{eq:MixedLeftTangential} and~\cref{eq:MixedHermiteBiTangential}, we first derive two intermediate identities \cref{eq:interpId1} and~\cref{eq:interpId2}.
    By the construction of $\BV\in\Cnr$ in~\cref{eq:interpBasisVr} and the assumption that $\BVr$ is full rank, there exists $\wtBv_k\in\Cr$ so that $\BVr\wtBv_k=\Bv_k=\Bvarphi(\sigma_k)^{-1}\BB\Br_k$ and
    \begin{align}
        \nonumber
        \wtBvarphi(\sigma_k)\wtBv_k
        &=\BWr^{\trans}\Bvarphi(\sigma_k)\BVr\wtBv_k=\BW^{\trans}\Bvarphi(\sigma_k)\Bvarphi(\sigma_k)^{-1}\BB\Br_k=\BBr\Br_k\\
        \label{eq:interpId1}
        \Rightarrow~~\wtBv_k&=\wtBvarphi(\sigma_k)^{-1}\BBr\Br_k.
    \end{align}
    Similarly, by the construction of $\BW\in\Cnr$ in~\cref{eq:interpBasisWr} and the assumption that $\BWr$ is full rank, there exists $\wtBw_k\in\Cr$ so that $\BW\wtBw_k=\Bw_k$ and 
    \begin{align*}
        \wtBw_k^{\trans}\wtBvarphi(\sigma_k)&=\wtBw_k^{\trans}\BW^{\trans}\Bvarphi(\sigma_k)\BV=\Bl_k^{\trans}\BC\Bvarphi(\sigma_k)^{-1}\Bvarphi(\sigma_k)\BV+ 2\sum_{\ell=1}^r\Bq_{k,\ell}^{\trans}\BN_{\ell}^{\trans}\Bvarphi(\sigma_k)^{-1}\Bvarphi(\sigma_k)\BV\\
        \Rightarrow~~\wtBw_k^{\trans}&=\Bl_k^{\trans}\BCr\wtBvarphi(\sigma_k)^{-1}+2\sum_{\ell=1}^r\Bq_{k,\ell}^{\trans}\BN_{\ell}^{\trans}\BV\wtBvarphi(\sigma_k)^{-1} .
    \end{align*}
    To simplify this expression further, we note that, for each $\ell=1,\ldots,r$ and $j=1,\ldots,p$
    \begin{align*}
        \Bv_{\ell}^{\trans}\BM_j\BV\wtBvarphi(\sigma_k)^{-1}&=\wtBv_{\ell}^{\trans}\BV^{\trans} \BM_j\BV\wtBvarphi(\sigma_k)^{-1}=\Br_{\ell}^{\trans}\BBr^{\trans}\wtBvarphi(\sigma_{\ell})^{-\trans}\BMr_j\wtBvarphi(\sigma_k)^{-1}\\
        &=\vecm\left(\BMr_j\right)^{\trans}\left(\wtBvarphi(\sigma_k)^{-1}\otimes\wtBvarphi(\sigma_{\ell})^{-1}\BBr\Br_{\ell}\right)~\,\mbox{by~\cref{eq:interpId1} and~\cref{eq:vecKronId}.}
    \end{align*}
    Thus, by definition of $\BN_\ell$ and $\BMr$ according to~\cref{eq:altQO}, it follows that
    \begin{align*}
        \BN_{\ell}^{\trans}\BV\wtBvarphi(\sigma_k)^{-1}=\begin{bmatrix}
            \Bv_{\ell}^{\trans}\BM_1\BV\wtBvarphi(\sigma_k)^{-1}\\
            \vdots\\
            \Bv_{\ell}^{\trans}\BM_p\BV\wtBvarphi(\sigma_k)^{-1}
        \end{bmatrix}=
        \BMr
        \left(\wtBvarphi(\sigma_k)^{-1}\otimes\wtBvarphi(\sigma_{\ell})^{-1}\BBr\Br_{\ell}\right).
    \end{align*}
    Plugging the above expression into the one for $\wtBw_k$, we ultimately have that
    \begin{equation}
    \label{eq:interpId2}
        \wtBw_k^{\trans} = \Bl_k^{\trans}\BCr\wtBvarphi(\sigma_k)^{-1} + 2\sum_{\ell=1}^r\Bq_{k,\ell}^{\trans}\,\BMr\left(\wtBvarphi(\sigma_k)^{-1}\otimes\wtBvarphi(\sigma_\ell)^{-1}\BBr\Br_\ell\right).
    \end{equation}
    By construction of~\cref{eq:interpBasisWr}, an analogous expression holds for $\Bw_{k}^{\trans}$ (without the tildes).
    We are now prepared to prove that~\cref{eq:MixedLeftTangential} holds under the stated assumptions. 
    By \Cref{lemma:qoTfSymmetry} and the assumption that $\Bq_{\ell,k}=\Bq_{k,\ell}$ for all $\ell,k$, we have that $$\Bq_{k,\ell}^{\trans}\BGtwored(\sigma_k,\sigma_\ell)\left(\BI_m\otimes\Br_\ell\right)=\Bq_{\ell,k}^{\trans}\BGtwored(\sigma_\ell,\sigma_k)\left(\Br_\ell\otimes\BI_m\right)~~\mbox{for all}~~\ell,k=1,\ldots,r.$$
    A similar equality holds for $\BG_2$.
    Then, the conditions in~\cref{eq:MixedLeftTangential} simplify to
    \begin{equation}
    \label{eq:simplifiedMixedLeftTangential}
        \Bzero_m^{\trans} = \Bl_k^{\trans}\left(\BG_1(\sigma_k)-\BGonered(\sigma_k)\right)+2\sum_{\ell=1}^r\Bq_{k,\ell}^{\trans} \left(\BG_2(\sigma_k,\sigma_\ell)-\BGtwored(\sigma_k,\sigma_\ell)\right)\left(\BI_m\otimes \Br_\ell\right).
    \end{equation}
    It thus suffices to prove~\cref{eq:simplifiedMixedLeftTangential} to show~\cref{eq:MixedLeftTangential}.
    Applying~\cref{eq:mixedProdProperty} twice, it follows that 
    \begin{align*}
        \BGtwored(\sigma_k,\sigma_\ell)\left(\BI_m\otimes\Br_\ell\right)&=\BMr\left(\wtBvarphi(\sigma_k)^{-1}\BBr\otimes\wtBvarphi(\sigma_{\ell})^{-1}\BBr\right)\left(\BI_m\otimes\Br_\ell\right)\\
        &=\BMr\left(\wtBvarphi(\sigma_k)^{-1}\otimes\wtBvarphi(\sigma_{\ell})^{-1}\BBr\Br_\ell\right)\BBr.
    \end{align*}
    An analogous equality holds for $\BG_2(\sigma_k,\sigma_\ell)\left(\BI_m\otimes\Br_\ell\right)$. Substituting for these equalities directly in the right-hand side of~\cref{eq:simplifiedMixedLeftTangential} yields
    \begin{align*}
        &\Bl_k^{\trans}\left(\BG_1(\sigma_k)-\BGonered(\sigma_k)\right)+2\sum_{\ell=1}^r\Bq_{k,\ell}^{\trans} \left(\BG_2(\sigma_k,\sigma_\ell)-\BGtwored(\sigma_k,\sigma_\ell)\right)\left(\BI_m\otimes \Br_\ell\right)\\
        &=\Bl_k^{\trans}\left(\BC\Bvarphi(\sigma_k)^{-1}\BB-\BCr\wtBvarphi(\sigma_k)^{-1}\BBr\right)
        +2\sum_{\ell=1}^r\Bq_{k,\ell}^{\trans}\bigg(\BM\left(\Bvarphi(\sigma_k)^{-1}\otimes\Bvarphi(\sigma_{\ell})^{-1}\BB\Br_\ell\right)\BB\\
        &~~~~~~~~~~~~-\BMr\left(\wtBvarphi(\sigma_k)^{-1}\otimes\wtBvarphi(\sigma_{\ell})^{-1}\BBr\Br_\ell\right)\BBr\bigg)\\
        &=\left(\Bl_k^{\trans}\BC\Bvarphi(\sigma_k)^{-1} + 2\sum_{\ell=1}^r\Bq_{k,\ell}^{\trans}\BM\left(\Bvarphi(\sigma_k)^{-1}\otimes\Bvarphi(\sigma_\ell)^{-1}\BB\Br_\ell\right)\right)\BB\\
        &~~~~~~~~~~~~-\left(\Bl_k^{\trans}\BCr\wtBvarphi(\sigma_k)^{-1}+2\sum_{\ell=1}^r\Bq_{k,\ell}^{\trans}\,\BMr\left(\wtBvarphi(\sigma_k)^{-1}\otimes\wtBvarphi(\sigma_\ell)^{-1}\BBr\Br_\ell\right) \right)\BBr\\
        &=\Bw_k^{\trans}\BB-\wtBw_k^{\trans}\BBr=\wtBw_k^{\trans}\BW^{\trans}\BB-\wtBw_k^{\trans}\BBr=\Bzero_m^{\trans},
    \end{align*}
    where the final equality follows from~\cref{eq:interpId2} and the fact that $\Bw_k=\BW\wtBw_k$.
    
    Similarly, we show~\cref{eq:MixedHermiteBiTangential}.
    By~\Cref{lemma:qoTfSymmetry}, we have that
    \begin{equation*}
        \Bq_{k,\ell}^{\trans}\tfrac{\partial}{\partial s_1}\BGtwored(\sigma_k,\sigma_\ell)(\Br_k\otimes\Br_\ell)=\Bq_{\ell,k}^{\trans}\tfrac{\partial}{\partial s_2}\BGtwored(\sigma_\ell,\sigma_k)(\Br_\ell\otimes\Br_k)~~\mbox{for all}~~\ell,k=1,\ldots,r
    \end{equation*}
    and likewise for $\BG_2$. Thus, the conditions in~\cref{eq:MixedHermiteBiTangential} simplify to
    \begin{align}
    \begin{split}
    \label{eq:simplifiedMixedHermiteBitangential}
        0 &= \Bl_k^{\trans}\left(\frac{d}{ds}\BG_1(\sigma_k)-\frac{d}{ds}\BGonered(\sigma_k)\right)\Br_k\\
        &~~~~~~~~~+2\sum_{\ell=1}^r\Bq_{k,\ell}^{\trans} \left(\frac{\partial}{\partial s_1}\BG_2(\sigma_k,\sigma_\ell)-\frac{\partial}{\partial s_1}\BGtwored(\sigma_k,\sigma_\ell)\right)\left(\Br_k\otimes \Br_\ell\right).
    \end{split}
    \end{align}
    It suffices to prove~\cref{eq:simplifiedMixedHermiteBitangential} in place of~\cref{eq:MixedHermiteBiTangential}.
    The right-hand side of~\cref{eq:simplifiedMixedHermiteBitangential} can be rewritten as 
    \begin{align*}
        &-\Bl_k^{\trans}\left(\BC\Bvarphi(\sigma_k)^{-1}\BE\Bvarphi(\sigma_k)^{-1}\BB-\BCr\wtBvarphi(\sigma_k)^{-1}\BEr\wtBvarphi(\sigma_k)^{-1}\BBr\right)\Br_k\\
        &~~~~~~~~~-2\sum_{\ell=1}^r\Bq_{k,\ell}^{\trans}\bigg(\BM\left(\Bvarphi(\sigma_k)^{-1}\otimes\Bvarphi(\sigma_\ell)^{-1}\BB\Br_\ell\right)\BE
        \Bvarphi(\sigma_k)^{-1}\BB\\
        &~~~~~~~~~-\BMr\left(\wtBvarphi(\sigma_k)^{-1}\otimes\wtBvarphi(\sigma_\ell)^{-1}\BBr\Br_\ell\right)\BEr
        \wtBvarphi(\sigma_k)^{-1}\BBr\bigg)\Br_k\\
        &=\bigg(\Bl_k^{\trans}\BCr\wtBvarphi(\sigma_k)^{-1}+2\sum_{\ell=1}^r\Bq_{k,\ell}^{\trans}\,\BMr\left(\wtBvarphi(\sigma_k)^{-1}\otimes\wtBvarphi(\sigma_\ell)^{-1}\BBr\Br_{\ell}\bigg) \right)\BEr\wtBvarphi(\sigma_k)^{-1}\BBr\Br_k\\
        &~~~~~~~~~-\bigg(\Bl_k^{\trans}\BC\Bvarphi(\sigma_k)^{-1}+2\sum_{\ell=1}^r\Bq_{k,\ell}^{\trans}\BM\left(\Bvarphi(\sigma_k)^{-1}\otimes\Bvarphi(\sigma_\ell)^{-1}\BB\Br_\ell\right)\bigg){\BE}
        \Bvarphi(\sigma_k)^{-1}\BB\Br_k\\
        &= \wtBw_k^{\trans}\BEr\wtBv_k - \wtBw_k^{\trans}\BW^{\trans}\BE\BV\wtBv_k= 0.
    \end{align*}
    The last line follows by construction of $\Bv_k$ and $\Bw_k$, and by applying~\cref{eq:interpId2} and~\cref{eq:interpId1}. This completes the proof.
\end{proof} 

We call the bases $\BV$ and $\BW$ that satisfy the hypotheses of~\Cref{thm:enforceMixedInterp} \emph{interpolatory} model reduction bases.
In a vacuum,~\Cref{thm:enforceMixedInterp} offers a new strategy for the interpolatory model reduction of \LQO{} systems by imposing the mixed-multipoint tangential interpolation conditions in~\cref{eq:MixedLeftTangential} and~\cref{eq:MixedHermiteBiTangential}.
We note that, for the choice of $\Bq_{j,k}=\Bmred_{j,k}$, the symmetry hypothesis imposed on the left-tangential directions $\Bq_{j,k}$ by~\Cref{thm:enforceMixedInterp} is trivially satisfied due to~\cref{eq:qoResidueSymmetry}.
Thus, with regard to $\CH_2$-optimal model reduction, if we choose the interpolation data in~\Cref{thm:enforceMixedInterp} to be $\sigma_k=-\lambdared_k$, $\Br_k=\Bbred_k$, $\Bl_k=\Bcred_k$, and $\Bq_{j,k}=\Bmred_{j,k}$ (the poles and residue directions of a system that minimizes the $\CH_2$ model error) then the first-order optimality conditions from~\Cref{thm:lqoH2OptInterpolationCons} will be satisfied by the \ROM{}.
Of course, this assumes having access to the optimal reduced model. We resolve this circular causality issue in the next section. 

\begin{remark}
    One could modify \Cref{thm:enforceMixedInterp} to use general \emph{left} and \emph{right} interpolation points $\mu_k$ and $\sigma_k$ for which $\sigma_k \neq \mu_k$.
    In this case,~\eqref{eq:MixedLeftTangential} will involve $\mu_k$ and the mixed Hermite bitangential condition~\eqref{eq:MixedHermiteBiTangential} will no longer hold because its derivation requires that $\sigma_k = \mu_k$. This is analogous to the \LTI{} case; see~\cite[Thm.~3.3.1]{AntBG20}.
    We do not provide specific details because our proposed $\CH_2$-optimality framework requires choosing the same interpolation points in the construction of $\BV$ and $\BW$.
\end{remark}

\section{A computational framework for interpolatory optimal-$\CH_2$ approximation of linear quadratic-output systems}
\label{sec:CompFramework}

As illustrated by~\Cref{thm:lqoH2OptInterpolationCons}, the optimal selection of interpolation points and tangential directions requires \emph{a priori} knowledge of an $\CH_2$-optimal reduced model, which is impractical. In this section, we introduce an algorithm for automatically determining the optimal interpolation data and enforcing the corresponding $\CH_2$-optimality conditions~\cref{eq:H2OptConds} in an iterative fashion.
We then discuss various practical aspects of the algorithm.

\subsection{The iterative rational Krylov algorithm for optimal-$\CH_2$ approximation of linear quadratic-output systems}
\label{sec:lqoIrka}

Because the optimal interpolation data depend explicitly on the unknown $\CH_2$-optimal reduced model, the optimality conditions in~\cref{eq:H2OptConds} cannot be enforced in a single projection step using the $\BV$ and $\BW$ in~\Cref{thm:enforceMixedInterp}. 
Instead, an iterative procedure is required to enforce these optimality conditions. This situation is conceptually similar to the purely linear $\CH_2$-optimal \MOR{} problem, and suggests a natural extension of the \emph{iterative rational Krylov algorithm} (\IRKA{}) from~\cite{GugAB08} to the $\CH_2$-optimal \MOR{} of \LQO{} systems. 

\begin{algorithm2e}[t!]
  \SetAlgoHangIndent{1pt}
  \DontPrintSemicolon
  \caption{Linear quadratic-output iterative rational Krylov algorithm (\LQOIRKA{}).}
  \label{alg:lqoIrka}

  \KwIn{$\BE,\BA,\BB,\BC,\BM_1,\ldots,\BM_p$ from~\cref{eq:lqoSys}, order $r$ ($1\leq r<n$,) tolerance $\tau>0$, max number of iterations $M\geq 1$, initial interpolation data $\sigma_1,\ldots,\sigma_r\in\C$,  $\Br_1,\ldots,\Br_r\in\Cm$, $\Bl_1,\ldots,\Bl_r\in\Cp$, $\Bq_{1,1},\ldots,\Bq_{r,r}\in\Cp$ closed under complex conjugation such that $\sigma_k\BE-\BA$ is invertible and $\Bq_{j,k}=\Bq_{k,j}$ for all $j,k=1,\ldots,r$.}
  \vspace{1mm}
  \KwOut{$\BEr,\BAr,\BBr,\BCr,\BMr_1,\ldots,\BMr_p$~--~state-space matrices of~\cref{eq:lqoSysRed}.}
  \vspace{1mm}
  Iteration count $i = 0$.\;
  \While{max change in $(\lambdared_k)>\tau$ and $i \leq M$}{
    Compute interpolatory model reduction bases $\BVr,\BWr\in\Rnr$ according to~\Cref{lemma:realValuedBases} such that, for each $k=1,\ldots,r$
    \begin{align*}
        \Bv_k= \left(\sigma_k\BE-\BA\right)^{-1}\BB\Br_k&\in\range\left(\BV\right),\\
        \left(\sigma_k\BE^{\trans}-\BA^{\trans}\right)^{-1}\left(2\sum_{\ell=1}^r\BN_{\ell}\,\Bq_{k,\ell} + \BC^{\trans}\Bl_k\right)&\in\range(\BW).
    \end{align*}
    \vspace{-4mm}\;
    Orthonormalize bases $\BVr \gets \text{orth}(\BVr)$ and $\BWr \gets \text{orth}(\BWr).$
    \;
    Compute reduced-order matrices by Petrov-Galerkin projection:
    \begin{equation*}
        \begin{aligned}
        &\BEr\gets\BW^{\trans}\BE\BV,~~&\BAr\gets&~\BW^{\trans}\BA\BV, ~~&\BBr\gets\BW^{\trans}\BB,\\
        &\BCr\gets\BC\BV,~~&\BMr_k\gets&~\BV^{\trans}\BMk\BV,~~&k=1,\ldots,p. 
        \end{aligned}
    \end{equation*}
    \vspace{-4mm}\;
    Compute $\lambdared_k\in\C$ and $\Bbred_k\in\Cm$, $\Bcred_k\in\Cp$, $\Bmred_{j,k}\in\Cp$ according to~\cref{eq:resComponents} from the eigendecomposition of $s\BEr-\BAr$; update the interpolation data
    \begin{equation*}
        \sigma_k\gets -\lambdared_k,~~~\Br_k\gets\Bbred_k,~~~\Bl_k\gets\Bcred_k,~~~\Bq_{j,k}\gets\Bmred_{j,k}.
    \end{equation*}
    \vspace{-4mm}\;
    Set $i\gets i + 1$.
  }
\end{algorithm2e}

The resulting computational procedure, which we present in~\Cref{alg:lqoIrka} and call the \emph{linear quadratic-output iterative rational Krylov algorithm} (\LQOIRKA{}), performs iteratively corrected interpolation using the model reduction bases in~\Cref{thm:enforceMixedInterp}.
Specifically, at each step, the interpolation points and tangential directions are taken from the poles and residue directions of the previous reduced model iterate; the $3r+r^2$ tangential interpolation conditions in~\cref{subeq:genInterpolationCons} are then enforced by Petrov-Galerkin projection using these data.
The algorithm repeats until the (relative) largest magnitude change in the reduced model poles between consecutive iterates falls below a user-specified tolerance. Thus, the interpolation-based $\CH_2$-optimality conditions in~\cref{eq:H2OptConds} will be satisfied up to this tolerance if~\Cref{alg:lqoIrka} converges.
Because the construction of $\BVr$ and $\BWr$ in~\cref{eq:interpBasisVr} and~\cref{eq:interpBasisWr} requires only shifted linear solves and sparse matrix calculations involving the full-order matrix operators, the proposed method is suitable for large-scale problems.

\subsection{Practical refinements of~\Cref{alg:lqoIrka}}
Briefly, we discuss some practical implementation details of~\Cref{alg:lqoIrka}.

\subsubsection{Real-valued reduced models from complex-valued interpolation data}
A primitive construction of the interpolatory bases $\BVr$ and $\BWr$ given by~\Cref{thm:enforceMixedInterp} will likely produce complex-valued \ROM{}s when complex-valued interpolation data are used, as in~\Cref{alg:lqoIrka}.
To circumvent this issue, we propose an alternative, real-valued construction of $\BVr$ and $\BWr$ for use in~\Cref{alg:lqoIrka}.
(In the subsequent result, we use \MATLAB{} notation to index the columns of a matrix.)

\begin{lemma}
    \label{lemma:realValuedBases}
    Assume that we have the following interpolation data that satisfy the hypotheses of~\Cref{thm:enforceMixedInterp}: distinct interpolation points $\sigma_1,\ldots,\sigma_r\in\C$, right-tangential directions $\Br_1,\ldots,\Br_r\in\Cm$, and left-tangential directions $\Bl_1,\ldots,\Bl_r\in\Cp$ and $\Bq_{1,1}, \ldots, \Bq_{r,r}\in\Cp$.
    Suppose that the interpolation points are arranged into complex conjugate pairs so that $\overline{\sigma}_k=\sigma_{k+1}$ or $\sigma_k$ is real-valued, and the corresponding tangential directions are arranged as follows:
    \begin{align}
    \begin{split}
    \label{eq:complexDataOrg}
        \overline{\Br}_k &= \begin{cases}
            \Br_{k+1} & \mbox{if}~~\overline{\sigma}_k=\sigma_{k+1}\\
            \Br_{k} & \mbox{else},
        \end{cases}~~~~
        \overline{\Bl}_k=\begin{cases}
            \Bl_{k+1} & \mbox{if}~~\overline{\sigma}_k=\sigma_{k+1}\\
            \Bl_{k} & \mbox{else},
        \end{cases}\\
        \overline{\Bq}_{j,k}&=\begin{cases}
            \Bq_{j+1,k+1} & \mbox{if}~~\overline{\sigma}_j=\sigma_{j+1},~~~\overline{\sigma}_k=\sigma_{k+1}\\
            \Bq_{j+1,k} & \mbox{if}~~\overline{\sigma}_j=\sigma_{j+1} ,~\,~\imag\left({\sigma_k}\right)= 0\\
            \Bq_{j,k+1} & \mbox{if}~~\imag\left({\sigma_j}\right)=0 ,~\overline{\sigma}_k=\sigma_{k+1}\\
            \Bq_{j,k} & \mbox{else},
        \end{cases}
    \end{split}
    \end{align}
    for every other $j,k.$
    Let $\Bv_k\in\Cn$ and $\Bw_k\in\Cn$ be defined as in~\cref{eq:interpBasisVr} and~\cref{eq:interpBasisWr}.
    Suppose that the matrices $\BV\in\Cnr$ and $\BW\in\Cnr$ are constructed as
    \begin{align}
        \begin{split}
            \label{eq:RVinterpBasisVr}
            \BV(\colon,k)&=\Bv_k,~~~~~~~~~~~~~~~~~~~~~~\,\mbox{if}~\imag\left(\sigma_k\right)=0,\\
            \BV(\colon,k\colon k+1)&=\begin{bmatrix}\real\left(\Bv_k\right) & \imag\left(\Bv_k\right)\end{bmatrix}~~\mbox{else},
        \end{split} \\
        \begin{split}
            \label{eq:RVinterpBasisWr}
           \BW(\colon,k)&=\Bw_k~~~~~~~~~~~~~~~~~~~~~~~\,\mbox{if}~\imag\left(\sigma_k\right)=0,\\
            \BW(\colon,k\colon k+1)&=\begin{bmatrix}\real\left(\Bw_k\right) & \imag\left(\Bw_k\right)\end{bmatrix}~\mbox{else},
        \end{split}
    \end{align}
    {for every other $k$.}
    Then, $\BV$ and $\BW$ are real valued, and it holds that
    \begin{equation*}
        \range\left(\BV\right)=\range\left(\BV_{\operatorname{p}}\right)~~\mbox{and}~~  \range\left(\BW\right)=\range\left(\BW_{\operatorname{p}}\right),   
    \end{equation*}
    where     
    $\BVp=\begin{bmatrix}
        \Bv_1 & \cdots & \Bv_r
    \end{bmatrix}\in\Cnr$ and $\BWp=\begin{bmatrix}
        \Bw_1 & \cdots & \Bw_r
    \end{bmatrix}\in\Cnr$.
\end{lemma}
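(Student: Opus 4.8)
The plan is to reduce the statement to a single conjugation identity for the basis vectors, after which the result follows from the familiar real/imaginary-part span argument used in \LTI{} \IRKA{}. For each index $k$ write $k'$ for its conjugate partner: $k'=k+1$ if $\overline{\sigma}_k=\sigma_{k+1}$, $k'=k-1$ if $\overline{\sigma}_{k-1}=\sigma_k$, and $k'=k$ if $\imag(\sigma_k)=0$; then $k\mapsto k'$ is an involution, hence a permutation of $\{1,\ldots,r\}$. The first step is to conjugate the definition~\cref{eq:interpBasisVr} of $\Bv_k$. Since $\BE,\BA,\BB$ are real and the right-tangential directions are paired as in~\cref{eq:complexDataOrg},
\[
  \overline{\Bv}_k=\big(\overline{\sigma}_k\BE-\BA\big)^{-1}\BB\,\overline{\Br}_k=\big(\sigma_{k'}\BE-\BA\big)^{-1}\BB\,\Br_{k'}=\Bv_{k'},
\]
so in particular $\Bv_k$ is real whenever $\sigma_k$ is real, and $\{\Bv_{k'}\}_{k=1}^r=\{\Bv_k\}_{k=1}^r$.

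The second step, which is the heart of the argument, is to establish the analogue $\overline{\Bw}_k=\Bw_{k'}$. Conjugating~\cref{eq:interpBasisWr} and using that $\BM_1,\ldots,\BM_p$ and $\BC$ are real gives
\[
  \overline{\Bw}_k=\big(\sigma_{k'}\BE^{\trans}-\BA^{\trans}\big)^{-1}\!\left(2\sum_{\ell=1}^r\begin{bmatrix}\BM_1\overline{\Bv}_\ell & \cdots & \BM_p\overline{\Bv}_\ell\end{bmatrix}\overline{\Bq}_{k,\ell}+\BC^{\trans}\overline{\Bl}_k\right).
\]
Now I would substitute $\overline{\Bv}_\ell=\Bv_{\ell'}$ (from step one), $\overline{\Bl}_k=\Bl_{k'}$, and $\overline{\Bq}_{k,\ell}=\Bq_{k',\ell'}$; this last equality is precisely what the four-case organization of the $\Bq$'s in~\cref{eq:complexDataOrg} was designed to produce, the four cases recording how $\overline{\Bq}_{k,\ell}$ is identified with another entry according to whether each of $\sigma_k,\sigma_\ell$ is real or the first element of a conjugate pair. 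Because $\ell\mapsto\ell'$ is a bijection of $\{1,\ldots,r\}$, re-indexing the sum by $\ell'$ turns the right-hand side into exactly the defining expression for $\Bw_{k'}$, so $\overline{\Bw}_k=\Bw_{k'}$; in particular $\Bw_k$ is real when $\sigma_k$ is real.

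Finally I would assemble the span statement one column block at a time. For a real interpolation point the corresponding columns of $\BV$ and $\BVp$ coincide and are real. For a conjugate pair $(k,k+1)$ the relations $\real(\Bv_k)=\tfrac12(\Bv_k+\Bv_{k+1})$, $\imag(\Bv_k)=\tfrac{1}{2\imunit}(\Bv_k-\Bv_{k+1})$ and their inverses $\Bv_k=\real(\Bv_k)+\imunit\,\imag(\Bv_k)$, $\Bv_{k+1}=\real(\Bv_k)-\imunit\,\imag(\Bv_k)$ show that $\mspan\{\real(\Bv_k),\imag(\Bv_k)\}=\mspan\{\Bv_k,\Bv_{k+1}\}$, with the left-hand spanning set real-valued; summing over all blocks yields $\range(\BV)=\range(\BVp)$ and the reality of $\BV$. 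The identical argument with $\overline{\Bw}_k=\Bw_{k'}$ in place of $\overline{\Bv}_k=\Bv_{k'}$ gives $\range(\BW)=\range(\BWp)$ and the reality of $\BW$. The only genuine obstacle is the bookkeeping in the second step: checking $\overline{\Bq}_{k,\ell}=\Bq_{k',\ell'}$ against each branch of~\cref{eq:complexDataOrg} and confirming that the double dependence of $\Bw_k$ on all of $\Bv_1,\ldots,\Bv_r$ survives the re-indexing; everything else is routine.
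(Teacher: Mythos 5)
Your proposal is correct and follows essentially the same route as the paper's proof: establish $\overline{\Bv}_k=\Bv_{k'}$ and $\overline{\Bw}_k=\Bw_{k'}$ (the paper phrases the $\Bq$-bookkeeping as closure of the sets $\{\Bq_{k,1},\ldots,\Bq_{k,r}\}$ under conjugation and re-indexes the sum exactly as you do), and then pass from conjugation-closure of the columns of $\BVp,\BWp$ to equality of ranges with the real matrices $\BV,\BW$. Your block-wise $\mspan\{\real(\Bv_k),\imag(\Bv_k)\}=\mspan\{\Bv_k,\Bv_{k+1}\}$ argument is just an explicit form of the paper's observation that $\BW=\BWp\BQ$ for an invertible block-diagonal $\BQ$, so no substantive difference remains.
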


\begin{proof}[Proof of~\Cref{lemma:realValuedBases}]
The proof is an intuitive extension of the known result for linear systems. One only needs to show that the columns of $\BWp$ and $\BVp$ are closed under complex conjugation.
Because of its similarity to the linear case, we omit it for brevity. The full details can be found in~\cite[Lemma~6.12]{Rei25}.
\end{proof}

\Cref{lemma:realValuedBases} shows how to compute real-valued interpolatory reduced models that satisfy the conditions in~\cref{subeq:genInterpolationCons} from a real-valued full-order model~\cref{eq:lqoSys} when complex interpolation data are used.
The organizational structure imposed upon the interpolation data in~\Cref{lemma:realValuedBases} is meant to mimic that of the interpolation data computed during~\Cref{alg:lqoIrka}, as well as the optimal interpolation data.
Indeed, for a reduced model~\cref{eq:lqoSysRed}, the eigenvalues $\lambdared_k\in\C$ and eigenvectors $\Btred_k,\Bsred_k\in\Cr$ computed from the generalized eigendecomposition of $\BEr$ and $\BAr$ are closed under complex conjugation, since these matrices are real valued. Thus, the eigenvalues and eigenvectors can be organized into conjugate eigenpairs according to~\cref{eq:complexDataOrg}.
One can verify directly that the residue directions~\cref{eq:resComponents} used for the interpolatory projections throughout~\Cref{alg:lqoIrka} obey the organizational scheme laid out in~\cref{eq:complexDataOrg}.

\subsubsection{Convergence monitoring, unstable intermediate models, and initialization strategies}\label{sss:algoDetails}
The iteration in~\Cref{alg:lqoIrka} repeats until either the iteration count exceeds a maximum number of allowed steps $M\geq 1$, or the largest magnitude (relative) change in the reduced model poles between consecutive iterates falls below a user-specified tolerance $\tau >0$.
$M$ is a budget parameter set by the user. For choosing $\tau$, our numerical experiments suggest that $\tau\sim O(10^{-4})$ is sufficient; see our discussion in~\Cref{sec:numericalResults}.
Although there are many possibilities for monitoring convergence, we choose to use the change in the poles because this guarantees that the optimality conditions in~\cref{eq:H2OptConds} will be satisfied if the iteration converges. (In fact, this quantity is typically used to monitor convergence in the traditional \IRKA{} iteration~\cite{GugAB08}.)
Moreover, this criterion is numerically efficient since the poles (and residues) of the current model iterate need to be computed regardless, to update the interpolation data for the next step.
Because \LQOIRKA{} aims to solve the $\CH_2$ minimization problem~\cref{eq:H2OptimalMORProb}, a natural alternative is to monitor the $\CH_2$ error throughout the iteration, and terminate once the change in the relative $\CH_2$ error falls below $\tau$. 
However, this would require one to pre-compute all eigenvalues and eigenvectors of the full-order problem to apply~\cref{eq:H2normPoleRes} to the error system, or solve a large-scale Lyapunov equation en route to computing the $\CH_2$ error using the formulae in~\cite{Reietal25,PrzDGB24}. 

As with the original \IRKA{} iteration, asymptotic stability is not guaranteed by~\Cref{alg:lqoIrka} but is typically maintained in practice. 
If an unstable intermediate model does appear, one can simply reflect the unstable pole across the imaginary axis to avoid interpolation at this point, and ensure the interpolatory first-order necessary conditions are satisfied upon convergence.
In our experiments, we have never observed that \LQOIRKA{} converges to an unstable reduced model given a stable initialization.

The initialization of~\Cref{alg:lqoIrka} corresponds to an appropriate selection of complex interpolation points and tangential directions, and will affect the quality of the final result model. 
Because the optimal interpolation points are the mirror images of the reduced model poles, and one would expect these to lie in the numerical range of $\BE^{-1}\BA$, choosing $r$ interpolation points in this region is usually an effective strategy.
We investigate \LQOIRKA{} under different initialization strategies in~\Cref{sec:numericalResults}.

\begin{remark}
\label{remark:inexactSolves}
    In this section, we have implicitly assumed that \emph{direct} methods are used to solve the linear systems required to compute the interpolatory bases $\BV$ and $\BW$. For the \LTI{} case, Beattie et al.~\cite{BeaGW12} investigated the impact of (inexact) iterative solves on the resulting interpolatory \ROM{}s, and showed that employing a Petrov-Galerkin framework for the inexact solves yields a rational interpolant of a nearby full-order system, thus establishing a backward stability framework for interpolatory \MOR{}.
    It will be an interesting research direction to establish whether such a backward error result holds for the bases in~\Cref{thm:enforceMixedInterp} and interpolatory \LQO-\MOR{}.
\end{remark}

\begin{remark}
The benchmarks that we consider in~\Cref{sec:numericalResults} have at most $m = 2$ inputs or $p = 2$ outputs.
For the linear \IRKA{}, the convergence behavior depends on $p$ and $m$; see, e.g.,~\cite[Ch.~5.2]{AntBG20}.
In our experiments, we have observed that the convergence of \LQOIRKA{} may slow considerably if $m$ or $p$ is large. The degree to which convergence slows depends on a variety of influences, such as the magnitude of $m$ and $p$ relative to $n$.
In the interest of space, we do not investigate here any examples with large $m$ or $p$, and leave such a rigorous convergence analysis for future work.
\end{remark}
\section{Numerical results}
\label{sec:numericalResults}
We test the proposed~\Cref{alg:lqoIrka} on  two benchmark problems from the \MOR{} literature. 
All experiments were performed on a MacBook Air with 8 gigabytes of RAM and an Apple M2 processor running macOS Sequoia version 15.2 with MATLAB 23.2.0.2515942 (R2023b) Update 7.
The source codes for recreating the numerical experiments and results are available at~\cite{supRei26}.

\subsection{Experimental setup}
\label{ss:setup}
For \LQOIRKA{}, two choices for the initial interpolation data are tested to assess the iteration's robustness to different initializations:
\begin{description} 
    \item[\eigs{}] uses the (mirrored) poles and residue directions of an initial reduced model computed by Galerkin projection $\BV=\BW$, where $\BV\in\Rnr$ is the orthonormalized basis of the $r$-dimensional invariant subspace of $\BE^{-1}\BA$
    corresponding to the eigenvalues with smallest magnitude, which are obtained using \MATLAB{}'s \eigs{} command with a tolerance of $10^{-10}$ and the `$\mathsf{smallestabs}$' input option.
    \item[\imagsf{}] takes the initial interpolation points to be $r$ points of the form $\sigma_k=\imunit z_k$, where $z_k$ are $r/2$ logarithmically or linearly spaced points in an interval; these points are closed under complex conjugation. The tangential-directions are chosen to be the leading canonical basis vectors of dimension $r$.
\end{description}
Per our discussion in~\Cref{sss:algoDetails}, the change in the reduced model poles is used to monitor convergence in \LQOIRKA{}.
We compare \LQOIRKA{} in~\Cref{alg:lqoIrka} against two other \MOR{} strategies for computing \ROM{}s of the benchmark problems.
\begin{description}
    \item[\LQOBT{}] is the balanced truncation \MOR{} algorithm for \LQO{} systems from~\cite{BenGPD21};
    \item[\interpOneStep{}] computes a (one-step) interpolatory reduced model using $\BV\in\Rnr$ and $\BW\in\Rnr$ as in \Cref{thm:enforceMixedInterp} (\Cref{lemma:realValuedBases}) with non-optimal interpolation data. For these experiments, the data are chosen according to \eigs{} and \imagInit{}.
    We refer to \interpOneStep{} with these selection strategies as \interpOneStepEigs{} and \interpOneStepImag{}.
    In either case, \interpOneStep{} produces a reduced model that satisfies all the interpolation conditions of~\Cref{thm:enforceMixedInterp}, but for non-optimal interpolation data. Note that these two choices correspond to the initial interpolation data we use for  \LQOIRKA{}, thus to the first step of \LQOIRKA{}.
\end{description}
Our rationale for including \interpOneStep{} is to investigate the quality of \ROM{}s that satisfy the mixed-multipoint conditions of~\Cref{thm:enforceMixedInterp} using \emph{non-optimal} interpolation data, since these conditions have not previously appeared in the \LQO-\MOR{} literature.

We test the performance of the computed \LQO-\ROM{}s in recovering the (time-domain) output $\By$ of the full-order model being approximated for particular choices of inputs. 
Because the benchmarks we consider have either a single output or multiple linear- and quadratic-outputs that we analyze separately,
we write $y=\By$ below.
The time-domain simulations are implemented using \MATLAB{}'s $\mathsf{ode15s}$ using a fixed step size.
To visibly compare the performance of the reduced models, we plot the full- and reduced-order outputs, as well as their pointwise relative error given by
\begin{equation}
    \label{eq:pointwiseOutputError}
    \relerr(t_i)\defeq\frac{|y(t_i)-\yr(t_i)|}{|y(t_i)|},~~t_i\in[t_{{\min}},t_{{\max}}],
\end{equation}
where $t_i\in[t_{\min},t_{\max}]$ are the $N$ (equidistant) time steps in the simulation.
To assess the worst-case performance of the \ROM{}s over the simulation window, we use an approximation of the relative $\CL_\infty$ error:
\begin{equation}
    \label{eq:relLinftyerror}
    \relerr_{\CL_\infty}\defeq\max_{t_i}\frac{|y(t_i)-\yr(t_i)|}{|y(t_i)|}.
\end{equation}
To assess the average performance of the reduced models over the simulation window, we use an approximation of the relative $\CL_2$ error:
\begin{equation}
    \label{eq:relL2error}
    \relerr_{\CL_2}\defeq\left(\frac{\sum_{i=1}^N |y(t_i)-\yr(t_i)|^2}{\sum_{i=1}^N|y(t_i)|^2}\right)^{1/2}.
\end{equation}
We also score the reduced model performance using the relative $\CH_2$ system error defined according to~\Cref{def:H2normTf}:
\begin{equation}
    \label{eq:relH2error}
    \relerr_{\CH_2}\defeq \frac{\|\Sys-\Sysred\|_{\CH_2}}{\|\Sys\|_{\CH_2}}.
\end{equation}

\subsection{1D advection-diffusion equation with a quadratic cost}
\label{ss:advecdiff_example}
We first consider a
1D advection-diffusion equation from~\cite[Section~4.1]{DiazHGA23}.
The governing equations are written as
\begin{align}
\begin{split}
\label{eq:advec}
    \frac{\partial}{\partial t} v(t,x)-\alpha \frac{\partial^2}{\partial x^2} v(t,x) + \beta \frac{\partial}{\partial x} v(t,x)&=0,\\
    v(t,0)=u_0(t), \ \ \alpha \frac{\partial}{\partial x}v(t,1)=u_1(t), \ \ 
    v(0,x)&=0,
\end{split}
\end{align}
for $x\in(0,1)$ and $t\in(0,T)$ and inputs $u_0,u_1\in\CL_2(0,T)$. The diffusion and advection coefficients are chosen as $\alpha=1$ and $\beta=1$.
The output that we consider is
\begin{align}
\label{eq:quadCost}
    \frac{1}{2}\int_0^1|v(t,x)-1|^2@\ds x.
\end{align}
Such an observable output may arise from, e.g., the objective cost function in an optimal control problem.
Discretizing the equations in~\cref{eq:advec} using $n+1$ equidistant spatial points yields an order-$n$ state-space model of the form~\cref{eq:lqoSys} with $m=2$ inputs $u_0$, $u_1$, and $p=1$ output $y$. Let $\Bx(t)\in\Rn$ denote the spatial discretization of $v(t,x)$,  $h=1/n$, and $\mathbf{1}_n\in\Rn$ be the $n$-dimensional vector consisting of all ones.
Then, the discretization provides an approximation to the quadratic cost function~\cref{eq:quadCost} 
\begin{equation*}
    \frac{h}{2}\|\Bx(t)-\mathbf{1}\|_2^2=-h@\mathbf{1}_n^{\trans}\Bx(t)+ \frac{h}{2}\vecm\left(\BI_n\right)^{\trans} \left(\Bx(t)\otimes\Bx(t)\right)+ \frac{h}{2}\|\mathbf{1}_n\|_2^2= y(t) + \frac{h}{2}\|\mathbf{1}_n\|_2^2.
\end{equation*}
To fit the framework of~\cref{eq:lqoSys}, we consider the single output of the discretized system to be given by $y(t)=\BC\Bx(t) +\BM\left(\Bx(t)\otimes\Bx(t)\right)$ for $\BC=-h@\mathbf{1}_n^{\trans}\in\R^{1\times n}$ and $\BM=\frac{h}{2}\vecm\left(\BI_n\right)^{\trans} \in\R^{1\times n^2}$, where $\BI_n$ is the $n\times n$ identity matrix. The approximation to the cost~\cref{eq:quadCost} is recovered from the output $y(t)$ via $\frac{h}{2}\|\Bx(t)-\mathbf{1}_n\|_2^2=y(t) + \frac{h}{2}\|\mathbf{1}_n\|_2^2.$

To obtain an \LQO{} system in state-space form~\cref{eq:lqoSys} from~\cref{eq:advec}, an upwind finite-difference discretization of~\cref{eq:advec} is performed using $n+1=3\,001$ spatial grid points.
For this example, $\BE=\BI_n$ by construction.

\subsubsection{Discussion of the results}
\label{sss:advecDiscussion}
Before comparing the performance of the reduced models, we investigate the convergence behavior of \LQOIRKA{} using the initialization strategies \eigs{} and \imagsf{}. This experiment also serves to examine the robustness of \LQOIRKA{} with respect to the different initialization strategies.
We set $r=30$, the tolerance to $\tau=10^{-10}$, and the maximum number of allowed iterations to $M=200.$ Note that this convergence tolerance is smaller in magnitude than one would typically use in practice; we choose this to investigate the long-term convergence behavior of the iteration.
Although the change in the reduced model poles is used to monitor the convergence of \LQOIRKA{}, we also compute the relative $\CH_2$ error~\cref{eq:relH2error} throughout to investigate how this quantity evolves throughout.

\begin{figure}[t!]
    \centering
    \begin{subfigure}[b]{.49\linewidth}
        \raggedleft
  \tikzexternalenable%
  \tikzsetnextfilename{advecdiff_conv_errors}%
  \begin{tikzpicture}[font = \plotfontsize]

  \begin{semilogyaxis}[%
    width  = .73\linewidth,
    height = .1\textheight,
    scale only axis,
    xmin = 1,
    xmax = 50,
    ymin = 1e-8,
    ymax = 1e3,
    xminorticks = true,
    yminorticks = true,
    xlabel = {iteration count $i$},
    ylabel = {relative $\CH_2$ error},
    ylabel style   = {yshift = -.3em},
    scaled x ticks = false,
    x tick label style = {/pgf/number format/1000 sep={\,}},
    y tick label style = {/pgf/number format/1000 sep={\,}},
    cycle list name    = convplotlist,
  ]

  \pgfplotsset{cycle list shift = 1}

\pgfplotstableread{graphics/data/AdvecDiff3000_IRKA_eigsInit_r30conv.dat}\tableINPUTdiag
  
    \foreach \y in {1}{
      \addplot+ table[x index = 0, y index = \y] {\tableINPUTdiag};
    }

\pgfplotstableread{graphics/data/AdvecDiff3000_IRKA_imagInit_r30conv.dat}\tableINPUT
  
    \foreach \y in {1}{
      \addplot+ table[x index = 0, y index = \y] {\tableINPUT};
    }

  \end{semilogyaxis}
\end{tikzpicture}%
  \tikzexternaldisable%

    \caption{
    Relative $\CH_2$ errors of the intermediate order $r=30$ \ROM{}s of the advection diffusion example computed during the first $50$ \LQOIRKA{} iterations.
    }
    \label{fig:advecdiff_conv_errors}
    \end{subfigure}%
    \hfill%
    \begin{subfigure}[b]{.49\linewidth}
        \raggedleft
  \tikzexternalenable%
  \tikzsetnextfilename{advecdiff_conv_poles}%
  \begin{tikzpicture}[font = \plotfontsize]

  \begin{semilogyaxis}[%
    width  = .73\linewidth,
    height = .1\textheight,
    scale only axis,
    xmin = 1,
    xmax = 50,
    ymin = 1e-5,
    ymax = 1e7,
    xminorticks = true,
    yminorticks = true,
    xlabel = {iteration count $i$},
    ylabel = {change in poles},
    ylabel style   = {yshift = -.3em},
    scaled x ticks = false,
    x tick label style = {/pgf/number format/1000 sep={\,}},
    y tick label style = {/pgf/number format/1000 sep={\,}},
    cycle list name    = convplotlist,
  ]

  \pgfplotsset{cycle list shift = 1}

\pgfplotstableread{graphics/data/AdvecDiff3000_IRKA_eigsInit_r30conv.dat}\tableINPUTdiag
  
    \foreach \y in {2}{
      \addplot+ table[x index = 0, y index = \y] {\tableINPUTdiag};
    }

\pgfplotstableread{graphics/data/AdvecDiff3000_IRKA_imagInit_r30conv.dat}\tableINPUT
  
    \foreach \y in {2}{
      \addplot+ table[x index = 0, y index = \y] {\tableINPUT};
    }

  \end{semilogyaxis}
\end{tikzpicture}%
  \tikzexternaldisable%

    \caption{
    Max relative change in the poles of the intermediate order $r=30$ \ROM{}s of the advection diffusion example computed during the first $50$ \LQOIRKA{} iterations.
    }
    \label{fig:advecdiff_conv_poles}
    \end{subfigure}%

\vspace{.5\baselineskip}
\begin{center} 
  \tikzexternalenable%
  \tikzsetnextfilename{conv_legend}%
  \begin{tikzpicture}
  \begin{axis}[%
    hide axis,
    width  = 1mm,
    height = 1mm,
    scale only axis,
    xmin = 0,
    xmax = 1,
    ymin = 0,
    ymax = 1,
    legend columns = 3, 
    legend style   = {
      at     = {(0,0)},
      anchor = center,
      /tikz/every even column/.append style = {column sep = 0.2cm}},
    legend cell align  = {left},
    clip mode          = individual,
    cycle list name    = convplotlist]

    \pgfplotsset{cycle list shift = 1}
    \foreach \y in {1, 2, ..., 3}{
      \addplot+ coordinates{ (0, 0) };
    }
    \addlegendentry{\LQOIRKAeigs{}}
    \addlegendentry{\LQOIRKAimag{}}
  \end{axis}
\end{tikzpicture}%
  \tikzexternaldisable%

\end{center}
\caption{Relative $\CH_2$ errors and max change in the poles of the intermediate order $r=30$ \ROM{}s of the advection-diffusion example computed during \LQOIRKAeigs{} and \LQOIRKAimag{} for the first $50$ iterations.}
\label{fig:advecdiff_conv}
\end{figure}

We plot the change in the relative $\CH_2$ errors and the max relative change in the \ROM{} poles throughout \LQOIRKAeigs{} and \LQOIRKAimag{} in~\Cref{fig:advecdiff_conv_errors} and~\Cref{fig:advecdiff_conv_poles}, respectively.
Both iterations converged within the maximally allowed number of steps prescribed by $M$. 
\LQOIRKAeigs{} and \LQOIRKAimag{} exhibit very similar convergence behavior; for each iteration, the relative $\CH_2$ error drops several orders of magnitude, and \LQOIRKAeigs{} and \LQOIRKAimag{} seem to identify the \emph{same} local minimum within the first fifteen iterations. 
On the other hand, \Cref{fig:advecdiff_conv_poles} shows that the reduced model poles continue to change well beyond this, despite the iteration having seemingly identified a local minimum.
Thus, while computing the relative $\CH_2$ error at every step is more computationally expensive, it is also a better indicator of convergence of the method. 
In aggregate, these observations suggest that a relatively larger tolerance, e.g., $\tau=10^{-4}$, is sufficient for \LQOIRKA{} to identify a local minimum of the $\CH_2$ error.

%
\begin{figure*}[t!]
    \centering
    \begin{subfigure}[b]{.49\linewidth}
        \raggedleft
  \tikzexternalenable%
  \tikzsetnextfilename{advecdiff_outputs_sin}%
  \begin{tikzpicture}[font = \plotfontsize]
  \pgfplotstableread{graphics/data/AdvecDiff3000_sinc_r30_Outputs.dat}\tableINPUT
  
  \begin{axis}[%
    width  = .73\linewidth,
    height = .1\textheight,
    scale only axis,
    xmin = 0,
    xmax = 10,
    ymin = 0,
    ymax = 3.5,
    xminorticks = true,
    yminorticks = true,
    xlabel = {time $t$ (s)},
    ylabel = {$y(t)$},
    ylabel style   = {yshift = -.3em},
    scaled x ticks = false,
    x tick label style = {/pgf/number format/1000 sep={\,}},
    y tick label style = {/pgf/number format/1000 sep={\,}},
    cycle list name    = plotlist
  ]

    \foreach \y in {1, 2, 3, 4, 5, 6}
    {
      \addplot+ table[x index = 0, y index = \y] {\tableINPUT};
    }
  \end{axis}
\end{tikzpicture}%
  \tikzexternaldisable%
\\
  \tikzexternalenable%
  \tikzsetnextfilename{advecdiff_outputerrors_sin}%
  \begin{tikzpicture}[font = \plotfontsize]
    \pgfplotstableread{graphics/data/AdvecDiff3000_sinc_r30_OutputErrors.dat}\tableINPUT
  
  \begin{semilogyaxis}[%
    width  = .73\linewidth,
    height = .1\textheight,
    scale only axis,
    xmin = 0,
    xmax = 10,
    ymin = 1e-9,
    ymax = .5*1e2,
    xminorticks = true,
    yminorticks = true,
    xlabel = {time $t$ (s)},
    ylabel = {$\relerr(t)$},
    ylabel style   = {yshift = -.3em},
    scaled x ticks = false,
    x tick label style = {/pgf/number format/1000 sep={\,}},
    y tick label style = {/pgf/number format/1000 sep={\,}},
    cycle list name    = plotlist
  ]

  \pgfplotsset{cycle list shift = 1}
  
    \foreach \y in {1, 2, 3, 4, 5}
    {
      \addplot+ table[x index = 0, y index = \y] {\tableINPUT};
    }
  \end{semilogyaxis}
\end{tikzpicture}%
  \tikzexternaldisable%
\\
    \caption{
    Output responses and pointwise relative errors~\cref{eq:pointwiseOutputError} of the full- and reduced-order models for $u_0(t)=0$ and $u_1(t)=u_{\sinc}(t)$.
    }
    \label{fig:r30advecdiff_output_sin}
    \end{subfigure}%
    \hfill%
    \begin{subfigure}[b]{.49\linewidth}
        \raggedleft
  \tikzexternalenable%
  \tikzsetnextfilename{advecdiff_outputs_exp}%
  \begin{tikzpicture}[font = \plotfontsize]
  \pgfplotstableread{graphics/data/AdvecDiff3000_exponential_r30_Outputs.dat}\tableINPUT
  
  \begin{axis}[%
    width  = .73\linewidth,
    height = .1\textheight,
    scale only axis,
    xmin = 0,
    xmax = 10,
    ymin = .1,
    ymax = .9,
    xminorticks = true,
    yminorticks = true,
    xlabel = {time $t$ (s)},
    ylabel = {$y(t)$},
    ylabel style   = {yshift = -.3em},
    scaled x ticks = false,
    x tick label style = {/pgf/number format/1000 sep={\,}},
    y tick label style = {/pgf/number format/1000 sep={\,}},
    cycle list name    = plotlist
  ]
  
    \foreach \y in {1, 2, 3, 4, 5, 6}
    {
      \addplot+ table[x index = 0, y index = \y] {\tableINPUT};
    }
  \end{axis}
\end{tikzpicture}%
  \tikzexternaldisable%
\\
  \tikzexternalenable%
  \tikzsetnextfilename{advecdiff_outputerrors_exp}%
  \begin{tikzpicture}[font = \plotfontsize]
    \pgfplotstableread{graphics/data/AdvecDiff3000_exponential_r30_OutputErrors.dat}\tableINPUT
  
  \begin{semilogyaxis}[%
    width  = .73\linewidth,
    height = .1\textheight,
    scale only axis,
    xmin = 0,
    xmax = 10,
    ymin = 1e-10,
    ymax = 1e1,
    xminorticks = true,
    yminorticks = true,
    xlabel = {time $t$ (s)},
    ylabel = {$\relerr(t)$},
    ylabel style   = {yshift = -.3em},
    scaled x ticks = false,
    x tick label style = {/pgf/number format/1000 sep={\,}},
    y tick label style = {/pgf/number format/1000 sep={\,}},
    cycle list name    = plotlist
  ]

  \pgfplotsset{cycle list shift = 1}

    \foreach \y in {1, 2, 3, 4, 5}
    {
      \addplot+ table[x index = 0, y index = \y] {\tableINPUT};
    }
  \end{semilogyaxis}
\end{tikzpicture}%
  \tikzexternaldisable%
\\
    \caption{
    Output responses and pointwise relative errors~\cref{eq:pointwiseOutputError} of the full- and reduced-order models for $u_0(t)=0$ and $u_1(t)=u_{\exp}(t)$.
    }
    \label{fig:r30advecdiff_output_exp}
    \end{subfigure}%

  \vspace{.5\baselineskip}
  \tikzexternalenable%
  \tikzsetnextfilename{outputs_legend}%
  \begin{tikzpicture}
  \begin{axis}[%
    hide axis,
    width  = 1mm,
    height = 1mm,
    scale only axis,
    xmin = 0,
    xmax = 1,
    ymin = 0,
    ymax = 1,
    legend columns = 3, 
    legend style   = {
      at     = {(0,0)},
      anchor = center,
      /tikz/every even column/.append style = {column sep = 0.2cm}},
    legend cell align  = {left},
    clip mode          = individual,
    cycle list name    = plotlist]

    \foreach \y in {1,2,3,4,5,6}
    {
      \addplot+ coordinates{ (0, 0) };
    }
    \addlegendentry{\ensuremath{\mathsf{FOM}}}
    \addlegendentry{\LQOIRKAeigs{}}
    \addlegendentry{\LQOIRKAimag{}}
    \addlegendentry{\LQOBT{}}
    \addlegendentry{\interpOneStepEigs{}}
    \addlegendentry{\interpOneStepImag{}}

  \end{axis}
\end{tikzpicture}%
  \tikzexternaldisable%

    \label{fig:advecDiff_OutputErrors}
  \caption{
  Output responses and pointwise relative errors~\cref{eq:pointwiseOutputError} of the full-order and order $r=30$ reduced models of the advection-diffusion example driven by $u_1(t)=u_{\sinc}(t)$  and $u_1(t)=u_{\exp}(t)$ in~\cref{eq:inputs}.
  }
\end{figure*}

\begin{table*}[t!]
  \centering
  \resizebox{\linewidth}{!}{
  \begin{tabular}{llllllll}
    \hline\noalign{\smallskip}
      & \multicolumn{1}{c}{\LQOIRKAeigs{}}
      & \multicolumn{1}{c}{\LQOIRKAimag{}}
      & \multicolumn{1}{c}{\LQOBT{}}
      & \multicolumn{1}{c}{\interpOneStepEigs{}}
      & \multicolumn{1}{c}{\interpOneStepImag{}}\\
      \noalign{\smallskip}\hline\noalign{\smallskip}
      $\relerr_{\CL_\infty}$ ($u_{\sinc}$) 
      & $\boldsymbol{6.4135\texttt{e-}5}$
      & $6.4236\texttt{e-}5$
      & $2.4916\texttt{e-}4$
      & $5.5440\texttt{e-}2$
      & $2.5442\texttt{e}0$\\
        $\relerr_{\CL_\infty}$ ($u_{\exp}$) 
        & $\boldsymbol{5.9078\texttt{e-}6}$
      & $5.9037\texttt{e-}6$
      & ${1.7226\texttt{e-}4}$
      & ${1.6854\texttt{e-}2}$
      & ${1.8087\texttt{e}0}$\\
        $\relerr_{\CL_2}$ ~($u_{\sinc}$) 
      & $\boldsymbol{3.5477\texttt{e-}6}$
      & $3.5604\texttt{e-}6$
      & $4.2695\texttt{e-}5$
      & $9.4232\texttt{e-}3$
      & $4.7825\texttt{e-}1$\\
        $\relerr_{\CL_2}$ ~($u_{\exp}$) 
      & $5.4997\texttt{e-}7$
      & $\boldsymbol{5.4889\texttt{e-}7}$
      & $6.4736\texttt{e-}5$
      & $4.5368\texttt{e-}3$
      & $2.0120\texttt{e-}1$\\
        $\relerr_{\CH_2}$ 
      & $4.6226\texttt{e-}7$
      & $\boldsymbol{4.5150\texttt{e-}7}$
      & $7.7204\texttt{e-}7$
      & $9.9902\texttt{e-}1$
      & $9.5336\texttt{e-}1$\\
      \noalign{\smallskip}\hline\noalign{\smallskip}
  \end{tabular}
  }
\caption{Relative errors~\cref{eq:relLinftyerror}~--~\cref{eq:relH2error} for the order $r=30$ \ROM{}s of the advection-diffusion example. The smallest error for each metric is highlighted in \textbf{boldface}.}
  \label{tab:relErrors}
\end{table*} 

Next, we test the performance of the reduced models via time-domain simulations of their outputs.
Five order $r=30$ reduced models of the order $n=3\,000$ full-order model are computed using \LQOIRKAeigs{}, \LQOIRKAimag{}, \LQOBT{}, \interpOneStepEigs{}, and \interpOneStepImag{} according to~\Cref{ss:setup}.
Based on the previous convergence analysis, the convergence tolerance is set to $\tau
= 10^{-4} $ and the maximum number of allowed iterations is $M = 100$ for the \LQOIRKA{} iterations.
As before, each iteration converged within $M$ steps of the iteration where the change in the reduced model poles is used to monitor convergence.
Time-domain simulations are performed using two different pairs of input signals; in either case, we enforce the Dirichlet boundary condition of $u_0(t)=v(t,0)=0$.
The two different input signals used for $u_1$ are:
\begin{equation}
\label{eq:inputs}
    u_{\sinc}(t)\defeq5\frac{\sin(\pi t)}{\pi t}~~\mbox{and}~~u_{\exp}(t)\defeq e^{-t/5}\sin(4\pi t)
\end{equation}
for $t\in[0,10]$. 
The magnitudes of the full- and reduced-order outputs in response to $u_1=u_{\sinc}$ and $u_1=u_{\exp}$, along with the associated relative pointwise errors, are plotted in~\Cref{fig:r30advecdiff_output_sin} and~\Cref{fig:r30advecdiff_output_exp}, respectively. 
The relative $\CL_\infty$, $\CL_2$, and $\CH_2$ error measures in~\cref{eq:relLinftyerror},~\cref{eq:relL2error} and~\cref{eq:relH2error} induced by the reduced models are reported in~\Cref{tab:relErrors}. 
We observe that the \LQOIRKA{} and \LQOBT{} reduced models all produce high-fidelity approximations to the full-order output for both choices of $u_1$.
While \interpOneStepEigs{} offers a reasonable approximation, \interpOneStepImag{} misses the output entirely in both cases.
Overall, the \LQOIRKA{} reduced models produce approximations that are a few orders of magnitude better than those produced by the \LQOBT{} and \interpOneStep{} reduced models.
For the relative errors in~\Cref{tab:relErrors}, the \LQOIRKA{} reduced models record the smallest values in each measure.

\begin{table}[t!]
  \centering
  \resizebox{\linewidth}{!}{
  \begin{tabular}{llllll}
    \hline\noalign{\smallskip}
      & \multicolumn{1}{c}{\LQOIRKAeigs{}}
      & \multicolumn{1}{c}{\LQOIRKAimag{}}
      & \multicolumn{1}{c}{\LQOBT{}}
      & \multicolumn{1}{c}{\interpOneStepEigs{}}
      & \multicolumn{1}{c}{\interpOneStepImag{}}
      \\
      \noalign{\smallskip}
      \hline\noalign{\smallskip}
      Run time (s) 
      & $0.79$\,s
      & $1.00$\,s
      & $35.42$\,s
      & $0.09$\,s
      & $0.06$\,s\\
     Iteration count & $34$ & $44$ & N/A & N/A & N/A \\
      \noalign{\smallskip}\hline\noalign{\smallskip}
  \end{tabular}
}
\caption{Run times and iteration counts for computing the order $r=30$ reduced models of the advection diffusion example. The convergence parameters used for \LQOIRKA{} are $\tau=10^{-4}$ and $M=100$.}
\label{table:runtimes}
\end{table} 
The timings required for computing the reduced models are reported in Table~\ref{table:runtimes}.
As expected, the non-iterative (interpolation-based) methods \interpOneStepEigs{} and \interpOneStepImag{} are very fast since they only require solving $2r$ sparse linear systems. 
The \LQOIRKA{} reduced models are computed an order of magnitude faster than the \LQOBT{} reduced model, and roughly $1$ second slower than the one-shot interpolatory reduced models.
Most of the time spent by \LQOBT{} is in solving the two full-order Lyapunov equations that are necessary for the method.

\begin{figure}[t]
\centering
\raggedleft
  \tikzexternalenable%
  \tikzsetnextfilename{advecdiff_H2errors}%
  \begin{tikzpicture}[font = \plotfontsize]
  \pgfplotstableread{graphics/data/AdvecDiff3000_H2errors.dat}\tableINPUT
  
  \begin{semilogyaxis}[%
    width  = .85\linewidth,
    height = .1\textheight,
    scale only axis,
    xmin = 2,
    xmax = 30,
    ymin = 1e-7,
    ymax = 1e1,
    xminorticks = true,
    yminorticks = true,
    xlabel = {order $r$},
    ylabel = {relative $\CH_2$ error},
    ylabel style   = {yshift = -.3em},
    scaled x ticks = false,
    x tick label style = {/pgf/number format/1000 sep={\,}},
    y tick label style = {/pgf/number format/1000 sep={\,}},
    cycle list name    = errorplotlist
  ]
    \foreach \y in {1, 2, 3}
    {
      \addplot+ table[x index = 0, y index = \y] {\tableINPUT};
    }
  \end{semilogyaxis}
\end{tikzpicture}%
  \tikzexternaldisable%

\vspace{.5\baselineskip}
\begin{center}
  \tikzexternalenable%
  \tikzsetnextfilename{H2errors_legend}%
  \begin{tikzpicture}
  \begin{axis}[%
    hide axis,
    width  = 1mm,
    height = 1mm,
    scale only axis,
    xmin = 0,
    xmax = 1,
    ymin = 0,
    ymax = 1,
    legend columns = 3, 
    legend style   = {
      at     = {(0,0)},
      anchor = center,
      /tikz/every even column/.append style = {column sep = 0.2cm}},
    legend cell align  = {left},
    clip mode          = individual,
    cycle list name    = errorplotlist]

    \foreach \y in {1, 2, 3}{
      \addplot+ coordinates{ (0, 0) };
    }
    
    \addlegendentry{\LQOIRKAeigs{}}
    \addlegendentry{\LQOIRKAimag{}}
    \addlegendentry{\LQOBT{}}
  \end{axis}
\end{tikzpicture}%
  \tikzexternaldisable%

\end{center}
\caption{Relative $\CH_2$ errors~\cref{eq:relH2error} due to the hierarchy of reduced models for orders $r = 2, 4, \ldots, 30$ of the advection-diffusion example. }
\label{fig:relH2errors}
\end{figure}

As a final experiment, we compute hierarchies of \ROM{}s for $r = 2, 4, \ldots, 30$ using \LQOIRKAeigs{}, \LQOIRKAimag{}, and \LQOBT{}.
We compute the relative $\CH_2$ errors due to these approximations and plot them with respect to the increasing order $r$ in~\Cref{fig:relH2errors}.
The same experiment was performed for \interpOneStep{}, and the computed reduced models all produced large relative $\CH_2$ errors. (We do not report these results here.)
For the \LQOIRKA{} and \LQOBT{} reduced models, the relative $\CH_2$ error steadily decreases as the approximation order increases. 
The \LQOIRKA{} reduced models exhibit the smallest relative $\CH_2$ error for each order, although only marginally so for orders $r\geq 10$.
\Cref{fig:relH2errors} also indicates the different initialization strategies \LQOIRKAeigs{} and \LQOIRKAimag{} converge to the same local minimum for each order of reduction. 

\subsection{Stochastic Galerkin model of a mass-spring-damper configuration}
\label{ss:galerkin}
The second benchmark we consider is the stochastic Galerkin model of a mass-spring-damper configuration from~\cite[Sec.~5.1]{Pul18}.
The system is derived from a parametric, coupled mass-spring-damper system comprising $4$ masses connected by $6$ springs and $4$ dampers, with $14$ physical parameters.
Using the stochastic Galerkin approach developed in~\cite{Pul18}, the parameters are modeled as independent random variables,
leading to a $n=5\,440$ dimensional (non-parametric) single-input, multiple-output (\SIMO{}) \LTI{} system.
The single input is the excitation of the bottom spring.
The first output $\expec_1\colon[0,\infty)\to\R$ is an approximation of the expected value of the position of the mass at the top of the configuration at time $t\geq 0$; the remaining outputs collectively produce an approximation $\var_1\colon[0,\infty)\to\R$ to the variance of this mass's position.
For the full details of the modeling approach, we refer the reader to~\cite{Pul18}.

Following~\cite[Sec.~4.3]{PulA19}, we treat the stochastic Galerkin model instead as a \SIMO{} \LQO{} system with $p=2$ outputs.
The first output remains linear and is $\expec_1(t)$; the second output is quadratic, and models $\var_1(t)$ \emph{directly}, instead of via multiple linear outputs.
To fit~\cref{eq:lqoSys}, the matrices $\BC\in\R^{2\times n}$ and $\BM\in\R^{2\times n^2}$ are chosen so that 
\begin{equation*}
\By(t)=\BC\Bx(t)+\BM\left(\Bx(t)\otimes\Bx(t)\right)=\begin{bmatrix} \expec_1(t)\\[1ex]\Bzero_{n}^{\trans}
    \end{bmatrix}+\begin{bmatrix}\Bzero_{n^2}^{\trans}\\[1ex]\var_1(t)
    \end{bmatrix},~~~t\geq 0.
\end{equation*}
For the precise construction of $\BC$ and $\BM$, we refer to the code package~\cite{supRei26}.
 
\subsubsection{Discussion of the results}
\label{sss:galerkinDiscussion}

\begin{figure*}[t!]
    \centering
    \begin{subfigure}[b]{.49\linewidth}
        \raggedleft
  \tikzexternalenable%
  \tikzsetnextfilename{galerkin_lo}%
  \begin{tikzpicture}[font = \plotfontsize]
  \pgfplotstableread{graphics/data/Galerkin5440_r100_LinOutputs.dat}\tableINPUT
  
  \begin{axis}[%
    width  = .73\linewidth,
    height = .1\textheight,
    scale only axis,
    xmin = 0,
    xmax = 500,
    ymin = -1,
    ymax = 1,
    xminorticks = true,
    yminorticks = true,
    xlabel = {time $t$ (s)},
    ylabel = {$\expec_1(t)$},
    ylabel style   = {yshift = -.3em},
    scaled x ticks = false,
    x tick label style = {/pgf/number format/1000 sep={\,}},
    y tick label style = {/pgf/number format/1000 sep={\,}},
    cycle list name    = plotlist
  ]

    \foreach \y in {1, 2, 3, 4}
    {
      \addplot+ table[x index = 0, y index = \y] {\tableINPUT};
    }
  \end{axis}
\end{tikzpicture}%
  \tikzexternaldisable%
\\
  \tikzexternalenable%
  \tikzsetnextfilename{galerkin_lo_errors}%
  \begin{tikzpicture}[font = \plotfontsize]
    \pgfplotstableread{graphics/data/Galerkin5440_sin_r100_LinOutputErrors.dat}\tableINPUT
  
  \begin{semilogyaxis}[%
    width  = .73\linewidth,
    height = .1\textheight,
    scale only axis,
    xmin = 0,
    xmax = 500,
    ymin = 1e-10,
    ymax = 0,       
    xminorticks = true,
    yminorticks = true,
    xlabel = {time $t$ (s)},
    ylabel = {$\relerr(t)$},
    ylabel style   = {yshift = -.3em},
    scaled x ticks = false,
    x tick label style = {/pgf/number format/1000 sep={\,}},
    y tick label style = {/pgf/number format/1000 sep={\,}},
    cycle list name    = plotlist
  ]

  \pgfplotsset{cycle list shift = 1}
  
    \foreach \y in {1, 2, 3}
    {
      \addplot+ table[x index = 0, y index = \y] {\tableINPUT};
    }
  \end{semilogyaxis}
\end{tikzpicture}%
  \tikzexternaldisable%
\\
    \caption{
   Linear output responses and pointwise relative errors~\cref{eq:pointwiseOutputError} of the full- and reduced-order models for $u(t)=u_{\sin}(t)$.
    }
    \label{fig:r100galerkin_lo}
    \end{subfigure}%
    \hfill%
    \begin{subfigure}[b]{.49\linewidth}
        \raggedleft
  \tikzexternalenable%
  \tikzsetnextfilename{galerkin_qo}%
  \begin{tikzpicture}[font = \plotfontsize]
  \pgfplotstableread{graphics/data/Galerkin5440_r100_QuadOutputs.dat}\tableINPUT
  
  \begin{axis}[%
    width  = .73\linewidth,
    height = .1\textheight,
    scale only axis,
    xmin = 0,
    xmax = 500,
    ymin = 0,
    ymax = 4,
    xminorticks = true,
    yminorticks = true,
    xlabel = {time $t$ (s)},
    ylabel = {$\var_1(t)$},
    ylabel style   = {yshift = -.3em},
    scaled x ticks = false,
    x tick label style = {/pgf/number format/1000 sep={\,}},
    y tick label style = {/pgf/number format/1000 sep={\,}},
    cycle list name    = plotlist
  ]

    \foreach \y in {1, 2, 3, 4}
    {
      \addplot+ table[x index = 0, y index = \y] {\tableINPUT};
    }
  \end{axis}
\end{tikzpicture}%
  \tikzexternaldisable%
\\
  \tikzexternalenable%
  \tikzsetnextfilename{galerkin_qo_errors}%
  \begin{tikzpicture}[font = \plotfontsize]
    \pgfplotstableread{graphics/data/Galerkin5440_sin_r100_QuadOutputErrors.dat}\tableINPUT
  
  \begin{semilogyaxis}[%
    width  = .73\linewidth,
    height = .1\textheight,
    scale only axis,
    xmin = 0,
    xmax = 500,
    ymin = 1e-10,
    ymax = 1e-3,
    xminorticks = true,
    yminorticks = true,
    xlabel = {time $t$ (s)},
    ylabel = {$\relerr(t)$},
    ylabel style   = {yshift = -.3em},
    scaled x ticks = false,
    x tick label style = {/pgf/number format/1000 sep={\,}},
    y tick label style = {/pgf/number format/1000 sep={\,}},
    cycle list name    = plotlist
  ]

  \pgfplotsset{cycle list shift = 1}
  
    \foreach \y in {1, 2, 3}
    {
      \addplot+ table[x index = 0, y index = \y] {\tableINPUT};
    }
  \end{semilogyaxis}
\end{tikzpicture}%
  \tikzexternaldisable%
\\
    \caption{
    Quadratic output responses and pointwise relative errors~\cref{eq:pointwiseOutputError} of the full- and reduced-order models for $u(t)=u_{\sin}(t)$.
    }
    \label{fig:r100galerkin_qo}
    \end{subfigure}%

  \vspace{.5\baselineskip}
  \tikzexternalenable%
  \tikzsetnextfilename{galerkin_outputs_legend}%
  \begin{tikzpicture}
  \begin{axis}[%
    hide axis,
    width  = 1mm,
    height = 1mm,
    scale only axis,
    xmin = 0,
    xmax = 1,
    ymin = 0,
    ymax = 1,
    legend columns = 4, 
    legend style   = {
      at     = {(0,0)},
      anchor = center,
      /tikz/every even column/.append style = {column sep = 0.2cm}},
    legend cell align  = {left},
    clip mode          = individual,
    cycle list name    = plotlist]

    \foreach \y in {1,2,3,4}
    {
      \addplot+ coordinates{ (0, 0) };
    }
    \addlegendentry{\ensuremath{\mathsf{FOM}}}
    \addlegendentry{\LQOIRKAeigs{}}
    \addlegendentry{\LQOIRKAimag{}}
    \addlegendentry{\LQOBT{}}

  \end{axis}
\end{tikzpicture}%
  \tikzexternaldisable%

    \label{fig:galerkin_OutputErrors}
  \caption{
  Linear- and quadratic-output ($\expec_1$ and $\var_1$) responses and pointwise relative errors~\cref{eq:pointwiseOutputError} of the full-order and order $r=100$ reduced models of the stochastic Galerkin example driven by the input $u(t)=u_{\sin}(t)$.
  }
\end{figure*}

\begin{table*}[t!]
  \centering
    \resizebox{.6\linewidth}{!}{
    \begin{tabular}{lccc} %
      \hline\noalign{\smallskip}
        & \multicolumn{1}{c}{\LQOIRKAeigs{}}
        & \multicolumn{1}{c}{\LQOIRKAimag{}}
        & \multicolumn{1}{c}{\LQOBT{}}\\
      \noalign{\smallskip}\hline\noalign{\smallskip}
      $\relerr_{\CL_\infty}$ ($\expec_{1}$) 
        & $\boldsymbol{9.3188\texttt{e-}3}$
        & $2.3274\texttt{e-}2$
        & $1.1137\texttt{e-}2$\\
      $\relerr_{\CL_\infty}$ ($\var_{1}$) 
        & $5.3579\texttt{e-}3$
        & $\boldsymbol{4.2190\texttt{e-}3}$
        & $2.7215\texttt{e-}2$\\
      $\relerr_{\CL_2}$ ~($\expec_{1}$) 
        & $2.0612\texttt{e-}5$
        & $1.7362\texttt{e-}5$
        & $\boldsymbol{9.3407\texttt{e-}6}$\\
      $\relerr_{\CL_2}$~~($\var_{1}$) 
        & $2.9571\texttt{e-}5$
        & $\boldsymbol{1.6877\texttt{e-}5}$
        & $1.7295\texttt{e-}5$\\
      $\relerr_{\CH_2}$
        & $2.0819\texttt{e-}4$
        & $\boldsymbol{1.8725\texttt{e-}4}$
        & $2.9025\texttt{e-}4$\\
      \noalign{\smallskip}\hline\noalign{\smallskip}
    \end{tabular}%
    }%
    
  \caption{Relative errors~\cref{eq:relLinftyerror}~--~\cref{eq:relH2error} for the order $r=100$ \ROM{}s of the stochastic Galerkin example. The smallest error for each metric is highlighted in \textbf{boldface}.}
  \label{tab:relErrors_gal}
\end{table*}

We again test the performance of the computed \ROM{}s in reproducing the full-order $\expec_1$ and $\var_1$.
Three order $r=100$ \ROM{}s are computed using \LQOIRKAeigs{}, \LQOIRKAimag{}, and \LQOBT{}; the \interpOneStep{} approach is not included because it produced reduced models that were not asymptotically stable.
For the \LQOIRKA{} iterations, we set $\tau=10^{-4}$ and $M=100$; both iterations converged within these tolerances.
The \imagsf{} initialization exhibited similar convergence behavior to what is depicted in~\Cref{fig:advecdiff_conv}, although \LQOIRKAimag{} converged faster for this example, taking only $20$ iterations before the reduced model poles stopped changing within the prescribed $\tau$. 
On the other hand, for the \eigs{} initialization, the poles of the \LQOIRKAeigs{} iterates oscillated for the first $50$ iterations, before converging steadily after $57$ iterations.
For this example, the \LQOIRKAimag{}-\ROM{} took roughly the same amount of time to compute as the \LQOBT{}-\ROM{}, while the \LQOIRKAeigs{}-\ROM{} took the longest to compute.
In the interest of space, we do not plot the full convergence behavior or include the timings here; these results are available in the accompanying code package~\cite{supRei26}.
A single time-domain simulation is performed using the input $u_{\sin}(t)\defeq\sin(.2t)$ for $t\in[0,500].$
The reconstructed outputs $\expec_1$ and $\var_1$ under this forcing as well as the associated pointwise relative errors~\cref{eq:pointwiseOutputError} are plotted in
Figures~\ref{fig:r100galerkin_lo} and~\cref{fig:r100galerkin_qo}.
The error measures~\cref{eq:relLinftyerror}~--~\cref{eq:relH2error} are reported in~\Cref{tab:relErrors_gal}. 
We observe that the \LQOIRKA{}- and \LQOBT{}-\ROM{}s all produce quality approximations to $\expec_1$ and $\var_1$ for the input $u_{\sin}$.
Depending on the output and error measure, one \ROM{} may perform marginally better, e.g., \LQOIRKAimag{} reproduces $\var_{1}$ more accurately in both metrics, whereas \LQOIRKAeigs{} and \LQOBT{} reproduce $\expec_{1}$ more accurately in the relative $\CL_\infty$ and $\CL_2$ metrics, respectively.
For this example, the relative $\CH_2$ error in~\Cref{tab:relErrors_gal} suggests that \LQOIRKAeigs{} and \LQOIRKAimag{} identify distinct, but nearby, local minima of the $\CH_2$ error.

\section{Conclusion}
\label{sec:conclusions}
We have presented a novel $\CH_2$-optimality framework for the approximation of \LQO{} systems~\cref{eq:lqoSys} based on multivariate rational interpolation. 
In~\Cref{thm:lqoH2OptInterpolationCons}, we derive first-order optimality conditions; these amount to the mixed-multipoint tangential interpolation of the linear- and quadratic-output transfer functions~\cref{eq:lqoTfs}, and generalize the analogous interpolatory $\CH_2$-optimality framework for the approximation of \LTI{} systems. 
We additionally show how to enforce the derived optimality conditions simultaneously by Petrov-Galerkin projection in~\Cref{thm:enforceMixedInterp}.
Finally, an iterative rational Krylov algorithm for \LQO{} systems (\LQOIRKA{}) is proposed in~\Cref{alg:lqoIrka}. Numerical examples illustrate the effectiveness of the proposed approach and its potential for treating large-scale problems.
Based on this work, there are several open research directions to consider. Namely, the effect of inexact solves used to compute the bases in~\Cref{thm:enforceMixedInterp} on the interpolatory \LQO{}-\ROM{}s, a more detailed convergence analysis for systems with many inputs and outputs, and the development of structured optimality frameworks are all promising directions.
 
\appendix
\section{Proof of~\Cref{thm:H2PoleRes}}
\label{app:proofPoleRes}
\allowdisplaybreaks
We first prove a lemma that is used to bound certain quantities that appear in the proof of~\Cref{thm:H2PoleRes}.
\begin{lemma}
    \label{lemma:resolventBound}
    Let $\BG_2$ be the quadratic-output transfer function~\cref{eq:lqoQuadTf} of an \LQO{} system.
    Consider $z_1,z_2\in\C$ such that $z_1\BE-\BA$ and $z_2\BE-\BA$ are invertible. If $|z_i|>\|\BE^{-1}\BA\|_{2}$ for either $i=1$ or $i=2$,
    then $\BG_2(z_1,z_2)$ satisfies the bound
    \begin{equation}
    \label{eq:resolventBound}
        \|\BG_2(z_1,z_2)\|_{\frob}\leq \frac{C_i}{|z_i|-\|\BE^{-1}\BA\|_{2}}
        ~\,\mbox{for that}~\,i,
    \end{equation}
    where $C_i<\infty$ is a constant that does not depend on $z_i$.
\end{lemma}
\begin{proof}
    Assume $|z_1|>\|\BE^{-1}\BA\|_2$ without loss of generality.
    The bound~\cref{eq:resolventBound} follows by expanding $(z_1\BE-\BA)^{-1}$ in $\BG_2(z_1,z_2)$ as a Neumann series. Specifically, because $\|\BE^{-1}\BA\|_{2}/|z_1| < 1$, it follows that
    \begin{equation*}
        \left(z_1\BE-\BA\right)^{-1}=\frac{1}{z_1}\left(\BI-\frac{1}{z_1}\BE^{-1}\BA\right)^{-1}\BE^{-1}=\frac{1}{z_1}\sum_{k=0}^\infty\left(\frac{\BE^{-1}\BA}{z_1}\right)^k\BE^{-1}.
    \end{equation*}
    Taking the norm of both sides and evaluating the geometric series produces the bound
    \begin{align*}
        \|\left(z_1\BE-\BA\right)^{-1}\|_{2}\leq \frac{\|\BE^{-1}\|_2}{|z_1|}\sum_{k=0}^\infty\left(\frac{\|\BE^{-1}\BA\|_2}{|z_1|}\right)^k=\frac{\|\BE^{-1}\|_2}{|z_1|-\|\BE^{-1}\BA\|_2}.
    \end{align*}
    Using the fact that $\|\left(z_1\BE-\BA\right)^{-1}\|_{\frob}\leq\sqrt{n}\|\left(z_1\BE-\BA\right)^{-1}\|_2$, the bound~\cref{eq:resolventBound} follows:
    \begin{align*}
    \|\BG_2(z_1,z_2)\|_{\frob}&\leq\|\BM\|_{\frob}\|\left(z_1\BE-\BA\right)^{-1}\|_{\frob}\|\left(z_2\BE-\BA\right)^{-1}\|_{\frob}\|\BB\|_{\frob}^2\\
        &\leq\frac{\sqrt{n}\|\BE^{-1}\|_2\|\BM\|_{\frob}\|\left(z_2\BE-\BA\right)^{-1}\|_{\frob}\|\BB\|_{\frob}^2}{|z_1|-\|\BE^{-1}\BA\|_{2}}=\frac{C_1}{|z_1|-\|\BE^{-1}\BA\|_{2}},
    \end{align*}
    where $C_1$ does not depend on $z_1$, as claimed.
\end{proof}

We now proceed with the proof of~\Cref{thm:H2PoleRes}.
To begin, note that the first terms in~\cref{eq:H2ipTf} and~\cref{eq:H2ipPoleRes} are equal:
    \begin{equation*}
        \frac{1}{2\pi}\int_{-\infty}^\infty \trace\left(\BGonebar(-\imunit\omega)@ \BGonered(\imunit\omega)^{\trans}\right) \ds\omega=\sum_{i=1}^r \Bcred_i^{\trans}\BGonebar(-\lambdared_i)\Bbred_i.
    \end{equation*}
    This follows from classical results for calculating the Hardy $\CH_2$ inner product of two \LTI{} systems; see, e.g.,~\cite[Lemma~3.5]{GugAB08},~\cite[Lemma~2.1.4]{AntBG20}. 
    Then, to prove~\cref{eq:H2ipPoleRes} it suffices to prove the remaining equality
    \begin{equation}
    \label{eq:claim}
    \begin{aligned}
         \frac{1}{(2\pi)^2}\int_{-\infty}^\infty \int_{-\infty}^\infty \trace&\left(\BGtwobar(-\imunit\omega_1,-\imunit\omega_2)@ \BGtwored(\imunit\omega_1,\imunit\omega_2)^{\trans} \right) \ds\omega_1@ \ds\omega_2\\
         &=\sum_{j=1}^r\sum_{k=1}^r \Bmred_{j,k}^{\trans}\BGtwobar(-\lambdared_j,-\lambdared_k)(\Bbred_j\otimes\Bbred_k).
    \end{aligned}
    \end{equation}
    Let $R_1,R_2>0$ be arbitrarily chosen,
    and define the contours $\contourRi\subset \C$ as
    \begin{equation*}
        \contourRi\defeq [-\imunit R_i,\imunit R_i]\cup\{z=R_ie^{\imunit\theta}~\vert~ \pi/2\leq \theta\leq 3\pi/2\},~~i=1,2.
    \end{equation*}
    We may take $R_1$ and $R_2$ to be sufficiently large such that each contour encircles the poles of the reduced model. 
    Let $z\in\imunit\R$ be arbitrarily fixed and consider
    \begin{align*}
        \int_{\contourRone}\trace\left(\BGtwobar(-\zeta_1,-z)@ \BGtwored(\zeta_1,z)^{\trans} \right) \ds\zeta_1&=\imunit\int_{-R_1}^{R_1}\trace\left(\BGtwobar(-\imunit\omega_1,-z)@ \BGtwored(\imunit\omega_1,z)^{\trans} \right) \ds\omega_1\\
        +\,\imunit\int_{\pi/2}^{3\pi/2}\trace&\left(\BGtwobar(-R_1e^{\imunit\theta},-z)@ \BGtwored(R_1e^{\imunit\theta},z)^{\trans} \right) R_1e^{\imunit\theta} \, \ds\theta.
    \end{align*}
    Choose $R_1$ large enough so that $R_1>\|\overline{\BE}^{-1}\overline{\BA}\|_2, \|\BEr^{-1}\BAr\|_2$.
    Applying the Cauchy-Schwarz inequality in the Frobenius inner product and subsequently~\Cref{lemma:resolventBound} yields
    \begin{align*}
        \left|\trace\left(\BGtwobar(-R_1e^{\imunit\theta},-z)@ \BGtwored(R_1e^{\imunit\theta},z)^{\trans} \right)\right|
        &\leq \frac{C_1C_2}{(R_1-\|\overline{\BE}^{-1}\overline{\BA}\|_2)(R_1-\|\BEr^{-1}\BAr\|_2)}
    \end{align*}
    for constants $C_1,C_2<\infty$ that do not change with $R_1$; see the proof of~\Cref{lemma:resolventBound}.
    Using standard integral estimates, see, e.g.,~\cite[Ch.~IV, Eq.~(1.7)]{Gam03}, it follows that
    \begin{align*}
        \bigg|\int_{\pi/2}^{3\pi/2}\trace\left(\BGtwobar(-R_1e^{\imunit\theta},\right.&\left.-z)\BGtwored(R_1e^{\imunit\theta},z)^{\trans} \right) R_1e^{\imunit\theta} \ds\theta\bigg|\\
        &\leq \frac{\pi C_1 C_2 R_1}{(R_1-\|\overline{\BE}^{-1}\overline{\BA}\|_2)(R_1-\|\BEr^{-1}\BAr\|_2)}\longrightarrow 0~\,\mbox{as}~\,R_{1}\longrightarrow\infty.
    \end{align*}
    Because $R_1$ is arbitrary, we may take the limit as $R_1\to\infty$ to obtain the equality
    \begin{equation}
    \label{eq:contourInt1}
        \lim_{R_1\to\infty}\int_{\contourRone}\hspace{-4mm}\trace\left(\BGtwobar(-\zeta_1,-z) \BGtwored(\zeta_1,z)^{\trans} \right) \ds\zeta_1=\imunit\int_{-\infty}^\infty\hspace{-1mm}\trace\left(\BGtwobar(-\imunit\omega_1,-z)\BGtwored(\imunit\omega_1,z)^{\trans} \right)\ds\omega_1.
    \end{equation}
    Note that $\trace\left(\BGtwobar(-s_1,-z)@ \BGtwored(s_1,z)^{\trans} \right)$ is a scalar complex-valued function of the variable $s_1$ with poles at $-\mu_1,-\mu_2,\ldots,-\mu_n\in\C_+$ and \emph{simple poles} $\lambdared_1,\lambdared_2,\ldots,\lambdared_r\in\C_-$, where $\mu_i$ denotes the $i$-th eigenvalue of $\BE^{-1}\BA$. By the Residue Theorem~\cite[Ch.~VII]{Gam03}, we have that
    \begin{align*}
        \frac{1}{2\pi}\int_{-\infty}^\infty\trace&\left(\BGtwobar(-\imunit\omega_1,-z)@ \BGtwored(\imunit\omega_1,z)^{\trans} \right) \ds\omega_1\\
        &=\lim_{R_1\to\infty}\frac{1}{2\pi\imunit}\int_{\contourRone}\trace\left(\BGtwobar(-\zeta_1,-z)@ \BGtwored(\zeta_1,z)^{\trans} \right) \ds\zeta_1\\
        &=\sum_{j=1}^r\Res\left[\trace\left(\BGtwobar(-s_1,-z)@ \BGtwored(s_1,z)^{\trans}\right),s_1=\lambdared_j \right].
    \end{align*}
    Under the assumption that the poles $\lambdared_j$ are simple, for any fixed $z\in\imunit \R$ the residue of $\trace\left(\BGtwobar(-s_1,-z)@ \BGtwored(s_1,z)^{\trans}\right)$ at $s_1=\lambdared_j$ is given by
    \begin{align*}
        \Res&\left[\trace\left(\BGtwobar(-s_1,-z)@ \BGtwored(s_1,z)^{\trans}\right),s_1=\lambdared_j \right]\\
        &=\lim_{s_1\to\lambdared_j}(s_1-\lambdared_j)\trace\left(\BGtwobar(-s_1,-z)@ \BGtwored(s_1,z)^{\trans}\right)\\
        &= \trace\left(\BGtwobar(-\lambdared_j,-z)@ \lim_{s_1\to\lambdared_j}(s_1-\lambdared_j)\BGtwored(s_1,z)^{\trans}\right).
    \end{align*}
    Again, because the poles of $\Sysred$ are simple, $\BGtwored$ admits the pole-residue expansion in~\cref{eq:poleResidue}. Substituting in directly for~\cref{eq:poleResidue} yields:
    \begin{align*}
        \lim_{s_1\to\lambdared_j}(s_1-\lambdared_j)\BGtwored(s_1,z)^{\trans}
        &=\sum_{k=1}^r\frac{(\Bbred_j\otimes\Bbred_k)\Bmred_{j,k}^{\trans}}{z-\lambdared_k},~~\mbox{and thus}\\
        \Res\left[\trace\left(\BGtwobar(-s_1,-z)@ \BGtwored(s_1,z)^{\trans}\right),s_1=\lambdared_j \right]
        &= \trace\left(\BGtwobar(-\lambdared_j,-z)@ \sum_{k=1}^r\frac{(\Bbred_j\otimes\Bbred_k)\Bmred_{j,k}^{\trans}}{z-\lambdared_k}\right)
    \end{align*}
    for each $j=1,\ldots,r$.
    Substituting this into the previously computed contour integral~\cref{eq:contourInt1}, at last we have that
    \begin{align*}
        \frac{1}{2\pi}\int_{-\infty}^\infty\trace&\left(\BGtwobar(-\imunit\omega_1,-z)@ \BGtwored(\imunit\omega_1,z)^{\trans} \right) \ds\omega_1\\ &= \sum_{j=1}^r  \trace\left(\BGtwobar(-\lambdared_j,-z)@ \sum_{k=1}^r\frac{(\Bbred_j\otimes\Bbred_k)\Bmred_{j,k}^{\trans}}{z-\lambdared_k}\right)\\
        &=\sum_{j=1}^r \sum_{k=1}^r \Bmred_{j,k}^{\trans}\BGtwobar(-\lambdared_j,-z)(\Bbred_j\otimes\Bbred_k) \frac{1}{z-\lambdared_k},
    \end{align*}
    where the final equality follows from the fact that the trace operator $\trace\left(\cdot\right)$ is invariant under cyclic permutations, and that the trace of a scalar is just that scalar.
    Returning to the desired equality in~\cref{eq:claim}, our calculations up to this point yield
    \begin{align}
    \begin{split}
    \label{eq:simpClaim}
         &\frac{1}{(2\pi)^2}\int_{-\infty}^\infty \int_{-\infty}^\infty \trace\left(\BGtwobar(-\imunit\omega_1,-\imunit\omega_2)@ \BGtwored(\imunit\omega_1,\imunit\omega_2)^{\trans} \right) @\ds\omega_1@ \ds\omega_2\\
         &~~~~~~=\sum_{j=1}^r\sum_{k=1}^r\frac{1}{2\pi}\int_{-\infty}^\infty   \Bmred_{j,k}^{\trans}\BGtwobar(-\lambdared_j,-\imunit\omega_2)(\Bbred_j\otimes\Bbred_k)\frac{1}{\imunit\omega_2-\lambdared_k} @\ds\omega_2.
     \end{split}
    \end{align}
    Note that  
    \begin{align*}
        \int_{\contourRtwo} \Bmred_{j,k}^{\trans}\BGtwobar(-\lambdared_j,-\zeta_2)&(\Bbred_j\otimes\Bbred_k)\frac{1}{\zeta_2-\lambdared_k} @\ds\zeta_2\\&=\imunit\int_{-R_2}^{R_2}  \Bmred_{j,k}^{\trans}\BGtwobar(-\lambdared_j,-\imunit\omega_2)(\Bbred_j\otimes\Bbred_k)\frac{1}{\imunit\omega_2-\lambdared_k} @\ds\omega_2\\
        +\imunit\int_{\pi/2}^{3\pi/2}  \Bmred_{j,k}^{\trans}&\BGtwobar(-\lambdared_j,-R_2e^{\imunit\theta})(\Bbred_j\otimes\Bbred_k)\frac{1}{R_2e^{\imunit\theta}-\lambdared_k} R_2e^{\imunit\theta}@\ds\theta.
    \end{align*}
    We may choose $R_2>0$ large enough so that ~\Cref{lemma:resolventBound} applies, and thus we have the inequality $\|\BGtwobar(-\lambdared_j,-R_2e^{\imunit\theta})\|_{\frob}\leq C_{2}/(R_2-\|\overline{\BE}^{-1}\overline{\BA}\|_2)$ for all $\pi/2\leq\theta\leq3\pi/2$. Using a similar argument as before, one follows that
    \begin{equation*}
        \left|\int_{\pi/2}^{3\pi/2}  \Bmred_{j,k}^{\trans}\BGtwobar(-\lambdared_j,-R_2e^{\imunit\theta})(\Bbred_j\otimes\Bbred_k)\frac{1}{R_2e^{\imunit\theta}-\lambdared_k} R_2e^{\imunit\theta}@\ds\theta\right|
        \longrightarrow 0~~\mbox{as}~~R_2\longrightarrow\infty.
    \end{equation*}
    In the limit as $R_2\to\infty$, it follows that
    \begin{align*}
       & \lim_{R_2\to\infty} \int_{\contourRtwo} \Bmred_{j,k}^{\trans}\BGtwobar(-\lambdared_j,-\zeta_2)(\Bbred_j\otimes\Bbred_k)\frac{1}{\zeta_2-\lambdared_k} @\ds\zeta_2\\
       &~~~~~~= \imunit \int_{-\infty}^\infty   \Bmred_{j,k}^{\trans}\BGtwobar(-\lambdared_j,-\imunit\omega_2)(\Bbred_j\otimes\Bbred_k)\frac{1}{\imunit\omega_2-\lambdared_k} @\ds\omega_2.
    \end{align*}
    At this point, each integral appearing within the nested sum in the simplified expression~\cref{eq:simpClaim}
    can be evaluated by a straightforward application of the Residue Theorem.
    For each $j,k=1,\ldots,r$, the integrand $\Bmred_{j,k}^{\trans}\BGtwobar(-\lambdared_j,-z)(\Bbred_j\otimes\Bbred_k)/(z-\lambdared_k)$ is a scalar complex-valued rational function with poles at $-\mu_1,\ldots,-\mu_n\in\C_+$ (the eigenvalues of $\BE^{-1}\BA$) and $\lambdared_k\in\C_-$. Thus, for each $j,k=1,\ldots,r$, we have
    \begin{align*}
        &\frac{1}{2\pi}\int_{-\infty}^\infty   \Bmred_{j,k}^{\trans}\BGtwobar(-\lambdared_j,-\imunit\omega_2)(\Bbred_j\otimes\Bbred_k)\frac{1}{\imunit\omega_2-\lambdared_k} \ds\omega_2\\
        &~~~~~~=\lim_{R_2\to\infty}\frac{1}{2\pi\imunit} \int_{\contourRtwo} \Bmred_{j,k}^{\trans}\BGtwobar(-\lambdared_j,-\zeta_2)(\Bbred_j\otimes\Bbred_k)\frac{1}{\zeta_2-\lambdared_k} @\ds\zeta_2\\
        &~~~~~~=\Res\left[\Bmred_{j,k}^{\trans}\BGtwobar(-\lambdared_j,-s_2)(\Bbred_j\otimes\Bbred_k)\frac{1}{s_2-\lambdared_k}, @s_2=\lambdared_k\right]\\
        &~~~~~~=\Bmred_{j,k}^{\trans}\BGtwobar(-\lambdared_j,-\lambdared_k)(\Bbred_j\otimes\Bbred_k).
    \end{align*}
    Finally, plugging this into~\cref{eq:simpClaim} yields
        \begin{align*}
         &\frac{1}{(2\pi)^2}\int_{-\infty}^\infty \int_{-\infty}^\infty \trace\left(\BGtwobar(-\imunit\omega_1,-\imunit\omega_2)@ \BGtwored(\imunit\omega_1,\imunit\omega_2)^{\trans} \right) @\ds\omega_1@\ds\omega_2\\
            &~~~~~~=\sum_{j=1}^r\sum_{k=1}^r\Bmred_{j,k}^{\trans}\BGtwobar(-\lambdared_j,-\lambdared_k)(\Bbred_j\otimes\Bbred_k),
    \end{align*}
    which is the desired formula~\cref{eq:claim}, and thus the inner product formula in~\cref{eq:H2ipPoleRes}.
    The formula for the $\CH_2$ norm in~\cref{eq:H2normPoleRes} then follows directly by applying~\cref{eq:H2ipPoleRes} for $\Sys=\Sysred$.

\section{Proof of~\Cref{thm:lqoH2OptInterpolationCons}}
\label{app:proofH2opt}
\allowdisplaybreaks
    Recall from the sketch of the proof of~\Cref{thm:lqoH2OptInterpolationCons} that $\Syshat$ is any order-$r$, asymptotically stable \LQO{} system defined according to~\cref{eq:lqoSysRed} such that $\Syshat$ exists in a local neighborhood about $\Sysred$ and is not a locally-optimal $\CH_2$ approximation of $\Sys$. This leads to the inequality
    \begin{align}
        \begin{split}
        \label{eq:inequalityStar}
        \Rightarrow~~~ 0&\leq2\real @\langle\BG_1-\BGonered,@\BGonered-\BGonehat\rangle_{\CH_2^{p\times m}} + \|\BGonered - \BGonehat\|_{\CH_2^{p\times m}}^2\\
        &~~~~~+ 2\real @\langle\BG_2-\BGtwored,@\BGtwored-\BGtwohat\rangle_{\CH_2^{p\times m^2}} + \|\BGtwored - \BGtwohat\|_{\CH_2^{p\times m^2}}^2.
        \end{split}
    \end{align}
    Henceforth, we drop the matrix dimensions when invoking the Hardy space norms and inner products of the transfer functions~\cref{eq:lqoTfs} since they will be clear from context.
    Take $\varepsilon>0$ to be arbitrarily specified, and $\bxi$ to be an arbitrary unit vector in $\Cp$ or $\Cm$, depending on the setting.
    We will prove each set of interpolation conditions in~\cref{eq:H2OptConds} by choosing $\BGonehat$ and $\BGtwohat$ to differ from the $\CH_2$-optimal transfer functions $\BGonered$ and $\BGtwored$ by carefully chosen $\varepsilon$-perturbations of the poles and residue directions~\cref{eq:resComponents} of the optimal transfer functions. 
    Because the state-space matrices in~\cref{eq:lqoSys} and~\cref{eq:lqoSysRed} are assumed real, we take for granted that $\BGonebar(s)=\BG_1(s)$ and $\BGtwobar(s_1,s_2)=\BG_2(s_1,s_2)$ for all $s,s_1,s_2\in\C$ (and likewise for the transfer functions of~\cref{eq:lqoSysRed}) when invoking~\Cref{thm:H2PoleRes}, where $\BGonebar(s)$ and $\BGtwobar(s_1,s_2)$ are defined according to~\cref{eq:conjTf}.

    Because the conditions in~\cref{eq:H1RightCon} relate to the purely linear output, their derivation follows similarly to that of~\cite[Thm.~5.1.1]{AntBG20} for deriving the linear $\CH_2$-optimality conditions.
    For the sake of contradiction assume that the $(j,k)$-th interpolation condition in~\cref{eq:H2RightCon} does not hold.
    Define $\Syshat$ to be the system obtained by perturbing the $(j,k)$-th residue direction $\Bmred_{j,k}$ of $\BGtwored$ by $-\varepsilon e^{\imunit\theta}\bxi$ for $\theta\in\C$ that is to-be-defined. 
    In other words, the transfer functions of $\Syshat$ are defined as
    \begin{equation}
    \label{eq:pertM_tfs}
        \BGonehat(s)=\BGonered(s)~~\mbox{and}~~\BGtwored(s_1,s_2)-\BGtwohat(s_1,s_2)=\varepsilon e^{\imunit\theta}\frac{\bxi(\Bbred_j\otimes\Bbred_k)^{\trans}}{(s_1-\lambdared_j)(s_2-\lambdared_k)},
    \end{equation}
    where we choose $\theta\in\C$ to be
    \begin{equation*}
        \theta\defeq \pi-\arg\underbrace{\left(\bxi^{\trans}\left(\BG_2(-\lambdared_j,-\lambdared_k)-\BGtwored(-\lambdared_j,-\lambdared_k)\right)(\Bbred_j\otimes\Bbred_k)\right)}_{\defeq \,z}=\pi-\arg(z).
    \end{equation*}
    Note that $\theta$ is well-defined because we have assumed that the $(j,k)$-th condition~\cref{eq:H2RightCon} and $\bxi\in\Cp$ are nonzero. 
    Applying the formulae~\cref{eq:H2ipPoleRes} and~\cref{eq:H2normPoleRes} to the quantities in~\cref{eq:inequalityStar} for $\BGonehat$ and $\BGtwohat$ in~\cref{eq:pertM_tfs} as well as using the identity $z=|z|e^{\imunit\arg\left(z\right)}$ yields
    \begin{align*}
        \langle\BG_2-\BGtwored @,\BGtwored-\BGtwohat\rangle_{\CH_2}&=\varepsilon  e^{\imunit (\pi-\arg(z))} \bxi^{\trans}\left({\BG}_2(-\lambdared_j,-\lambdared_k)-{\widetilde{\BG}}_2(-\lambdared_j,-\lambdared_k)\right)(\Bbred_j\otimes\Bbred_k)\\
        &=-\varepsilon \left|\bxi^{\trans}\left(\BG_2(-\lambdared_j,-\lambdared_k)-\BGtwored(-\lambdared_j,-\lambdared_k)\right)(\Bbred_j\otimes\Bbred_k)\right|\neq 0,\\
        \mbox{and}~~\|\BGtwored-\BGtwohat\|_{\CH_2}^2&=\varepsilon^2|e^{\imunit\theta}|^2\bxi^{\trans}\left(\BGtwoRedBar(-\lambdared_j,-\lambdared_k)-\BGtwoCheckBar(-\lambdared_j,-\lambdared_k)\right)(\Bbred_j\otimes\Bbred_k)\\
        &=\varepsilon^2\frac{\|(\Bbred_j\otimes\Bbred_k)\|_2^2}{4\real(\lambdared_j)\real(\lambdared_k)}=O(\varepsilon^2).
    \end{align*}
    Clearly, $\langle\BG_1-\BGonered @,\BGonered-\BGonehat\rangle_{\CH_2}=\|\BG_1-\BGonered\|_{\CH_2}^2=0$ and $\|\BGtwored-\BGtwohat\|_{\CH_2}^2\geq 0$.
    Substituting the above calculations into~\cref{eq:inequalityStar}, we obtain
    \begin{equation*}
        0\leq-\varepsilon\left|\bxi^{\trans}\left(\BG_2(-\lambdared_j,-\lambdared_k)-\BGtwored(-\lambdared_j,-\lambdared_k)\right)(\Bbred_j\otimes\Bbred_k)\right|+ O\left(\varepsilon^2\right).
    \end{equation*}
    Since $\varepsilon>0$ is arbitrary, we may take $\varepsilon$ to be sufficiently small such that the negative $O\left(\varepsilon\right)$ term dominates the $O\left(\varepsilon^2\right)$ term, which is a contradiction. 
    However, we assumed initially that the $(j,k)$-th condition in~\cref{eq:H2RightCon} does not hold. Therefore, we must conclude by contradiction that it does.
    Repeating this argument for all $j,k$ gives
    \begin{equation*}
        \left(\BG_2(-\lambdared_j,-\lambdared_k)-\BGtwored(-\lambdared_j,-\lambdared_k)\right)(\Bbred_j\otimes\Bbred_k)=\Bzero_p~\,\mbox{for each}~\,j,k=1,\ldots,r,
    \end{equation*}
    which are precisely the right tangential conditions in~\cref{eq:H2RightCon}.

    Next, assume that the $k$-th condition in~\cref{eq:H1H2MixedLeftCon} does not hold.
    We obtain $\Syshat$ by applying the perturbation $-\varepsilon e^{\imunit\theta}\bxi$ to $\Bbred_k$ in~\cref{eq:poleResidue}, where $\theta$ is to-be-redefined (but using the same notation as before) and $\bxi\in\Cm$.
    The transfer functions of $\Syshat$ are
   \begin{align}
   \begin{split}
   \label{eq:pertB_Tfs}
        \BGonered(s)-\BGonehat(s)&=\varepsilon e^{\imunit 
        \theta}\frac{\Bcred_k\bxi^{\trans}}{s-\lambdared_k}~~\mbox{and}\\
        \BGtwored(s_1,s_2)-\BGtwohat(s_1,s_2)&= 
        \varepsilon e^{\imunit\theta}\sum_{\ell=1}^r\frac{\Bmred_{\ell,k}(\Bbred_\ell\otimes \bxi)^{\trans}}{(s_1-\lambdared_\ell)(s_2-\lambdared_k)} \\
        +\varepsilon e^{\imunit\theta}\sum_{\ell=1}^r&\frac{\Bmred_{k,\ell}( \bxi\otimes \Bbred_\ell)^{\trans}}{(s_1-\lambdared_k)(s_2-\lambdared_\ell)}-\varepsilon^2 e^{2\imunit\theta}\frac{\Bmred_{k,k}\left(\bxi\otimes\bxi\right)^{\trans}}{(s_1-\lambdared_k)(s_2-\lambdared_k)}.
    \end{split}
    \end{align}
    Implicitly, we have used the fact that the Kronecker product is bilinear~\cite{Bre78} in simplifying the expression for $\BGtwored-\BGtwohat$.
    We redefine $\theta\in\C$ as
    \begin{align}
        \theta\defeq\pi-\arg&\left[\Bcred_k^{\trans}\left(\BG_1(-\lambdared_k)-\BGonered(-\lambdared_k)\right)\bxi\right. \nonumber \\
        &~~~~~~~+\sum_{\ell=1}^r\Bmred_{k,\ell}^{\trans} \left(\BG_2(-\lambdared_k,-\lambdared_\ell)-\BGtwored(-\lambdared_k,-\lambdared_\ell)\right)(\BI_m\otimes \Bbred_\ell)\bxi \label{eq:deftheta}\\
        &~~~~~~~\left.+\sum _{\ell=1}^r\Bmred_{\ell,k}^{\trans} \left(\BG_2(-\lambdared_\ell,-\lambdared_k)-\BGtwored(-\lambdared_\ell,-\lambdared_k)\right)(\Bbred_\ell     \otimes \BI_m)\bxi\right], \nonumber
    \end{align}
    which is well-defined, since the quantity in the argument is nonzero.
    As before, we apply the formulae in~\Cref{thm:H2PoleRes} to compute the relevant terms in~\cref{eq:inequalityStar}. First, by~\cref{eq:H2ipPoleRes}, the inner products in~\cref{eq:inequalityStar} for $\BGonehat$ and $\BGtwohat$ in~\cref{eq:pertB_Tfs} are
    \begin{subequations}
    \begin{align}
    \begin{split}
    \label{eq:pertB_Ips}
        \langle \BG_1-\BGonered,&@\BGonered-\BGonehat \rangle_{\CH_2}= \varepsilon e^{\imunit\theta}\,\Bcred_k^{\trans}\left(\BG_1(-\lambdared_k)-\BGonered(-\lambdared_k)\right)\bxi\\
        \langle \BG_2-\BGtwored,&@\BGtwored-\BGtwohat \rangle_{\CH_2} = \\
        \varepsilon e^{\imunit\theta}\bigg[&\sum_{\ell=1}^r\Bmred_{\ell,k}^{\trans} \left(\BG_2(-\lambdared_\ell,-\lambdared_k)-\BGtwored(-\lambdared_\ell,-\lambdared_k)\right)(\Bbred_\ell\otimes \BI_m)\bxi\\
        +&\sum_{\ell=1}^r\Bmred_{k,\ell}^{\trans} 
        \left(\BG_2(-\lambdared_k,-\lambdared_\ell)-\BGtwored(-\lambdared_k,-\lambdared_\ell)\right)( \BI_m\otimes\Bbred_\ell)\bxi\bigg]\\
        -&\varepsilon^2 e^{2\imunit \theta} \Bmred_{k,k}^{\trans}\left(\BG_2(-\lambdared_k,-\lambdared_k)-\BGtwored(-\lambdared_k,-\lambdared_k)\right)(\bxi\otimes\bxi).
    \end{split}
    \end{align}
    In the latter inner product, we have used the fact that $(\Bbred_i\otimes \bxi)=(\Bbred_i\otimes \BI_m)\bxi$ and $( \bxi\otimes\Bbred_j)=(\BI_m\otimes\Bbred_j)\bxi$, which follows straightforwardly from the definition of the Kronecker product.
    By~\cref{eq:H2normPoleRes}, the norms of $\BGonered-\BGonehat$ and $\BGtwored-\BGtwohat$ are
    \begin{align}
        \label{eq:pertB_G1norm}
        \|\BGonered-\BGonehat\|_{\CH_2}^2&=\varepsilon^2 \frac{\|\Bcred_k\|_2^2}{-2\real(\lambdared_k)}=O\left(\varepsilon^2\right),\\
        \nonumber
        \|\BGtwored-\BGtwohat\|_{\CH_2}^2&=\varepsilon |e^{\imunit \theta}|\sum_{i=1}^r \Bmred_{i,k}^{\trans}\left(\BGtwoRedBar(-\lambdared_i,-\lambdared_k)-\BGtwoCheckBar(-\lambdared_i,-\lambdared_k)\right)(\Bbred_i\otimes\BI_m)\bxi\\
        \nonumber
        +&\varepsilon |e^{\imunit \theta}|\sum_{j=1}^r\Bmred_{k,j}^{\trans}\left(\BGtwoRedBar(-\lambdared_k,-\lambdared_j)-\BGtwoCheckBar(-\lambdared_k,-\lambdared_j)\right)(\BI_m\otimes \Bbred_j)\bxi\\
        \nonumber
        -&\varepsilon^2|e^{2\imunit \theta}|\Bmred_{k,k}^{\trans}\left(\BGtwoRedBar(-\lambdared_k,-\lambdared_k)-\BGtwoCheckBar(-\lambdared_k,-\lambdared_k)\right)(\bxi\otimes\bxi).
    \end{align}
    Substituting directly for the pole residue form of the error function $\BGtwored-\BGtwohat$ in~\cref{eq:pertB_Tfs} allows us to realize the norm of $\BGtwored-\BGtwohat$ as an $O\left(\varepsilon^2\right)$ term, i.e.,
    \begin{align}
    \begin{split}
    \label{eq:pertB_G2norm}
        \|\BGtwored &-\BGtwohat\|_{\CH_2}^2
        =\varepsilon^2 \sum_{i=1}^r \Bmred_{i,k}^{\trans}\bigg[\sum_{{\ell}=1}^r \frac{\overline{\Bmred}_{{\ell},k}(\overline{\Bbred}_{\ell}\otimes \bxi)^{\trans}}{(-\lambdared_i-\overline{\lambda}_{\ell})(-2\real(\lambdared_k))}\\
        +\sum_{{\ell}=1}^r &\frac{\overline{\Bmred}_{k,{\ell}}( \bxi\otimes \overline{\Bbred}_{\ell})^{\trans}}{(-\lambdared_i-\overline{\lambda}_k)(-\lambdared_k-\overline{\lambda}_{\ell})}\bigg](\overline{\Bbred}_i\otimes\BI_m)\bxi 
        +\varepsilon^2\sum_{j=1}^r\Bmred_{k,j}^{\trans}\bigg[\sum_{{\ell}=1}^r \frac{\overline{\Bmred}_{{\ell},k}(\overline{\Bbred}_{\ell}\otimes \bxi)^{\trans}}{(-\lambdared_k-\overline{\lambda}_{\ell})(-\lambdared_j-\overline{\lambda}_k)}\\
        &\hphantom{-\BGtwohat\|_{\CH_2}^2}+\sum_{{\ell}=1}^r \frac{\overline{\Bmred}_{{k,\ell}}( \bxi\otimes \overline{\Bbred}_{\ell})^{\trans}}{(-2\real(\lambdared_k))(-\lambdared_k-\overline{\lambda}_{\ell})}\bigg](\BI_m\otimes \Bbred_j)\bxi+O\left(\varepsilon^4\right)=O\left(\varepsilon^2\right).
    \end{split}
    \end{align}
    Then, substituting our calculations~\cref{eq:pertB_Ips}~--~\cref{eq:pertB_G2norm} into~\cref{eq:inequalityStar} and using the definition of $\theta$ in~\cref{eq:deftheta} yields
    \begin{align*}
        0&\leq -\varepsilon \left|\Bcred_k^{\trans}\left(\BG_1(-\lambdared_k)-\BGonered(-\lambdared_k)\right)\bxi\vphantom{\sum_{i=1}^r}\right.\\
        &~~~~~~~~~~+\left.\sum_{\ell=1}^r\Bmred_{k,\ell}^{\trans} \left(\BG_2(-\lambdared_k,-\lambdared_\ell)-\BGtwored(-\lambdared_k,-\lambdared_\ell)\right)(\BI_m\otimes\Bbred_\ell)\bxi\right.\\
        &~~~~~~~~~~+\left.\sum_{\ell=1}^r\Bmred_{\ell,k}^{\trans} \left(\BG_2(-\lambdared_\ell,-\lambdared_k)-\BGtwored(-\lambdared_\ell,-\lambdared_k)\right)(\Bbred_\ell\otimes\BI_m)\bxi\right| +{O(\varepsilon^2)}.
    \end{align*}
    For sufficiently small $\varepsilon > 0$, this yields a contradiction. 
    Because $\bxi\neq \Bzero_{m}$, we conclude
    \begin{align*}
        &\Bcred_k^{\trans}\left(\BG_1(-\lambdared_k)-\BGonered(-\lambdared_k)\right)+\sum_{\ell=1}^r\Bmred_{k,\ell}^{\trans} \left(\BG_2(-\lambdared_k,-\lambdared_\ell)-\BGtwored(-\lambdared_k,-\lambdared_\ell)\right)(\BI_m\otimes\Bbred_\ell) \\
        &~~~~~~+\sum_{\ell=1}^r\Bmred_{\ell,k}^{\trans} \left(\BG_2(-\lambdared_\ell,-\lambdared_k)-\BGtwored(-\lambdared_\ell,-\lambdared_k)\right)( \Bbred_\ell\otimes\BI_m)=\Bzero_m^{\trans}~~\mbox{for}~k=1,\ldots,r,
    \end{align*}
    by repeating this argument for all $k$, thereby proving~\cref{eq:H1H2MixedLeftCon}.
    \end{subequations}

    Finally, we prove the bi-tangential Hermite condition in~\cref{eq:H1H2MixedHermiteCon}. 
    As before, we assume that the $k$-th condition in~\cref{eq:H1H2MixedHermiteCon} does not hold. Redefine $\theta\in\C$ as
    \begin{align}
    \begin{split}
    \label{eq:pertPole_theta}
        \theta\defeq-\arg&\left[\Bcred_k^{\trans}\left(\frac{d}{ds}\BG_1(-\lambdared_k)-\frac{d}{ds}\BGonered(-\lambdared_k)\right)\Bbred_k\right.\\
        &\left.+\sum_{\ell=1}^r\Bmred_{k,\ell}^{\trans} \left(\frac{\partial}{\partial s_1}\BG_2(-\lambdared_k,-\lambdared_\ell)-\frac{\partial}{\partial s_1}\BGtwored(-\lambdared_k,-\lambdared_\ell)\right)(\Bbred_k\otimes \Bbred_\ell)\right.\\
        &\left.+\sum _{\ell=1}^r\Bmred_{\ell,k}^{\trans} \left(\frac{\partial}{\partial s_2}\BG_2(-\lambdared_\ell,-\lambdared_k)-\frac{\partial}{\partial s_2}\BGtwored(-\lambdared_\ell,-\lambdared_k)\right)( \Bbred_\ell\otimes\Bbred_k)\right].
    \end{split}
    \end{align}
    Take $\varepsilon >0$ to be small enough so that $\eta_k\defeq\lambdared_k+\varepsilon e^{\imunit\theta}$
    does not coincide with any of the remaining poles of $\Sysred$ and $\real\left(\eta_k\right)<0.$
    We obtain $\Syshat$ by replacing the $k$-th pole $\lambdared_k$ of $\Sysred$ with $\eta_k$ defined above. 
    Then, the transfer functions of $\Syshat$ are 
    \begin{align}
        \begin{split}
        \label{eq:pertPole_Tfs}
            \BGonered(s)-\BGonehat(s)=&~\Bcred_k\Bbred_k^{\trans}\left(\frac{1}{s-\lambdared_k} -\frac{1}{s-\eta_k}\right)\\
            \mbox{and}~~\BGtwored(s_1,s_2)-\BGtwohat(s_1,s_2)=& \sum_{\ell\neq k}^r\frac{\Bmred_{\ell,k}(\Bbred_\ell\otimes\Bbred_k)^{\trans}}{s_1-\lambdared_\ell}\left(\frac{1}{s_2-\lambdared_k}-\frac{1}{s_2-\eta_k}\right)\\
            +&\sum_{\ell\neq k}^r\left(\frac{1}{s_1-\lambdared_k}-\frac{1}{s_1-\eta_k}\right)\frac{\Bmred_{k,\ell}(\Bbred_k\otimes\Bbred_\ell)^{\trans}}{s_2-\lambdared_\ell}\\
            +~\Bmred_{k,k}(\Bbred_k\otimes\Bbred_k)^{\trans}&\left(\frac{1}{(s_1-\lambdared_k)(s_2-\lambdared_k)}-\frac{1}{(s_1-\eta_k)(s_2-\eta_k)}\right).
        \end{split}
    \end{align}
    We observe that the error function $\BGonered-\BGonehat$ in~\cref{eq:pertPole_Tfs} has two poles $\lambdared_k$ and $\eta_k$ corresponding to the residues $\Bcred_k\Bbred_k^{\trans}$ and $-\Bcred_k\Bbred_k^{\trans}$. Thus, applying~\cref{eq:H2ipPoleRes} yields
    \begin{equation*}
        \langle\BG_1-\BGonered,\BGonered-\BGonehat\rangle_{\CH_2}=\Bcred_k^{\trans}\bigg(\underbrace{{\BG_1(-\lambdared_k)-\BGonered(-\lambdared_k)\bigg)\Bbred_k}}_{=\,\Bzero_p~\,\textrm{by~\cref{eq:H1RightCon}}} -\Bcred_k^{\trans}\bigg(\BG_1(-\eta_k)-\BGonered(-\eta_k)\bigg)\Bbred_k.
    \end{equation*}
    To resolve this further, we recognize that $\BG_1(s)$ and $\BGonered(s)$ are both analytic at $s=-\lambdared_k$, and thus admit power series representations about this point. Expanding each about $s=-\lambdared_k$ and evaluating at $s=-\eta_k$ gives
    \begin{subequations}
    \begin{align}
        \nonumber
        \langle\BG_1-\BGonered,\BGonered-\BGonehat\rangle_{\CH_2}&=-\Bcred_k^{\trans}\bigg(\BG_1(-\eta_k)-\BGonered(-\eta_k)\bigg)\Bbred_k\\
        \nonumber
        &=-\Bcred_k^{\trans}\bigg[\bigg(\BG_1(-\lambdared_k)+(\underbrace{-\eta_k-\lambdared_k}_{=-\varepsilon e^{\imunit \theta}})\frac{d}{ds}\BG_1(-\lambdared_k)+O\left(\varepsilon^2\right)\bigg)\\
        \nonumber
        &\hphantom{-\Bcred_k^{\trans}\bigg[}-\bigg(\BGonered(-\lambdared_k)+(\underbrace{-\eta_k-\lambdared_k}_{=-\varepsilon e^{\imunit \theta}})\frac{d}{ds}\BGonered(-\lambdared_k)+O\left(\varepsilon^2\right)\bigg)\bigg]\Bbred_k\\
        \label{eq:pertPole_G1ip}
        &=-\varepsilon e^{\imunit\theta}\Bcred_k^{\trans}\left(\frac{d}{ds}\BGonered(-\lambdared_k)-\frac{d}{ds}\BG_1(-\lambdared_k)\right)\Bbred_k+O\left(\varepsilon^2\right),
    \end{align}
    since $(\BG_1(-\lambdared_k)-\BGonered(-\lambdared_k))\Bbred_k=\Bzero_p$ by~\cref{eq:H1RightCon}. Accounting for all the pole-residue pairs of $\BGtwored-\BGtwohat$ in~\cref{eq:pertPole_Tfs}, applying~\cref{eq:H2ipPoleRes} yields
    \begin{align}
    \begin{split}
    \label{eq:pertPole_G2ip}
        \langle\BG_2-\BGtwored, \BGtwored-\BGtwohat\rangle_{\CH_2}&={\sum_{i\neq k}^r\Bmred_{i,k}^{\trans}\bigg(\underbrace{\BG_2(-\lambdared_i,-\lambdared_k)-\BGtwored(-\lambdared_i,-\lambdared_k)\bigg)(\Bbred_i\otimes\Bbred_k)}_{=\Bzero_p~\textrm{by \cref{eq:H2RightCon}}}}\\
        &-\sum_{i\neq k}^r\Bmred_{i,k}^{\trans}\bigg(\BG_2(-\lambdared_i,-\eta_k)-\BGtwored(-\lambdared_i,-\eta_k)\bigg)(\Bbred_i\otimes\Bbred_k)\\
        &+{\sum_{j\neq k}^r\Bmred_{k,j}^{\trans}\bigg(\underbrace{\BG_2(-\lambdared_k,-\lambdared_j)-\BGtwored(-\lambdared_k,-\lambdared_j)\bigg)(\Bbred_k\otimes\Bbred_j)}_{=\Bzero_p~\textrm{by \cref{eq:H2RightCon}}}}\\\
        &-\sum_{j\neq k}^r\Bmred_{k,j}^{\trans}\bigg(\BG_2(-\eta_k,-\lambdared_j)-\BGtwored(-\eta_k,-\lambdared_j)\bigg)(\Bbred_k\otimes\Bbred_j)\\
        &\hphantom{\sum_{j\neq k}^r}+{\Bmred_{k,k}^{\trans}\bigg(\underbrace{\BG_2(-\lambdared_k,-\lambdared_k)-\BGtwored(-\lambdared_k,-\lambdared_k)\bigg)(\Bbred_k\otimes\Bbred_k)}_{=\Bzero_p~\textrm{by \cref{eq:H2RightCon}}}}\\\
        &\hphantom{\sum_{j\neq k}^r}-\Bmred_{k,k}^{\trans}\bigg(\BG_2(-\eta_k,-\eta_k)-\BGtwored(-\eta_k,-\eta_k)\bigg)(\Bbred_k\otimes\Bbred_k).
    \end{split}
    \end{align}
    Both $\BG_2(s_1,s_2)$ and $\BGtwored(s_1,s_2)$ are analytic at $s=-\lambdared_k$ in each separate argument, and thus admit power series expansions about this point.
    Expanding $\BG_2(-\lambdared_i,s_2)-\BGtwored(-\lambdared_i,s_2)$ in $s_2$ about $-\lambdared_k$ and evaluating at $s_2=\eta_k$ for each $i\neq k$ gives
    \allowdisplaybreaks[0]
    \begin{align*}
        \Bmred_{i,k}^{\trans}&\left(\BG_2(-\lambdared_i,-\eta_k)-\BGtwored(-\lambdared_i,-\eta_k)\right)(\Bbred_i\otimes\Bbred_k) \\
        =&~\Bmred_{i,k}^{\trans}\bigg(\BG_2(-\lambdared_i,-\lambdared_k)+(\underbrace{-\eta_k-\lambdared_k}_{=-\varepsilon e^{\imunit \theta}})\frac{\partial}{\partial s_2}\BG_2(-\lambdared_i,-\lambdared_k) + O\left(\varepsilon^2\right)\bigg)(\Bbred_i\otimes\Bbred_k)\\
        -&~\Bmred_{i,k}^{\trans}\bigg(\BGtwored(-\lambdared_i,-\lambdared_k)+(\underbrace{-\eta_k-\lambdared_k}_{=-\varepsilon e^{\imunit \theta}})\frac{\partial}{\partial s_2}\BGtwored(-\lambdared_i,-\lambdared_k) + O\left(\varepsilon^2\right)\bigg)(\Bbred_i\otimes\Bbred_k)\\
        =&~\varepsilon e^{\imunit\theta}\Bmred_{i,k}^{\trans}\left(\frac{\partial}{\partial s_2}\BGtwored(-\lambdared_i,-\lambdared_k) -\frac{\partial}{\partial s_2}\BG_2(-\lambdared_i,-\lambdared_k) \right)(\Bbred_i\otimes\Bbred_k) + O\left(\varepsilon^2\right),
    \end{align*}
    \allowdisplaybreaks
    because $(\BG_2(-\lambdared_i,-\lambdared_k)-\BGtwored(-\lambdared_i,-\lambdared_k))(\Bbred_i\otimes\Bbred_k)=\Bzero_p$
    by~\cref{eq:H2RightCon}.
    Similarly, expanding $\BG_2(s_1,-\lambdared_j)-\BGtwored(s_1,-\lambdared_j)$ in $s_1$ about $-\lambdared_k$ and evaluating at $s_1=\eta_k$ gives
    \begin{align*}
        \Bmred_{k,j}^{\trans}&\left(\BG_2(-\eta_k,-\lambdared_j)-\BGtwored(-\eta_k,-\lambdared_j)\right)(\Bbred_k\otimes\Bbred_j) \\
        =&~\varepsilon e^{\imunit\theta}\Bmred_{k,j}^{\trans}\left(\frac{\partial}{\partial s_1}\BGtwored(-\lambdared_k,-\lambdared_j) -\frac{\partial}{\partial s_1}\BG_2(-\lambdared_k,-\lambdared_j) \right)(\Bbred_k\otimes\Bbred_j) + O\left(\varepsilon^2\right).
    \end{align*}
    To finalize~\cref{eq:pertPole_G2ip}, expand $\BG_2(s_1,-\eta_k)$ in $s_1$ about $-\lambdared_k$ and evaluate $s_1=-\eta_k$:
    \begin{align*}
        \BG_2(-\eta_k, -\eta_k)=\BG_2(-\lambdared_k,-\eta_k) - \varepsilon e^{\imunit \theta}\frac{\partial}{\partial s_1}\BG_2(-\lambdared_k,-\eta_k)+O\left(\varepsilon^2\right).
    \end{align*}
    Then, express $\BG_2(-\lambdared_k,-\eta_k)$ as a series expansion of $\BG_2(-\lambdared_k,s_2)$ in $s_2$ about $-\lambdared_k$, evaluated at $s_2=-\eta_k$:
    \begin{align*}
        \BG_2(-\lambdared_k, -\eta_k)=\BG_2(-\lambdared_k,-\lambdared_k) - \varepsilon e^{\imunit \theta}\frac{\partial}{\partial s_2}\BG_2(-\lambdared_k,-\lambdared_k)+O\left(\varepsilon^2\right).
    \end{align*}
    Because $\BG_2(s_1,s_2)$ is analytic in each argument, its partial derivative $\frac{\partial}{\partial s_1}\BG_2(-\lambdared_k,s_2)$ is analytic in $s_2$, and may also be expressed as a power series about $-\lambdared_k$. Expand about this point and evaluate at $s_2=-\eta_k$:
    \begin{align*}
        \frac{\partial}{\partial s_1}\BG_2(-\lambdared_k,-\eta_k)=\frac{\partial}{\partial s_1}\BG_2(-\lambdared_k,-\lambdared_k) - \varepsilon e^{\imunit\theta}\frac{\partial}{\partial s_2}\frac{\partial}{\partial s_1}\BG_2(-\lambdared_k,-\lambdared_k) + O\left(\varepsilon^2\right).
    \end{align*}
    Putting this all together, we have
    \begin{align*}
        \BG_2(-\eta_k,-\eta_k)&=\BG_2(-\lambdared_k,-\lambdared_k) - \varepsilon e^{\imunit\theta}\left(\frac{\partial}{\partial s_1}\BG_2(-\lambdared_k,-\lambdared_k) + \frac{\partial}{\partial s_2}\BG_2(-\lambdared_k,-\lambdared_k)\right)\\
        &~~~~~~~+ O\left(\varepsilon^2\right).
    \end{align*}
    Applying the exact same logic to the $\BGtwored(-\eta_k,-\eta_k)$ term, we have
    \begin{align*}
        \BGtwored(-\eta_k,-\eta_k)&=\BGtwored(-\lambdared_k,-\lambdared_k) - \varepsilon e^{\imunit\theta}\left(\frac{\partial}{\partial s_1}\BGtwored(-\lambdared_k,-\lambdared_k) + \frac{\partial}{\partial s_2}\BGtwored(-\lambdared_k,-\lambdared_k)\right)\\
        &~~~~~~~+ O\left(\varepsilon^2\right).
    \end{align*}
    Combining these calculations, we have
    \begin{align*}
        \Bmred_{k,k}^{\trans}&\left(\BG_2(-\eta_k,-\eta_k)-\BGtwored(-\eta_k,-\eta_k)\right)(\Bbred_k\otimes\Bbred_k) \\
        =&~\varepsilon e^{\imunit\theta}\Bmred_{k,k}^{\trans}\left(\frac{\partial}{\partial s_1}\BGtwored(-\lambdared_k,-\lambdared_k) -\frac{\partial}{\partial s_1}\BG_2(-\lambdared_k,-\lambdared_k) \right)(\Bbred_k\otimes\Bbred_k) \\
        +&~\varepsilon e^{\imunit\theta}\Bmred_{k,k}^{\trans}\left(\frac{\partial}{\partial s_2}\BGtwored(-\lambdared_k,-\lambdared_k) -\frac{\partial}{\partial s_2}\BG_2(-\lambdared_k,-\lambdared_k) \right)(\Bbred_k\otimes\Bbred_k)+ O\left(\varepsilon^2\right),
    \end{align*}
    and so the expression for inner product~\cref{eq:pertPole_G2ip} ultimately simplifies to 
    \begin{align}
    \begin{split}
    \nonumber
        \langle\BG_2-\BGtwored, \BGtwored-\BGtwohat\rangle_{\CH_2}=&-\sum_{i\neq k}^r\Bmred_{i,k}^{\trans}\bigg(\BG_2(-\lambdared_i,-\eta_k)-\BGtwored(-\lambdared_i,-\eta_k)\bigg)(\Bbred_i\otimes\Bbred_k)\\
        &-\sum_{j\neq k}^r\Bmred_{k,j}^{\trans}\bigg(\BG_2(-\eta_k,-\lambdared_j)-\BGtwored(-\eta_k,-\lambdared_j)\bigg)(\Bbred_k\otimes\Bbred_j)\\
        &-\Bmred_{k,k}^{\trans}\bigg(\BG_2(-\eta_k,-\eta_k)-\BGtwored(-\eta_k,-\eta_k)\bigg)(\Bbred_k\otimes\Bbred_k)
    \end{split}\\
    \begin{split}
    \label{eq:pertPole_G2ip_simp}
        =-\varepsilon e^{\imunit\theta}\bigg[\sum_{\ell=1}^r&\Bmred_{k,\ell}^{\trans}\left(\frac{\partial}{\partial s_1}\BGtwored(-\lambdared_k,-\lambdared_\ell) -\frac{\partial}{\partial s_1}\BG_2(-\lambdared_k,-\lambdared_\ell) \right)(\Bbred_k\otimes\Bbred_\ell)\\
        +\sum_{\ell = 1}^r \Bmred_{\ell,k}^{\trans}\left(\frac{\partial}{\partial s_2}\right.&\left.\BGtwored(-\lambdared_\ell,-\lambdared_k) -\frac{\partial}{\partial s_2}\BG_2(-\lambdared_\ell,-\lambdared_k) \right)(\Bbred_\ell\otimes\Bbred_k) \bigg]+O\left(\varepsilon^2\right).
    \end{split}
    \end{align}
    \end{subequations}
    Note that in passing from the first to the second equality, we have relabeled the sums over $i$ and $j$ to run over $\ell$ to agree with the claim~\cref{eq:H1H2MixedHermiteCon}, and grouped the $(k,k)$-th terms into each of these sums.
    What remains is to deal with the norms in~\cref{eq:inequalityStar} for this case.
    Similar to the previous arguments, we ultimately show that $\|\BGonered-\BGonehat\|_{\CH_2}^2$ and $\|\BGtwored-\BGtwohat\|_{\CH_2}^2$ are $O\left(\varepsilon^2\right)$ terms.
    The calculations required to resolve these terms as $O\left(\varepsilon^2\right)$ are direct, but tedious and not particularly illuminating. Therefore, we refer to~\cite[Appendix~A]{Rei25} for the full calculations.
    Using the fact that these norms are $O\left(\varepsilon^2\right)$,
    substituting the calculations for the inner products~\cref{eq:pertPole_G1ip},~\cref{eq:pertPole_G2ip_simp} together with the definition of $\theta$ in~\cref{eq:pertPole_theta},  inequality~\cref{eq:inequalityStar} simplifies to
    \begin{align*}
        0&\leq -\varepsilon\left|\Bcred_k^{\trans}\left(\frac{d}{ds}\BG_1(-\lambdared_k)-\frac{d}{ds}\BGonered(-\lambdared_k)\right)\Bbred_k\right.\\
        &~~~~~~~~~+\sum_{\ell=1}^r\Bmred_{k,\ell}^{\trans} \left(\frac{\partial}{\partial s_1}\BG_2(-\lambdared_k,-\lambdared_\ell)-\frac{\partial}{\partial s_1}\BGtwored(-\lambdared_k,-\lambdared_\ell)\right)(\Bbred_k\otimes \Bbred_\ell) \\
        &~~~~~~~~~+\left.\sum_{\ell=1}^r\Bmred_{\ell,k}^{\trans} \left(\frac{\partial}{\partial s_2}\BG_2(-\lambdared_\ell,-\lambdared_k)-\frac{\partial}{\partial s_2}\BGtwored(-\lambdared_j,-\lambdared_k)\right)( \Bbred_\ell\otimes\Bbred_k)\right| + O\left(\varepsilon^2\right).
    \end{align*}
    By the same logic used to prove~\cref{eq:H1H2MixedLeftCon}, this inequality yields a contradiction for small values of $\varepsilon>0$,
    and thus the interpolation conditions in~\cref{eq:H1H2MixedHermiteCon} must hold.

\section*{Acknowledgments}
The authors thank Mark Embree and Steffen W.~R. Werner for helpful discussions during the preparation of this manuscript.

\bibliographystyle{siamplain}
\bibliography{root}
\end{document}